\newcommand{\res}{\!\!\mathop{\hbox{
                                \vrule height 7pt width .5pt depth 0pt
                                \vrule height .5pt width 6pt depth 0pt}}
                                \nolimits}
\def\z{{\bf z}}
\newtheorem{theorem}{Theorem}[section]
\newtheorem{lemma}[theorem]{Lemma}
\newtheorem{definition}[theorem]{Definition}
\newtheorem{proposition}[theorem]{Proposition}
\newtheorem{corollary}[theorem]{Corollary}
\newtheorem{remark}[theorem]{Remark}
\newtheorem{example}[theorem]{Example}
\newtheorem*{theorem*}{\it Theorem}
\def\vint_#1{\mathchoice%
          {\mathop{\kern 0.2em\vrule width 0.6em height 0.69678ex depth -0.58065ex
                  \kern -0.8em \intop}\nolimits_{\kern -0.4em#1}}%
          {\mathop{\kern 0.1em\vrule width 0.5em height 0.69678ex depth -0.60387ex
                  \kern -0.6em \intop}\nolimits_{#1}}%
          {\mathop{\kern 0.1em\vrule width 0.5em height 0.69678ex
              depth -0.60387ex
                  \kern -0.6em \intop}\nolimits_{#1}}%
          {\mathop{\kern 0.1em\vrule width 0.5em height 0.69678ex depth -0.60387ex
                  \kern -0.6em \intop}\nolimits_{#1}}}
\def\vintslides_#1{\mathchoice%
          {\mathop{\kern 0.1em\vrule width 0.5em height 0.697ex depth -0.581ex
                  \kern -0.6em \intop}\nolimits_{\kern -0.4em#1}}%
          {\mathop{\kern 0.1em\vrule width 0.3em height 0.697ex depth -0.604ex
                  \kern -0.4em \intop}\nolimits_{#1}}%
          {\mathop{\kern 0.1em\vrule width 0.3em height 0.697ex depth -0.604ex
                  \kern -0.4em \intop}\nolimits_{#1}}%
          {\mathop{\kern 0.1em\vrule width 0.3em height 0.697ex depth -0.604ex
                  \kern -0.4em \intop}\nolimits_{#1}}}
\def\R{\mathbb R}
\def\N{\mathbb N}
\def\g{\hbox{\bf g}}
\numberwithin{equation}{section}
\def\NN{{\mathbb{N}}}
\def\1{\raisebox{2pt}{\rm{$\chi$}}}
\def\g{{\bf g}}
\definecolor{violet(ryb)}{rgb}{0.53, 0.0, 0.69}
\begin{document}  

\title[Torsional rigidity in random walk spaces]{\bf Torsional rigidity in random walk spaces}

\author[J. M. Maz\'{o}n and J. Toledo]{J. M. Maz\'{o}n    and J. Toledo     }

\address{Departamento de An\'{a}lisis Matem\'atico,
Universitat de Val\`encia, Valencia, Spain.\newline
  \indent {\tt mazon@uv.es, toledojj@uv.es }
  }

\keywords{Torsion rigidity, random walks,   weighted graphs,  Saint-Venant inequality, Faber-Krahn inequality. \\
\indent 2010 {\it Mathematics Subject Classification:}
35K55, 47H06, 47J35.}

\date{}

\begin{abstract} In this paper we study the (nonlocal) torsional rigidity in the ambient space of random walk spaces.   We get  the relation of the (nonlocal) torsional rigidity of a set $\Omega$ with the spectral $m$-heat content of $\Omega$,  what gives rise to  a complete description of  the nonlocal torsional rigidity of $\Omega$ by using uniquely  probability terms involving the set $\Omega$; and  recover the  first  eigenvalue of the nonlocal Laplacian with homogeneous Dirichlet boundary conditions  by a limit formula using these probability term. For the random walk in $\R^N$ associated with a non singular kernel,   we get a nonlocal version  of the Saint-Venant inequality, and, under rescaling we recover the classical Saint-Venant inequality. We study the nonlocal $p$-torsional rigidity and its relation with the nonlocal Cheeger constants.   We also get a nonlocal version of the P\'{o}lya-Makai-type inequalities.  We relate the torsional rigidity  given here for weighted graphs with the torsional rigidity on metric graphs.

\end{abstract}

\maketitle

{ \renewcommand\contentsname{Contents}
\setcounter{tocdepth}{3}
}

\addtolength{\parskip}{0.2cm}

\section{Introduction}

In this paper we study the (nonlocal) torsional rigidity in the ambient space of random walk spaces.  Important examples of these spaces are
locally finite weighted graphs, finite Markov chains and nonlocal operators on domains in  $\R^N$ where the jumps are driven by a non-negative  integrable and radially symmetric kernel (see~\cite{MST0} and \cite{MSTBook}).

In the classical local setting, the torsional rigidity of a Lebesgue subset of $\mathbb{R}^N$ has been, and  is nowadays, a source of interesting problems.
Let us consider an isotropic elastic cylindrical beam in $\mathbb{R}^3$ with cross-section, perpendicular to the $z$-axis, is an an open
bounded domain $D \subset \R^2$.   The  {\it torsion rigidity  problem} (see e.g. \cite {Sokolnikoff}) is to find the shape of the cross section $D$ which provides the greatest {\it torsional rigidity}, under an area constraint, when a  torque is applied around the $z$-axis . It was  conjectured by   A. Saint-Venant    in 1856 that the simply connected cross-section with maximal torsional rigidity is the circle and it was proved by G. P\'{o}lya in 1948. The distribution of stress generated in the beam due to  the applied
torque is determined by the {\it stress function}  $u_D$,  the unique positive weak solution of the Dirichlet problem
\begin{equation}\label{DPTorsion}
\left\{ \begin{array}{ll} - \Delta u_D = 1  \quad \hbox{in}  \ D \\[10pt]
u_D= 0  \quad \hbox{on} \ \partial D. \end{array} \right.
\end{equation}
Notice that the function $u_D$ is also the unique minimizer of the {\it torsional energy}
 $$E(D) = \min_{v\in W^{1,2}_0(D)}  \frac12 \int_D |\nabla v|^2 dx - \int_D v dx.$$
The total resultant torque due to this stress function is called {\it torsional rigidity} and is
expressed as
$$T(D):= \int_\Omega u_D(x) dx$$
or equivalently (see~\cite{P1} or \cite{Bandle})
\begin{equation}\label{var1}T(D) = \displaystyle\max_{v\in W^{1,2}_0(D)\setminus\{0\}}\frac{\displaystyle \left(\int_D v dx \right)^2}{\displaystyle\int_D |\nabla v|^2 dx}.
\end{equation}

Throughout this paper, we adopt the following notation. If $D$ is open in $\R^N$ with $0 < \vert D \vert < \infty$ then $D^*$ is the ball in $\R^N$ centered at the origin with $\vert D^* \vert = \vert D \vert$. Furthermore $B_R$ is a ball with radius $R$. We put $\omega_N = \vert B_1 \vert$.

The {\it Saint-Venant inequality} reads, for $D$ a bounded domain, as follows:
$$T(D) \leq T(D^*).$$
This inequality was established by G. P\'{o}lya \cite{P1} using symmetrization methods (see also E. Makai \cite{M}).

 On the other and,
the {\it Faber-Krahn inequality} establishes that
$$\lambda_1(D^*) \leq \lambda_1(D),$$
where $\lambda_1(D)$ is the lowest $\lambda$ for which the eigenvalue problem
\begin{equation}\label{Eigen}
\left\{ \begin{array}{ll} - \Delta u = \lambda u, \quad \hbox{in}  \ D \\[10pt]
u = 0, \quad \hbox{on} \ \partial D. \end{array} \right.
\end{equation}
admits a non trivial solution. The first proof of the Faber-Krahn inequality was given by P\'{o}lya and Szeg\"o in \cite{PS1}  based in spherically symmetric decreasing rearrangement.

Since $\lambda_1(D)$ is the minimizer  of the Rayleigh quotient
$$\lambda_1(D) = \min_{v\in W^{1,2}_0(D)\setminus\{0\}}\frac{\displaystyle \int_D |\nabla v|^2 dx}{\displaystyle\int_D v^2 dx },$$
is easy to see (see, for example, \cite{BBV}) that
\begin{equation}\label{lambdatorsionlocal1}\lambda_1(D)\le\frac{|D|}{T(D)}.
\end{equation}

Let $D$ be an open bounded domain $D \subset \R^N$. The {\it spectral heat content} of $D$ is given by $$\mathbb{Q}_D(t):= \int_D   v_D(x, t)  dx$$
where $v_D$ is the solution of Dirichlet problem
$$\left\{ \begin{array}{lll}   \displaystyle\frac{v_D(x, t)}{\partial t}(x, t)  = \Delta  v_D(x, t)(x,t), \quad &\hbox{if} \ (x,t) \in D \times [0, \infty), \\[10pt]   v_D(x, t) (x,t) =0, \quad &\hbox{if} \ (x,t) \in \partial D \times (0, \infty), \\[10pt]  v_D(x, t)(x,0) = \1_D(x), \quad &\hbox{if} \ x\in D.  \end{array}\right. $$
$\mathbb{Q}_D(t)$ represents the amount of heat contained in $D$ at time $t$ when $D$ has initial temperature $1$ and when the boundary of $D$ is keps at temperature $0$ for all $t >0$.

  The functions $u_D$ and $v_D$ have a probabilistic interpretation (see for instance \cite{BBC}). For this, let $(B(s), s \geq 0, \mathbb{P}_x, x \in\R^N)$ be a brownian motion associated to the Laplacian on $\R^N$,  and let $\tau$ be the first exit time from $D$:
$$\tau = \inf \{ s \geq 0 \ : \ B(s) \not\in D \}.$$
Then
\begin{equation}\label{exp1}u_D(x) = \mathbb{E}_x[\tau], \quad x \in D,
\end{equation}
where $\mathbb{E}_x$ denotes expectation with respect to $ \mathbb{P}_x$, and
\begin{equation}\label{exp2}v_D(x,t) = \mathbb{P}_x[\tau >t], \quad x \in D, \ t>0.\end{equation}
For $j \in \N$ the {\it sequence of exit-moments} of $D$ is defined as
$$EM_j(D):= \int_D \mathbb{E}_x[\tau^j] \, dx.$$
Notice that, by \eqref{exp1},
\begin{equation}\label{exp3}T(D) = EM_1(D).
\end{equation}
Using \eqref{exp2}, we can express moments of the exit time in term of $v_D$ as
\begin{equation}\label{exp4}
\mathbb{E}_x[\tau^j] = j \int_0^\infty t^{j-1} v_D(x, t) dt.
\end{equation}
Integrating in \eqref{exp4} and using Fubini's Theorem, we see that  the sequence of exit-moments can be expressed as moments of the heat content:
\begin{equation}\label{exp5}
EM_j(D) = j \int_0^\infty t^{j-1} \mathbb{Q}_D(t) dt.
\end{equation}
In particular, by \eqref{exp3}, we have
\begin{equation}\label{exp6}
T(D) = \int_0^\infty  \mathbb{Q}_D(t) dt.
\end{equation}

Our aim is  to study the torsional rigidity  in the general framework of the random walk spaces. We get the nonlocal versions of the previous local results \eqref{var1}, \eqref{lambdatorsionlocal1}, \eqref{exp3} and \eqref{exp6}.
 In particular we give the precise characterization of the nonlocal torsional rigidity of a set, and of the all nonlocal exit moments, by using uniquely  probability terms involving the set, see~\eqref{specheatcont01t02} and~\eqref{Nspecheatcont01t02mom}, and  recover the    first  eigenvalue of the nonlocal Laplacian with homogeneous Dirichlet boundary conditions, when exists, by a limit formula using such terms,  see~\eqref{pueq010}.
For the random walk in $\R^N$ associated with a non singular kernel,   we get a nonlocal version  of the Saint-Venant inequality, and, under rescaling we recover the classical Saint-Venant inequality.
We also get  the variational characterization of the nonlocal $p$-torsional rigidity. We  relate  the nonlocal $p$-torsional rigidity of a set with its $1$-Cheeger  and $p$-Cheeger constants in~\eqref{ChegeerT1}, and as a consequence we prove   that the nonlocal $1$-Cheeger constant of a set is the limit, as $p\to 1^+$,  of the inverse of its  nonlocal $p$-torsional rigidities, see~\eqref{ChegeerT2}. See also~\eqref{ChegeerT2dd01} for another limit attaining the nonlocal $1$-Cheeger constant by means of nonlocal Poincar\'{e} constants.  We also obtain  a nonlocal version of P\'{o}lya-Makai-type inequalities.
To the best of our knowledge  most of the results we get are new even for the   particular cases of locally finite weighted graphs and nonlocal problems in domains of $\mathbb{R}^N$.
 Finally we relate the torsional rigidity  given here for graphs with the torsional rigidity on metric graphs stated in~\cite{MP}.

\section{Preliminaries}

 \subsection{Random walk spaces}\label{RWS1} We recall some concepts and results about random walk spaces  given in \cite{MST0},  \cite{MST2} and \cite{MSTBook}.

 Let $(X,\mathcal{B})$ be a measurable space such that the $\sigma$-field $\mathcal{B}$ is countably generated.
A random walk $m$
on $(X,\mathcal{B})$ is a family of probability measures $(m_x)_{x\in X}$
on $\mathcal{B}$ such that $x\mapsto m_x(B)$ is a measurable function on $X$ for each fixed $B\in\mathcal{B}$.

The notation and terminology chosen in this definition comes from Ollivier's paper \cite{O}. As noted in that paper, geometers may think of $m_x$ as a replacement for the notion of balls around $x$, while in probabilistic terms we can rather think of these probability measures as defining a Markov chain whose transition probability from $x$ to $y$ in $n$ steps is
\begin{equation}
\displaystyle
dm_x^{*n}(y):= \int_{z \in X}  dm_z(y)dm_x^{*(n-1)}(z), \ \ n\ge 1
\end{equation}
and $m_x^{*0} = \delta_x$, the dirac measure at $x$.

\begin{definition}\label{convolutionofameasure}{\rm
If $m$ is a random walk on $(X,\mathcal{B})$ and $\mu$ is a $\sigma$-finite measure on $X$. The convolution of $\mu$ with $m$ on $X$ is the measure defined as follows:
$$\mu \ast m (A) := \int_X m_x(A)d\mu(x)\ \ \forall A\in\mathcal{B},$$
which is the image of $\mu$ by the random walk $m$.}
\end{definition}

\begin{definition}\label{def.invariant.measure} {\rm If $m$ is a random walk on $(X,\mathcal{B})$,
a $\sigma$-finite measure $\nu$ on $X$ is {\it invariant}
with respect to the random walk $m$ if
 $$\nu\ast m = \nu.$$

The measure $\nu$ is said to be {\it reversible} if moreover, the detailed balance condition $$dm_x(y)d\nu(x)  = dm_y(x)d\nu(y) $$ holds true.}
\end{definition}

\begin{definition}\label{DefMRWSf}{\rm
Let $(X,\mathcal{B})$ be a measurable space where the $\sigma$-field $\mathcal{B}$ is countably generated. Let $m$ be a random walk on $(X,\mathcal{B})$ and $\nu$ an invariant measure with respect to $m$. The measurable space together with $m$ and $\nu$ is then called a random walk space
and is denoted by $[X,\mathcal{B},m,\nu]$.}
\end{definition}

 If $(X,d)$ is a Polish metric space (separable completely metrizable topological space), $\mathcal{B}$ is its Borel $\sigma$-algebra and $\nu$ is a Radon measure (i.e. $\nu$ is inner regular
and locally finite), then we denote  $[X,\mathcal{B},m,\nu]$ as $[X,d,m,\nu]$,   and call it a metric random walk space.

\begin{definition}\label{def.m.connected.random.walk.space}{\rm
Let $[X,\mathcal{B},m,\nu]$ be a random walk space. We say that $[X,\mathcal{B},m,\nu]$ is $m$-connected
if, for every $D\in \mathcal{B}$ with $\nu(D)>0$ and $\nu$-a.e. $x\in X$,
$$\sum_{n=1}^{\infty}m_x^{\ast n}(D)>0.$$
}
\end{definition}

 \begin{definition}\label{def.m.interaction}{\rm
Let  $[X,\mathcal{B},m,\nu]$ be a random walk space and let $A$, $B\in\mathcal{B}$. We define the {\it $m$-interaction} between $A$ and $B$ as
\begin{equation}\label{m.interaction} L_m(A,B):= \int_A \int_B dm_x(y) d\nu(x)=\int_A m_x(B) d\nu(x).
 \end{equation}
 }
 \end{definition}

  The following result gives a characterization of $m$-connectedness in terms of the $m$-interaction between sets.

\begin{proposition}\label{connectedness.iff.Lm}(\cite[Proposition 2.11]{MST0}, \cite[Proposition 1.34]{MSTBook})
 Let $[X,\mathcal{B},m,\nu]$ be a random walk space.
The following statements are equivalent:
\item{ (i) } $[X,\mathcal{B},m,\nu]$ is $m$-connected.
\item {(ii)} If $ A,B\in\mathcal{B}$ satisfy $A\cup B=X$ and $L_m(A,B)= 0$, then either $\nu(A)=0$ or $\nu(B)=0$.
\item {(iii)} If $A\in \mathcal{B}$ is a $\nu$-invariant set then either $\nu(A)=0$ or $\nu(X\setminus A)=0$.
\end{proposition}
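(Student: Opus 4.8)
The plan is to prove the two equivalences (ii)$\Leftrightarrow$(iii) (essentially set bookkeeping) and (i)$\Leftrightarrow$(iii) (where the content sits), which together give the whole statement. I recall that $A\in\mathcal B$ is a $\nu$-invariant set precisely when $L_m(A,X\setminus A)=0$, i.e. $m_x(X\setminus A)=0$ for $\nu$-a.e. $x\in A$; since $\nu$ is invariant one also has the symmetry $L_m(A,X\setminus A)=L_m(X\setminus A,A)$. Two elementary consequences of $\nu\ast m=\nu$ will be used throughout: (a) iterating gives $\nu\ast m^{\ast n}=\nu$, and since $\mu\mapsto\mu\ast m$ is monotone, the measure $\sigma_{A,n}(B):=\int_A m_x^{\ast n}(B)\,d\nu(x)$ satisfies $\sigma_{A,n}\le\nu$, hence $\sigma_{A,n}\ll\nu$; (b) $\nu(B)=\int_X m_x(B)\,d\nu(x)$, so $\nu(B)>0$ forces $\nu(\{x:m_x(B)>0\})>0$, while $\nu(B)=0$ forces $m_x(B)=0$ for $\nu$-a.e. $x$.

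I would prove (ii)$\Rightarrow$(iii) by noting that for $\nu$-invariant $A$ the pair $(A,X\setminus A)$ satisfies the hypotheses of (ii), which yields $\nu(A)=0$ or $\nu(X\setminus A)=0$. For (iii)$\Rightarrow$(ii): given $A\cup B=X$ with $L_m(A,B)=0$, the set $C:=X\setminus B\subseteq A$ has $L_m(C,X\setminus C)=L_m(C,B)\le L_m(A,B)=0$, so $C$ is $\nu$-invariant and (iii) gives $\nu(C)=0$ or $\nu(B)=\nu(X\setminus C)=0$; the latter is a conclusion of (ii), while in the former $\nu(X\setminus B)=0$, so by (b) $m_x(B)=1$ for $\nu$-a.e. $x$, whereas $L_m(A,B)=0$ forces $m_x(B)=0$ for $\nu$-a.e. $x\in A$, and these can coexist only if $\nu(A)=0$.

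For (i)$\Leftrightarrow$(iii) I would associate to each $D\in\mathcal B$ the escape set $D^\infty:=\{x\in X:\sum_{n\ge1}m_x^{\ast n}(D)=0\}$, which is measurable because each $x\mapsto m_x^{\ast n}(D)$ is (induction on $n$ from the convolution formula). First, $D^\infty$ is always $\nu$-invariant: if some $x\in D^\infty$ had $m_x(X\setminus D^\infty)>0$ then, since $X\setminus D^\infty=\bigcup_{n\ge1}\{y:m_y^{\ast n}(D)>0\}$, there would be an $n$ with $m_x(\{y:m_y^{\ast n}(D)>0\})>0$, whence $m_x^{\ast(n+1)}(D)\ge\int_{\{m_\cdot^{\ast n}(D)>0\}}m_y^{\ast n}(D)\,dm_x(y)>0$, contradicting $x\in D^\infty$. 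Second, by (b), $\nu(D)>0$ implies $\nu(X\setminus D^\infty)\ge\nu(\{x:m_x(D)>0\})>0$. Granting these, (iii)$\Rightarrow$(i) is immediate: for $\nu(D)>0$, applying (iii) to the $\nu$-invariant set $D^\infty$ leaves only $\nu(D^\infty)=0$, which is exactly $m$-connectedness. And (i)$\Rightarrow$(iii) reduces, for a $\nu$-invariant $A$ with $\nu(X\setminus A)>0$ (otherwise there is nothing to prove), to showing $\sum_{n\ge1}m_x^{\ast n}(X\setminus A)=0$ for $\nu$-a.e. $x\in A$; for then $\nu(A)\le\nu\big((X\setminus A)^\infty\big)$, which vanishes by (i) applied to $D=X\setminus A$, forcing $\nu(A)=0$.

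This last claim is the step I expect to be the main obstacle: the naive induction ``$m_x(X\setminus A)=0$ $\nu$-a.e. on $A$ $\Rightarrow$ $m_x^{\ast n}(X\setminus A)=0$ $\nu$-a.e. on $A$'' fails because a $\nu$-a.e. statement need not pass through the integral $\int_X(\,\cdot\,)\,dm_x$ when $m_x\not\ll\nu$ (think $m_x=\delta_x$). The remedy is to argue at the level of the measures $\sigma_{A,n}$ from (a): set $c_n:=\sigma_{A,n}(X\setminus A)=\int_A m_x^{\ast n}(X\setminus A)\,d\nu(x)$ and prove $c_n=0$ by induction. One has $c_1=L_m(A,X\setminus A)=0$; and using $m_x^{\ast(n+1)}(X\setminus A)=\int_X m_z(X\setminus A)\,dm_x^{\ast n}(z)$ together with Fubini, $c_{n+1}=\int_X m_z(X\setminus A)\,d\sigma_{A,n}(z)$, where the part over $X\setminus A$ vanishes because $\sigma_{A,n}(X\setminus A)=c_n=0$, and the part over $A$ vanishes because $\sigma_{A,n}\ll\nu$ and $m_z(X\setminus A)=0$ for $\nu$-, hence $\sigma_{A,n}$-, a.e. $z\in A$; thus $c_{n+1}=0$, and summing in $n$ by monotone convergence gives the claim. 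The remaining points — measurability of $x\mapsto m_x^{\ast n}(D)$, monotonicity of $\mu\mapsto\mu\ast m$, the symmetry of $L_m$ on complementary sets, and the coexistence bookkeeping above — are routine.
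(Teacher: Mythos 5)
The paper does not actually prove this proposition: it is quoted from \cite{MST0} and \cite{MSTBook}, so there is no in-text argument to compare yours against; judged on its own, your proof is correct and complete. The bookkeeping equivalence (ii)$\Leftrightarrow$(iii) is carried out properly (in (iii)$\Rightarrow$(ii) the reduction to $C=X\setminus B$ and the coexistence argument via $\nu(X\setminus B)=0\Rightarrow m_x(B)=1$ $\nu$-a.e.\ are exactly what is needed), and the substance of (i)$\Leftrightarrow$(iii) is handled with the right tools: the escape set $D^\infty$ is invariant pointwise by the semigroup identity, $\nu(D)>0$ forces $\nu(X\setminus D^\infty)>0$ by invariance of $\nu$, and, most importantly, you correctly spot that the naive pointwise induction ``$m_x(X\setminus A)=0$ a.e.\ on $A$ $\Rightarrow$ $m_x^{\ast n}(X\setminus A)=0$ a.e.\ on $A$'' is illegitimate when $m_x\not\ll\nu$, and you repair it at the level of the measures $\sigma_{A,n}=(\nu\res A)\ast m^{\ast n}\le\nu\ast m^{\ast n}=\nu$, running the induction on $c_n=\sigma_{A,n}(X\setminus A)$; that is precisely the measure-theoretic argument the statement requires. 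Two small caveats. First, your side remark that invariance of $\nu$ alone yields $L_m(A,X\setminus A)=L_m(X\setminus A,A)$ is false in general when both $A$ and $X\setminus A$ have infinite measure: take $X=\mathbb{Z}$ with counting measure and the deterministic shift $m_x=\delta_{x+1}$, $A=\mathbb{N}$, where $L_m(A,X\setminus A)=0$ but $L_m(X\setminus A,A)=1$; the identity needs reversibility or finiteness of one of the two measures. Since you never use it, nothing breaks, but it should be deleted or qualified. Second, you use the Chapman--Kolmogorov identity in both orders, $m_x^{\ast(n+1)}(D)=\int_X m_y^{\ast n}(D)\,dm_x(y)$ and $m_x^{\ast(n+1)}(D)=\int_X m_y(D)\,dm_x^{\ast n}(y)$, while only the second is the paper's definition; their agreement is a routine Tonelli induction, but a written-up version should say so, and should also state explicitly the definition of a $\nu$-invariant set ($L_m(A,X\setminus A)=0$, as in the cited sources), since the present paper never records it and the equivalence is sensitive to that choice.
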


\begin{definition}\label{defomegaconnected}
Let $[X,\mathcal{B},m,\nu]$ be a reversible random walk space, and let $\Omega\in\mathcal{B}$ with $\nu(\Omega)>0$. We denote by  $\mathcal{B}_\Omega$ to the following $\sigma$-algebra
 $$\mathcal{B}_\Omega:=\{B\in\mathcal{B} \, : \, B\subset \Omega\}.$$

We say that $\Omega$ is {\it $m$-connected
(with respect to $\nu$)} if $L_m(A,B)>0$ for every pair of non-$\nu$-null   sets $A$, $B\in \mathcal{B}_\Omega$ such that $A\cup B=\Omega$.
\end{definition}

Let us see now some examples of random walk spaces.

 \begin{example}\label{example.nonlocalJ} \rm
Consider the metric measure space $(\R^N, d, \mathcal{L}^N)$, where $d$ is the Euclidean distance and $\mathcal{L}^N$ the Lebesgue measure on $\R^N$ (which we will also denote by $|.|$). For simplicity, we will write $dx$ instead of $d\mathcal{L}^N(x)$. Let  $J:\R^N\to[0,+\infty[$ be a measurable, nonnegative and radially symmetric
function  verifying  $\int_{\R^N}J(x)dx=1$. Let $m^J$ be the following random walk on $(\R^N,d)$:
$$m^J_x(A) :=  \int_A J(x - y) dy \quad \hbox{ for every $x\in \R^N$ and every Borel set } A \subset  \R^N  .$$
Then, applying Fubini's Theorem it is easy to see that the Lebesgue measure $\mathcal{L}^N$ is reversible with respect to $m^J$. Therefore, $[\R^N, d, m^J, \mathcal{L}^N]$ is a reversible metric random walk space.
\end{example}

\begin{example}\label{example.graphs}[Weighted discrete graphs] \rm Consider a locally finite  weighted discrete graph $$G = (V(G), E(G)),$$ where $V(G)$ is the vertex set, $E(G)$ is the edge set and each edge $(x,y) \in E(G)$ (we will write $x\sim y$ if $(x,y) \in E(G)$) has a positive weight $w_{xy} = w_{yx}$ assigned. Suppose further that $w_{xy} = 0$ if $(x,y) \not\in E(G)$.  Note that there may be loops in the graph, that is, we may have $(x,x)\in E(G)$ for some $x\in V(G)$ and, therefore, $w_{xx}>0$. Recall that a graph is locally finite if every vertex is only contained in a finite number of edges.

 A finite sequence $\{ x_k \}_{k=0}^n$  of vertices of the graph is called a {\it  path} if $x_k \sim x_{k+1}$ for all $k = 0, 1, ..., n-1$. The {\it length} of a path $\{ x_k \}_{k=0}^n$ is defined as the number $n$ of edges in the path. With this terminology, $G = (V(G), E(G))$ is said to be {\it connected} if, for any two vertices $x, y \in V$, there is a path connecting $x$ and $y$, that is, a path $\{ x_k \}_{k=0}^n$ such that $x_0 = x$ and $x_n = y$.  Finally, if $G = (V(G), E(G))$ is connected, the {\it graph distance} $d_G(x,y)$ between any two distinct vertices $x, y$ is defined as the minimum of the lengths of the paths connecting $x$ and $y$. Note that this metric is independent of the weights.

For $x \in V(G)$ we define the weight at $x$ as
$$d_x:= \sum_{y\sim x} w_{xy} = \sum_{y\in V(G)} w_{xy},$$
and the neighbourhood of $x$ as $N_G(x) := \{ y \in V(G) \, : \, x\sim y\}$. Note that, by definition of locally finite graph, the sets $N_G(x)$ are finite. When all the weights are $1$, $d_x$ coincides with the degree of the vertex $x$ in a graph, that is,  the number of edges containing $x$.

For each $x \in V(G)$  we define the following probability measure
\begin{equation}\label{discRW}m^G_x:=  \frac{1}{d_x}\sum_{y \sim x} w_{xy}\,\delta_y.\\ \\
\end{equation}
It is not difficult to see that the measure $\nu_G$ defined as
 $$\nu_G(A):= \sum_{x \in A} d_x,  \quad A \subset V(G),$$
is a reversible measure with respect to this random walk. Therefore, $[V(G),\mathcal{B},m^G,\nu_G]$ is a reversible random walk space being $\mathcal{B}$ is the $\sigma$-algebra of all subsets of $V(G)$. Moreover $[V(G),d_G,m^G,\nu_G]$ is a reversible metric random walk space.

\end{example}

\begin{example}\label{example.restriction.to.Omega} \rm Given a random walk  space $[X,\mathcal{B},m,\nu]$ and $\Omega \in \mathcal{B}$ with $\nu(\Omega) > 0$, let
$$m^{\Omega}_x(A):=\int_A d m_x(y)+\left(\int_{X\setminus \Omega}d m_x(y)\right)\delta_x(A) \quad \hbox{ for every } A\in\mathcal{B}_\Omega  \hbox{ and } x\in\Omega.
$$
Then, $m^{\Omega}$ is a random walk on $(\Omega,\mathcal{B}_\Omega)$ and it easy to see that $\nu \res \Omega$ is invariant with respect to $m^{\Omega}$. Therefore,  $[\Omega,\mathcal{B}_\Omega,m^{\Omega},\nu \res \Omega]$ is a random walk space. Moreover, if $\nu$ is reversible with respect to $m$ then $\nu \res \Omega$ is  reversible with respect to $m^{\Omega}$. Of course, if $\nu$ is a probability measure we may normalize $\nu \res \Omega$ to obtain the random walk space
$$\left[\Omega,\mathcal{B}_\Omega,m^{\Omega}, \frac{1}{\nu(\Omega)}\nu \res \Omega \right].$$
 Note that, if $[X,d,m,\nu]$ is a metric random walk space and $\Omega$ is closed, then $[\Omega,d,m^{\Omega},\nu \res \Omega]$ is also a metric random walk space, where we abuse notation and denote by $d$ the restriction of $d$ to $\Omega$.

In particular, in the context of Example \ref{example.nonlocalJ}, if $\Omega$ is a closed and bounded subset of $\R^N$, we obtain the metric random walk space $[\Omega, d, m^{J,\Omega},\mathcal{L}^N\res \Omega]$ where
$m^{J,\Omega} := (m^J)^{\Omega}$; that is,
$$m^{J,\Omega}_x(A):=\int_A J(x-y)dy+\left(\int_{\R^n\setminus \Omega}J(x-z)dz\right)d\delta_x$$ for every Borel set   $A \subset  \Omega$  and $x\in\Omega$.

\end{example}

\subsection{The nonlocal gradient, divergence and Laplace operators}\label{nonlocal.notions.1.section}

Let us introduce the nonlocal counterparts of some classical concepts.

\begin{definition}\label{nonlocalgraddiv}{\rm
Let $[X,\mathcal{B},m,\nu]$ be a random walk space. Given a function $f: X \rightarrow \R$ we define its {\it nonlocal gradient}
$\nabla f: X \times X \rightarrow \R$ as
$$\nabla f (x,y):= f(y) - f(x) \quad \forall \, x,y \in X.$$
Moreover, given $\z : X \times X \rightarrow \R$, its {\it $m$-divergence}
${\rm div}_m \z : X \rightarrow \R$ is defined as
 $$({\rm div}_m \z)(x):= \frac12 \int_{X} (\z(x,y) - \z(y,x)) dm_x(y).$$
 }
\end{definition}

We define the (nonlocal) Laplace operator as follows.
\begin{definition}\label{deflap1310}{\rm
Let $[X,\mathcal{B},m,\nu]$ be a random walk space, we define the {\it $m$-Laplace operator} (or {\it $m$-Laplacian}) from $L^1(X,\nu)$ into itself as $\Delta_m:= M_m - I$, i.e.,
$$\Delta_m f(x)= \int_X f(y) dm_x(y) - f(x) = \int_X (f(y) - f(x)) dm_x(y), \quad x\in X,$$
for $f\in L^1(X,\nu)$.
}
\end{definition}
 Note that
$$\Delta_m f (x) = {\rm div}_m (\nabla f)(x).$$

In the case of the random walk space associated with a locally finite weighted discrete graph $G=(V,E)$ (as defined in Example~\ref{example.graphs}), the $m^G$-Laplace operator coincides with the graph Laplacian (also called the normalized graph Laplacian) studied by many authors (see, for example, \cite{BJ}, \cite{BJL}, \cite{DK}, \cite{Elmoatazetal}, \cite{Hafiene}):
$$\Delta_{m^G} u(x):=\frac{1}{d_x}\sum_{y\sim x}w_{xy}(u(y)-u(x)), \quad u\in L^2(V,\nu_G), \ x\in V .$$

 In \cite{MST2} (see also \cite{MSTBook})  we define and proof the following facts.
 $$BV_m(X):= \left\{ f: X \rightarrow \R \ \hbox{measurable} \ : \ \int_{X \times X} \vert \nabla u(x,y) \vert \,  d(\nu\otimes m_x)(x,y) < \infty \right\},$$
and for $f \in BV_m(X)$ we define its {\it $m$-total variation} as
$$TV_m(f):= \frac12 \int_{X \times X} \vert \nabla u(x,y) \vert \,  d(\nu\otimes m_x)(x,y).$$
For a set $E \in \mathcal{B}$ such that $\1_E \in BV_m(X)$, we define its {\it $m$-perimeter} as
$$P_m(E):= TV_m(\1_E)   = L_m(E, X \setminus E).$$
 If $\nu(E)<+\infty$ then
 \begin{equation}\label{secondf021}\displaystyle P_m(E)=\nu(E) -\int_E\int_E dm_x(y) d\nu(x).
\end{equation}
The following {\it coarea formula} holds:
\begin{equation}\label{coarea}
TV_m(f) = \int_{-\infty}^{+\infty} P_m( \{ x\in X  :   f(x) >t \}) dt,\quad\hbox{for }f \in BV_m(X),
\end{equation}
  Furthermore we give the following nonlocal concept of mean curvature. Let $E \in \mathcal{B}$ with $\nu(E)>0$. For a point $x  \in X$ we define  the {\it $m$-mean curvature of $\partial E$ at $x$} as
\begin{equation}\label{defcurdefdef}H^m_{\partial E}(x):= \int_{X}  (\1_{X \setminus E}(y) - \1_E(y)) dm_x(y).\end{equation}
Observe that
\begin{equation}\label{defcur}H^m_{\partial E}(x) =  1 - 2 \int_E  dm_x(y).\end{equation}
 Having in mind \eqref{secondf021}, we have that,  if $\nu(E)<+\infty$,
$$\int_E H^m_{\partial E}(x) d\nu(x) = \int_E \left( 1 - 2 \int_E  dm_x(y) \right)  d\nu(x) = \nu(E) - 2\int_E\int_E dm_x(y) d\nu(x)$$ $$ = P_m(E) - \int_E\int_E dm_x(y) d\nu(x) = 2P_m(E) -\nu(E).$$ Consequently,
 \begin{equation}\label{1secondf021}\displaystyle \int_E H^m_{\partial E}(x) d\nu(x)=2P_m(E) -\nu(E).
\end{equation} and
\begin{equation}\label{pararm01}\frac{1}{\nu(E)}\int_\Omega H_{\partial E}^m(x)d\nu(x)=2\frac{P_m(E)}{\nu(E)}-1.
 \end{equation}

\subsection{Schwarz’s symmetrization}

Let $E \subset  \R^N$ be a measurable set of finite measure, and let $\1_E$ its characteristic
function. The {\it symmetric rearrangement} of $E$ is the ball $E^*$ centered at zero with $\vert E^* \vert  = \vert E \vert$,
i.e., with radius $\left(\frac{ \vert E \vert }{\omega_N} \right)^{\frac{1}{N}}$, where $\omega_N$  denotes the volume of the $N$-dimensional unit ball.
For a non-negative measurable function $f : \R^N \rightarrow \R$ vanishing at infinity, the {\it Schwarz’s
symmetrization} of $f$ is
$$f^*(x):=  \int_0^\infty \1_{\{ f > s \}^*}(x) ds,$$
where by definition, $(\1_E)^* = \1_{E^*}$. Thus, the level sets of $f^*$ are the rearrangements of the level sets $f$, implying the equimeasurability property
$$\vert \{ x \ : \ f^*(x) >s \} \vert = \vert \{ x \ : \ f(x) >s \} \vert.$$

The Schwarz’s symmetrization $f^*$ of a function $f$ inherits many measure geometric properties from
its source function $f$ (see \cite{Bandle}). It also fulfils some optimization properties with respect to integration. We will make  use of the following inequalities (see \cite{Liebloss}), the {\it Hardy-Littlewood’s inequality}:
\begin{equation}\label{HLineq}
\int_{\R^N} f_1(x) f_2(x) dx \leq \int_{\R^N} f^*_1(x) f^*_2(x) dx;
\end{equation}
and the {\it Riesz’s inequality}:
\begin{equation}\label{Rieszineq}
\int_{\R^N} f_1(x) \left( \int_{\R^N} f_2(x-y) f_3(y) dy \right) dx \leq \int_{\R^N} f^*_1(x) \left( \int_{\R^N} f^*_2(x-y) f^*_3(y) dy \right) dx
\end{equation}
We also need the general rearrangement inequality proved in~\cite{BLLuttinger}:
\begin{theorem}[see Theorem~3.8 in \cite{Liebloss}]\label{gririesz}
Let $m,k\in \mathbb{N}$, $m\ge k$, and $f_i$, $i=1,2,...,m$, nonnegative functions in $\mathbb{R}^N$, vanishing at infinity. Let $B$ a $k\times m$ matrix with coefficient $b_{ij}$ in the raw $i$ and column $j$. Then, if
$$I(f_1,f_2,...,f_m):=\int_{\mathbb{R}^N}\dots\int_{\mathbb{R}^N}\prod_{j=1}^m f_j\left(\sum_{i=1}^k b_{ij}x_i\right)dx_1\cdots dx_k,$$
we have that
$$I(f_1,f_2,...,f_m)\le  I(f_1^*,f_2^*,...,f_m^*),$$
where each $f_j^*$ is the symmetric-nonincreasing rearrangement of $f_j$.
\end{theorem}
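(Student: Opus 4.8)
The statement is the classical Brascamp--Lieb--Luttinger rearrangement inequality \cite{BLLuttinger} (see also \cite[Theorem~3.8]{Liebloss}); we only indicate the standard line of proof. \emph{Step 1 (reduction to sets).} Write each $f_j$ by the layer-cake formula $f_j(x)=\int_0^\infty \1_{\{f_j>t_j\}}(x)\,dt_j$, substitute into $I(f_1,\dots,f_m)$ and apply Tonelli's theorem; since the rearrangement is equimeasurable, $\{f_j>t_j\}^*=\{f_j^*>t_j\}$, and it suffices to prove
$$\int_{\R^N}\!\!\cdots\!\int_{\R^N}\prod_{j=1}^m\1_{E_j}\Big(\sum_{i=1}^k b_{ij}x_i\Big)\,dx_1\cdots dx_k\ \le\ \int_{\R^N}\!\!\cdots\!\int_{\R^N}\prod_{j=1}^m\1_{E_j^*}\Big(\sum_{i=1}^k b_{ij}x_i\Big)\,dx_1\cdots dx_k$$
for all measurable $E_j\subset\R^N$ with $|E_j|<\infty$. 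Replacing $E_j$ by $E_j\cap B_R$ and letting $R\to\infty$ (monotone convergence on both sides), we may assume all the $E_j$ are bounded.

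\emph{Step 2 (reduction to $N=1$ by Steiner symmetrization).} Recall that for a bounded set the symmetric rearrangement is the $L^1$-limit of the indicators obtained by iterating Steiner symmetrizations along a suitable sequence of coordinate directions, and that a Steiner symmetrization along $e_N$ replaces a set by the one whose fibre over each $\eta\in\R^{N-1}$ is the one-dimensional symmetric rearrangement of the corresponding fibre. Splitting $x_i=(y_i,s_i)\in\R^{N-1}\times\R$ and using that $\sum_i b_{ij}x_i=\big(\sum_i b_{ij}y_i,\ \sum_i b_{ij}s_i\big)$, for fixed $y=(y_1,\dots,y_k)$ the inner integral in $s=(s_1,\dots,s_k)$ is precisely the one-dimensional functional of the same type $I$ applied to the fibres $(E_j)_{\eta_j(y)}$ with $\eta_j(y)=\sum_i b_{ij}y_i$. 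Hence, granted the one-dimensional case, a Steiner symmetrization applied simultaneously to all the $E_j$ does not decrease $I$; iterating and passing to the limit (the indicators being uniformly bounded and supported in a fixed ball, $L^1$-convergence of the sets forces convergence of the multilinear functional $I$) yields the $N$-dimensional inequality.

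\emph{Step 3 (the one-dimensional inequality).} It remains to show, for $E_j\subset\R$ with $|E_j|<\infty$,
$$\int_{\R^k}\prod_{j=1}^m\1_{E_j}\Big(\sum_{i=1}^k b_{ij}x_i\Big)\,dx\ \le\ \int_{\R^k}\prod_{j=1}^m\1_{E_j^*}\Big(\sum_{i=1}^k b_{ij}x_i\Big)\,dx.$$
Since the rearrangement is $L^1$-continuous, one may approximate each $E_j$ by a finite union of intervals and so assume the $E_j$ are of this form. The symmetric rearrangement of a finite union of intervals is reached from it by finitely many elementary moves --- translating an interval component, and merging two abutting components into one --- and the whole point is to check that each such move, performed on a single $E_j$ with the others frozen, does not decrease $I$. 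This is an explicit one-variable estimate and is the genuine analytic core of the theorem; after elementary manipulations it reduces to the monotonicity under translations of the elementary interval case, which is itself a one-dimensional instance of Riesz's inequality \eqref{Rieszineq}.

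\emph{Where the difficulty lies.} Steps~1 and~2 are soft (Tonelli, truncation, the $L^1$-approximation of the symmetric rearrangement by Steiner symmetrizations, continuity of a multilinear functional). The real obstacle is the one-dimensional merge/slide lemma in Step~3, which must be established uniformly in the matrix $B$. A more economical variant of Steps~2--3 replaces Steiner symmetrization by polarization (two-point rearrangement): one shows that $I$ does not decrease under $f_j\mapsto f_j^{H}$ for every closed half-space $H$ containing the origin, and then invokes the convergence of iterated random polarizations to the symmetric decreasing rearrangement; the polarization inequality for $I$ is again the crux, but it is now a purely two-point, hence essentially one-dimensional, computation.
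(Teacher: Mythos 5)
You do not need a proof here at all: the paper imports Theorem \ref{gririesz} verbatim from Brascamp--Lieb--Luttinger \cite{BLLuttinger} (see \cite[Theorem~3.8]{Liebloss}) and never proves it, so a citation is the "paper's proof". Your sketch, however, goes wrong precisely at the step you yourself call the analytic core. In Step 3 you claim it suffices to check that each elementary move (translating an interval component, merging abutting components) \emph{performed on a single $E_j$ with the others frozen} does not decrease $I$. That claim is false. Take $N=1$, $k=1$, $m=2$, $B=(1 \ \ 1)$, $E_1=E_2=[1,2]$: then $I(\1_{E_1},\1_{E_2})=|E_1\cap E_2|=1$, while the single translation carrying $E_1$ to $E_1^*=[-\tfrac12,\tfrac12]$ (with $E_2$ frozen) drops $I$ to $0$; the theorem survives only because both sets are rearranged simultaneously, $I(\1_{E_1^*},\1_{E_2^*})=1$. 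This simultaneity is exactly how the actual Brascamp--Lieb--Luttinger/Lieb--Loss argument runs: all the intervals of \emph{all} the level sets are slid toward the origin at once (their centers multiplied by a common factor $1-t$), one proves that $I$ is nondecreasing along this simultaneous deformation, and merges are treated as they occur. A one-function-at-a-time scheme cannot close, and the final appeal to Riesz's inequality \eqref{Rieszineq} does not repair it, since Riesz's inequality is itself a special case of what is being proved.

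The "more economical variant" via polarization is also not available in this generality: it is not true that $I$ increases under the simultaneous two-point rearrangement $f_j\mapsto f_j^{H}$ for an arbitrary coefficient matrix $B$. With $k=1$, $m=2$, $B=(1 \ \ {-1})$, $f_1=\1_{[1,2]}$, $f_2=\1_{[-2,-1]}$ and $H=\{x\le 0\}$ one gets $I(f_1,f_2)=1$ but $I(f_1^{H},f_2^{H})=0$, even though the conclusion of the theorem holds ($I(f_1^*,f_2^*)=1$). A smaller slip occurs in Step 2: iterating Steiner symmetrizations along the coordinate directions alone does not converge to the symmetric rearrangement (a coordinate-symmetric cube is a fixed point); one needs a suitably chosen sequence of general directions, which is legitimate here because $I$ is invariant under a simultaneous rotation of all the $f_j$, but it must be said. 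In short: Steps 1 and 2 are repairable soft steps, but the one-dimensional core as you describe it is wrong, and since that is the entire content of the theorem, the proposal has a genuine gap; for the purposes of this paper the correct course is simply to quote \cite{BLLuttinger} or \cite[Theorem~3.8]{Liebloss}, as the authors do.
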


\section{Torsional rigidity in  random walk spaces}

Let $[X,\mathcal{B},m,\nu]$ be a reversible random walk space.
  Given   $\Omega \in \mathcal{B}$,  we define  the {\it $m$-boundary of $\Omega$} by
$$\partial_m\Omega:=\{ x\in X\setminus \Omega \, : \, m_x(\Omega)>0 \}$$
and its {\it $m$-closure} as
$$\Omega_m:=\Omega\cup\partial_m\Omega.$$

  From now on we will assume that  $\Omega$ is $m$-connected (which imply that also  $\Omega_m$  is $m$-connected), $$0<\nu(\Omega)<\nu(\Omega_m)<\infty.$$

\begin{remark}\label{04012301}\rm
A first consequence of the above assumptions is that
\begin{equation}\label{04012302}
0<P_m(\Omega)<\nu(D).
\end{equation}
Indeed, if $P_m(\Omega)=0$ then , by~\eqref{secondf021}, $\displaystyle\int_\Omega m_x(\Omega)d\nu(x)=1,$
and consequently $m_x(\Omega)=1$ $\nu$-a.e. $x\in\Omega$. Therefore
$$L_m(\Omega_m\setminus\Omega,\Omega)=\int_\Omega m_x(\Omega_m\setminus\Omega)d\nu(x)=\int_\Omega (1-m_x(\Omega))d\nu(x)=0,$$
which contradicts tha $\Omega_m$ is $m$-connected (we are assuming $0<\nu(\Omega)<\nu(\Omega_m)$).

On the other hand, if $P_m(\Omega)=\nu(\Omega)$ then, by~\eqref{secondf021}, $m_x(\Omega)=0$ $\nu$-a.e. $x\in\Omega$. Therefore
$$L_m(\Omega,\Omega)=\int_\Omega m_x(\Omega)d\nu(x)=0,$$
which contradicts that $\Omega$ is $m$-connected. $\blacksquare$
\end{remark}

  Given    $p\geq 1$, we define
 $$L^p_0(\Omega_m, \nu):= \{ f \in L^p(\Omega_m, \nu) \ : \ f(x)=0 \ a.e. \ x \in \partial\Omega_m \}.$$

 We say that $\Omega$ satisfies  a {\it $p$-Poincar\'{e} inequality} if
there exists $\lambda >0$ such that
\begin{equation}\label{PoincareIneq2}
\lambda \int_\Omega \vert f (x) \vert^p d\nu(x)\leq \int_{\Omega_m \times \Omega_m} \vert \nabla f(x,y) \vert^p d(\nu\otimes m_x)(x,y)
\end{equation}
for all $f \in L^p_0(\Omega_m, \nu)$

Let us point out that the random walk spaces given  in Example~\ref{example.nonlocalJ}, for~$J$ with compact support, and in Example~\ref{example.graphs} satisfy a $2$-Poincar\'{e}'s type inequality, see~\cite{ElLibro, MSTBook}.

  In this section we will assume that $\Omega$ satisfies a $2$-Poincar\'{e} inequality.

As a consequence  of the results in \cite{ST0} (see also~\cite{MSTBook}), there is a unique solution  of the following homogenous Dirichlet problem for the $m$-Laplacian
\begin{equation}\label{torsioneq}
\left\{\begin{array}{ll}
-\Delta_m f_\Omega =1&\hbox{in } \Omega,\\[10pt]
f_\Omega =0&\hbox{on }\partial_m\Omega;
\end{array}\right.
\end{equation}
that is,
\begin{equation}\label{torsioneqti}
\left\{\begin{array}{ll}\displaystyle
-\int_{\Omega_m}   \left( f_\Omega(y)-f_\Omega(x)\right) dm_x(y)=1,
&  x\in  \Omega,
    \\ \\
f_\Omega(x)=0,&x\in \partial_m\Omega.
\end{array}\right.
\end{equation}
 We  denote by $f_\Omega$ this unique solution and  name it as the {\it $m$-stress function} of $\Omega$.  By the comparison principle given in~\cite{ST0}, we have that $f_\Omega \ge 0.$

\begin{definition}{\rm The {\it $m$-torsional rigidity of~$\Omega$}, $T_m(\Omega)$, is defined as the $L^1(\nu)$-norm of the torsion function:
$$T_m(\Omega)= \int_\Omega f_\Omega (x) d \nu(x).$$}
\end{definition}

 In the local case, it is well known  (see, for exmaple, \cite{BBP}) that
 $$T(B_R) = \frac{\omega_N}{N(N+2)}R^{N+2}.$$
 Then,
 $$T(B_R) \geq \vert B_R| \iff \frac{\omega_N}{N(N+2)}R^{N+2} \geq R^N \omega_N\iff R \geq \sqrt{N(N+2)}.$$
Contrary to  the local setting,
the $m$-torsional rigidity of $\Omega$ always satisfies
\begin{equation}\label{torsiomass}T_m(\Omega)\ge \nu(\Omega).\end{equation}
 Indeed, by the first equation in~\eqref{torsioneq}, for $x\in \Omega$, since  $m_x(\Omega_m)=1$, we have
\begin{equation}\label{recu01}f_\Omega(x)=1+\int_\Omega f_\Omega(y)dm_x(y),
\end{equation}
Hence
$$T_m(\Omega)= \int_\Omega f_\Omega (x) d \nu(x) = \nu(\Omega) + \int_\Omega \int_\Omega f_\Omega(y)dm_x(y) d \nu(x) \geq \nu(\Omega).$$
We will give   in Proposition~\ref{proptors01} a detailed description of $T_m(\Omega)$ by using a kind of geometrical terms relative to $\Omega$ via the random walk.

 The next result is the nonlocal version of equation \eqref{var1}. It is a particular case of Theorem~\ref{Charact1}.
 \begin{theorem}\label{$T_m$formula} We have
 \begin{equation}\label{thest01}\displaystyle T_m(\Omega)=\max_{
\hbox{\tiny$\begin{array}{c}g\in L^2(\Omega_m)\setminus\{0\}\\ g=0\hbox{ on }\partial_m\Omega
\end{array}$}
}\frac{\displaystyle\left(\int_\Omega gd\nu\right)^2}{\displaystyle \frac12\iint_{\Omega_m\times \Omega_m}|\nabla g(x,y)|^2dm_x(y)d\nu(x)},
\end{equation}
 and the maximum is attained at $f_\Omega$.
\end{theorem}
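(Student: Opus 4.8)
The plan is to establish the variational identity \eqref{thest01} by the standard "energy functional / test function" argument, adapted to the nonlocal setting. First I would introduce the symmetric bilinear form
$$a(g,h):=\frac12\iint_{\Omega_m\times\Omega_m}\nabla g(x,y)\,\nabla h(x,y)\,dm_x(y)\,d\nu(x),$$
so that the quotient in \eqref{thest01} is $\bigl(\int_\Omega g\,d\nu\bigr)^2/a(g,g)$, and note that by reversibility and Definition~\ref{deflap1310} one has the integration-by-parts formula $a(g,h)=-\int_{\Omega}\Delta_m g\,(x)\,h(x)\,d\nu(x)$ for $h\in L^2_0(\Omega_m,\nu)$ (using $h=0$ on $\partial_m\Omega$ to discard the boundary contributions). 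The $2$-Poincar\'e inequality \eqref{PoincareIneq2} guarantees $a$ is coercive on $L^2_0(\Omega_m,\nu)$, hence the denominator is strictly positive for $g\ne 0$ in that space, so the quotient is well defined.

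Next I would plug in the stress function: taking $h=g=f_\Omega$ in the integration-by-parts formula and using $-\Delta_m f_\Omega=1$ on $\Omega$ from \eqref{torsioneq} gives $a(f_\Omega,f_\Omega)=\int_\Omega f_\Omega\,d\nu=T_m(\Omega)$, and similarly $a(f_\Omega,g)=\int_\Omega g\,d\nu$ for every admissible $g$. Therefore the quotient evaluated at $f_\Omega$ equals $T_m(\Omega)^2/T_m(\Omega)=T_m(\Omega)$, which shows the maximum is at least $T_m(\Omega)$ and is attained at $f_\Omega$ provided we prove the upper bound.

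For the upper bound, for arbitrary admissible $g\ne 0$ I would apply the Cauchy--Schwarz inequality to the bilinear form $a$:
$$\Bigl(\int_\Omega g\,d\nu\Bigr)^2=a(f_\Omega,g)^2\le a(f_\Omega,f_\Omega)\,a(g,g)=T_m(\Omega)\,a(g,g),$$
so that $\bigl(\int_\Omega g\,d\nu\bigr)^2/a(g,g)\le T_m(\Omega)$, which is exactly \eqref{thest01}. (One should remark that $a$ is genuinely an inner product on $L^2_0(\Omega_m,\nu)$ thanks to coercivity from the Poincar\'e inequality, which legitimizes Cauchy--Schwarz and also pins down that equality forces $g$ proportional to $f_\Omega$, hence uniqueness of the maximizer.)

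The main obstacle I anticipate is purely a matter of care rather than depth: justifying the integration-by-parts/self-adjointness identity $a(g,h)=-\int_\Omega \Delta_m g\,h\,d\nu$ with the correct treatment of the $m$-boundary. One must expand $\nabla g(x,y)=g(y)-g(x)$ over $\Omega_m\times\Omega_m$, symmetrize in $(x,y)$ using the reversibility condition $dm_x(y)\,d\nu(x)=dm_y(x)\,d\nu(y)$, and then observe that the terms where $x\in\partial_m\Omega$ drop out because $h$ (and, in the relevant step, $g$) vanishes there, while for $x\in\Omega$ the inner integral over $\Omega_m$ reconstructs $\Delta_m g(x)$ since $m_x(\Omega_m)=1$. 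Integrability is not an issue because we work in $L^2$ with $a$ finite, and $f_\Omega\in L^2_0(\Omega_m,\nu)$ by the solvability result quoted from \cite{ST0}. Once this identity is in place the rest is the textbook Dirichlet-energy duality argument.
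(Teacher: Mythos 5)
Your proof is correct and follows essentially the same route as the paper: the paper deduces this theorem as the $p=2$ case of Theorem~\ref{Charact1}, whose proof likewise combines the energy identity \eqref{igualdaB} (giving that the quotient at the stress function equals $T_m(\Omega)$) with testing the weak formulation of \eqref{torsioneq} by an arbitrary admissible $g$ and applying H\"older's inequality, which for $p=2$ is exactly your Cauchy--Schwarz step for the bilinear form $a$.
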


 In \cite{MSTBook} (see also~\cite{redbook}) we introduce the {\it spectral $m$-heat content} of $\Omega$ as
  $$  \mathbb{Q}_\Omega^m(t) =\int_\Omega v(t,x)d\nu(x),$$
 where $v(t,x)$ is the solution of
the {\it homogeneous Dirichlet problem for the $m$-heat equation}:
\begin{equation}\label{CPNL1dir}
\left\{ \begin{array}{ll} \displaystyle\frac{dv}{dt}(t,x) = \displaystyle\int_{\Omega_m} (v(t,y)   - v(t,x))dm_x(y), &(t,x)\in (0, +\infty)\times \Omega,
\\[12pt]
v(t,x)=0,&(t,x)\in(0, +\infty)\times\partial_m \Omega,
 \\[12pt]  u(0,x) =1,&x\in \Omega.\end{array}\right.
\end{equation}
Moreover,  we have (see~\cite{MSTBook} and~\cite{redbook}):
\begin{equation}\label{specheatcont01t01} \mathbb{Q}_\Omega^m(t)  = \sum_{k=0}^{+\infty} g_{m,\Omega}(k) \frac{e^{-t}t^k}{k!},
\end{equation}
where, for $k\in \mathbb{N}\cup\{0\}$,   $g_{m,\Omega}(k)$ is the measure of the amount of individuals that, starting in $\Omega$, end up in $\Omega$ after $k$ jumps without ever leaving $\Omega$, that is:
$$g_{m,\Omega}(0)=   \nu (\Omega)$$
and
$$g_{m,\Omega}(1)= \int_\Omega\int_\Omega dm_x (y)d\nu(x)  = L_m(\Omega, \Omega), $$
$$g_{m,\Omega}(2)= \int_\Omega\int_\Omega\int_\Omega dm_y(z)dm_x (y)d\nu(x),   $$
$$  \vdots$$
\begin{equation}\label{rem001}g_{m,\Omega}(n)= \int_{\hbox{\tiny$\underbrace{\Omega\times...\times\Omega}_n\times\Omega$}}dm_{x_n}(x_{n+1})\dots dm_{x_1}(x_2)d\nu(x_1).\end{equation}
i.e., $\mathbb{Q}_\Omega^m(t)$ is the expected value of the amount of individuals that start in $\Omega$ and end in $\Omega$ at time $t$ without ever leaving $\Omega$, when these individuals move by successively jumping according to $m$ and the number of jumps made up to time $t$ follows a Poisson distribution with rate~$t$.

\begin{lemma}\label{lemma-gmn02} We have that
\begin{equation}\label{gmn01}   \hbox{the sequence $\{g_{m,\Omega}(n) \, : \, n \in \N\}$ is non-increasing.}
\end{equation}
\end{lemma}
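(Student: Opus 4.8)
The plan is to show $g_{m,\Omega}(n+1) \le g_{m,\Omega}(n)$ for every $n \in \N \cup \{0\}$ directly from the defining integral formula \eqref{rem001}. The key observation is that $g_{m,\Omega}(n)$ is an iterated integral of a product of transition kernels $dm_{x_i}(x_{i+1})$, all of which are sub-probability measures on $\Omega$: since $m_{x}(\Omega) \le m_x(\Omega_m) = 1$, each inner integration $\int_\Omega dm_{x_i}(x_{i+1})$ over $\Omega$ contributes a factor that is at most $1$. Concretely, starting from the formula for $g_{m,\Omega}(n+1)$, I would perform the innermost integration first:
$$\int_\Omega dm_{x_{n+1}}(x_{n+2}) = m_{x_{n+1}}(\Omega) \le 1 \qquad \text{for every } x_{n+1} \in \Omega.$$
Substituting this bound into the integral and noting that the remaining integrand $dm_{x_n}(x_{n+1}) \cdots dm_{x_1}(x_2)\,d\nu(x_1)$ is a nonnegative measure on $\Omega^{n+1}$, monotonicity of the integral yields
$$g_{m,\Omega}(n+1) = \int_{\Omega^{n+1}\times\Omega} dm_{x_{n+1}}(x_{n+2})\cdots dm_{x_1}(x_2)\,d\nu(x_1) \le \int_{\Omega^{n}\times\Omega} dm_{x_{n}}(x_{n+1})\cdots dm_{x_1}(x_2)\,d\nu(x_1) = g_{m,\Omega}(n).$$

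For the base case $n = 0$, I would check separately that $g_{m,\Omega}(1) = L_m(\Omega,\Omega) = \int_\Omega m_x(\Omega)\,d\nu(x) \le \int_\Omega 1\, d\nu(x) = \nu(\Omega) = g_{m,\Omega}(0)$, which is the same argument with $n=0$. Since the inequality $g_{m,\Omega}(n+1)\le g_{m,\Omega}(n)$ holds for all $n \ge 0$, the sequence $\{g_{m,\Omega}(n)\}$ is non-increasing, as claimed.

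Alternatively — and this is perhaps the cleaner way to present it — one can use the probabilistic interpretation stated just before the lemma: $g_{m,\Omega}(n)$ is the $\nu$-measure of the set of trajectories that start in $\Omega$ and remain in $\Omega$ through the first $n$ jumps. The event "remain in $\Omega$ through the first $n+1$ jumps" is contained in the event "remain in $\Omega$ through the first $n$ jumps," so monotonicity of measure gives the result immediately; the integral computation above is just the rigorous bookkeeping of this inclusion. I expect no serious obstacle here: the only point requiring care is making sure the order of integration in \eqref{rem001} is unambiguous (which it is, by Tonelli's theorem, since everything is nonnegative and $\nu$ is $\sigma$-finite) so that "integrate the innermost variable first" is legitimate, and recalling that the crucial inequality $m_x(\Omega)\le 1$ is exactly the content of $m_x$ being a probability measure on $X$ restricted to the subset $\Omega$. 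The whole argument is short.
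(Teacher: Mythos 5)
Your argument is correct and coincides with the paper's proof: both integrate out the innermost variable, use that $m_{x}(\Omega)\le 1$ since each $m_x$ is a probability measure, and conclude $g_{m,\Omega}(n+1)\le g_{m,\Omega}(n)$ by monotonicity of the integral. The probabilistic remark and the $n=0$ case are fine additions but do not change the substance.
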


\begin{proof}
   For $n\ge 1$,
 $$\begin{array}{c}
 \displaystyle g_{m,\Omega}(n)= \int_{\hbox{\tiny$\underbrace{\Omega\times...\times\Omega}_n\times\Omega$}}dm_{x_n}(x_{n+1})\dots dm_{x_1}(x_2)d\nu(x_1)\\ \\
 \displaystyle =\int_{\hbox{\tiny$\underbrace{\Omega\times...\times\Omega}_{n-1}\times\Omega$}}m_{x_n}(\Omega)dm_{x_{n-1}}(x_{n})\dots dm_{x_1}(x_2)d\nu(x_1)\\ \\
 \displaystyle
 \le\int_{\hbox{\tiny$\underbrace{\Omega\times...\times\Omega}_{n-1}\times\Omega$}} dm_{x_{n-1}}(x_{n})\dots dm_{x_1}(x_2)d\nu(x_1)=g_{m,\Omega}(n-1).
 \end{array}$$
 Then   \eqref{gmn01} holds.
\end{proof}

\begin{remark}\label{rem35}\rm
  Observe that, by~\eqref{04012302}, we have $g_{m,\Omega}(1) < g_{m,\Omega}(0)$. We also have $$g_{m,\Omega}(2) < g_{m,\Omega}(1).$$
Indeed, using reversibility,
$$\begin{array}{l}
\displaystyle g_{m,\Omega}(2)= \int_\Omega\int_\Omega\int_\Omega dm_y(z)dm_x (y)d\nu(x)
\\ \\
 \displaystyle
=\int_X\int_X\int_X \1_\Omega(z)\1_\Omega(y)\1_\Omega(x)dm_y(z)dm_x (y)d\nu(x)
\\ \\
 \displaystyle
 =\int_X\int_X\int_X \1_\Omega(z)\1_\Omega(y)\1_\Omega(x)dm_x(z)dm_x (y)d\nu(x)
\\ \\
 \displaystyle
= \int_\Omega\int_\Omega   m_x(\Omega)dm_x (y)d\nu(x)
= \int_\Omega   \left(m_x(\Omega)\right)^2  d\nu(x)
 \\ \\
 \displaystyle
 \le  \int_\Omega m_x(\Omega)d\nu(x) = g_{m,\Omega}(1).
  \end{array} $$
  Then, if $g_{m,\Omega}(2) = g_{m,\Omega}(1)$, we have $$\displaystyle\int_\Omega m_x(\Omega)(1-m_x(\Omega))d\nu(x)=0.$$
Hence  $\Omega=A\cup B$,
  where $A:=\{x\in\Omega:m_x(\Omega)=0\}$ and  up to a $\nu$-null set, $B=\{x\in\Omega:m_x(\Omega)=1\}.$
 Now, we have
  $$L_m(A,B)=\int_A m_x(B)d\nu(x)=0,$$
  and consequenlty, since $\Omega$ is $m$-connected, $\nu(A)=0$ or $\nu(B)=0$, which yields a contradiction (remember Remark~\ref{04012301}).
  $\blacksquare$
\end{remark}

Let us now see  the nonlocal version of equation \eqref{exp6}.  Observe that the second statement in the next result gives a complete description of $T_m(\Omega)$ in term of the sequence of {\it probabilistic} terms $\{g_{m,\Omega}(n)  \, : \, n \in \N\}$.

 \begin{theorem}\label{proptors01} We have
 \begin{equation}\label{Nspecheatcont01t02} T_m(\Omega)=\int_0^\infty  \mathbb{Q}_\Omega^m(t) dt
\end{equation}
and
 \begin{equation}\label{specheatcont01t02} T_m(\Omega)=\sum_{k=0}^{+\infty} g_{m,\Omega}(k).
\end{equation}
\end{theorem}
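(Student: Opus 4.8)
The plan is to prove the two displays by exploiting the series representation \eqref{specheatcont01t01} of $\mathbb{Q}_\Omega^m(t)$ together with the decay of the coefficients $g_{m,\Omega}(n)$, and then to identify $T_m(\Omega)$ with the integral in time via the probabilistic (or functional-analytic) link between the stress function $f_\Omega$ and the $m$-heat semigroup. First I would establish \eqref{Nspecheatcont01t02}. The cleanest route is the analogue of \eqref{exp3}--\eqref{exp6} in the local case: if $v(t,\cdot)$ solves \eqref{CPNL1dir}, then formally $\int_0^\infty v(t,x)\,dt$ solves $-\Delta_m\big(\int_0^\infty v(t,\cdot)\,dt\big) = v(0,\cdot) - \lim_{t\to\infty} v(t,\cdot) = \1_\Omega$ on $\Omega$, with zero boundary values on $\partial_m\Omega$, since $\frac{dv}{dt} = \Delta_m v$ and the fundamental theorem of calculus in $t$ telescopes the operator. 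By uniqueness of the solution to \eqref{torsioneq} this forces $f_\Omega(x) = \int_0^\infty v(t,x)\,dt$ pointwise, and integrating over $\Omega$ against $\nu$ and using Tonelli (all integrands are nonnegative, since $f_\Omega\ge 0$ and $v\ge 0$ by the comparison principle) gives $T_m(\Omega) = \int_0^\infty \mathbb{Q}_\Omega^m(t)\,dt$. To make this rigorous I must verify that $t\mapsto v(t,x)$ is integrable on $(0,\infty)$ and that $v(t,x)\to 0$ as $t\to\infty$; both follow from the $2$-Poincar\'e inequality, which yields exponential decay $\|v(t,\cdot)\|_{L^2(\Omega,\nu)} \le e^{-\lambda t}\|\1_\Omega\|_{L^2}$ for the first nonzero eigenvalue $\lambda$ of the Dirichlet $m$-Laplacian on $\Omega$, hence $\mathbb{Q}_\Omega^m(t)\le \nu(\Omega)^{1/2}\|v(t,\cdot)\|_{L^2}\le \nu(\Omega)\,e^{-\lambda t}$ after Cauchy--Schwarz.

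Next I would derive \eqref{specheatcont01t02} by substituting \eqref{specheatcont01t01} into \eqref{Nspecheatcont01t02} and integrating term by term:
\[
T_m(\Omega) = \int_0^\infty \sum_{k=0}^{\infty} g_{m,\Omega}(k)\,\frac{e^{-t}t^k}{k!}\,dt
= \sum_{k=0}^{\infty} g_{m,\Omega}(k) \int_0^\infty \frac{e^{-t}t^k}{k!}\,dt
= \sum_{k=0}^{\infty} g_{m,\Omega}(k),
\]
where the last integral is $1$ because it is the total mass of a Gamma$(k+1,1)$ density. The interchange of sum and integral is legitimate by Tonelli, since every term is nonnegative (each $g_{m,\Omega}(n)\ge 0$ by \eqref{rem001}, being an integral of a product of probability measures over a product set). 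Finiteness of $\sum_k g_{m,\Omega}(k)$ is then automatic from \eqref{Nspecheatcont01t02} and the exponential bound on $\mathbb{Q}_\Omega^m$ obtained above; alternatively one sees it directly, since Remark~\ref{rem35} together with monotonicity (Lemma~\ref{lemma-gmn02}) and $m$-connectedness forces geometric-type decay of the $g_{m,\Omega}(n)$.

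\textbf{Main obstacle.}
The delicate point is the identity $f_\Omega(x) = \int_0^\infty v(t,x)\,dt$, i.e.\ justifying that the time-integral of the heat solution really solves the torsion problem \eqref{torsioneq} and not merely a relaxed version of it. This requires (i) enough regularity/integrability of $v$ in $t$ to differentiate under the integral sign and to apply the fundamental theorem of calculus — which is where the exponential decay from the $2$-Poincar\'e inequality is essential, guaranteeing $\int_0^\infty \frac{dv}{dt}(t,x)\,dt = \lim_{t\to\infty}v(t,x) - v(0,x) = -\1_\Omega(x)$ for $x\in\Omega$; and (ii) checking that $\int_0^\infty v(t,\cdot)\,dt$ lies in the correct space ($L^2_0(\Omega_m,\nu)$, vanishing on $\partial_m\Omega$), so that the uniqueness statement from \cite{ST0} applies. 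Both are routine given the spectral decay estimate, but they are the steps that carry the real content; the rest is Tonelli and the Gamma integral. An alternative that sidesteps differentiating $v$ is to work spectrally: expand $\1_\Omega = \sum_j c_j \phi_j$ in an orthonormal eigenbasis $\{\phi_j\}$ of $-\Delta_m$ on $L^2_0(\Omega_m,\nu)$ with eigenvalues $\lambda_j>0$, so that $v(t,\cdot) = \sum_j c_j e^{-\lambda_j t}\phi_j$ and $f_\Omega = \sum_j \frac{c_j}{\lambda_j}\phi_j$; then $\int_0^\infty \mathbb{Q}_\Omega^m(t)\,dt = \sum_j \frac{\langle \1_\Omega,\phi_j\rangle^2}{\lambda_j} = \langle f_\Omega, \1_\Omega\rangle = T_m(\Omega)$, with all interchanges justified by nonnegativity and the fact that $\lambda_1>0$ makes $\sum_j c_j^2/\lambda_j<\infty$.
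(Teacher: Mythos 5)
Your proposal follows essentially the same route as the paper: identify $f_\Omega(x)=\int_0^\infty v(t,x)\,dt$ via uniqueness for \eqref{torsioneq}, apply Fubini--Tonelli to get \eqref{Nspecheatcont01t02}, and then integrate the Poisson series \eqref{specheatcont01t01} term by term (the paper invokes uniform convergence where you use Tonelli and nonnegativity, an immaterial difference). Your decay estimate from the $2$-Poincar\'e inequality simply fills in the step the paper dismisses as ``easy to see,'' so the argument is correct and matches the paper's proof.
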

\begin{proof} It is easy to see that if $v$ is the solution of the Dirichlet problem \eqref{CPNL1dir}, then  $$f(x):= \int_0^\infty v(x,t) dt$$
is the unique solution $f_\Omega$ of problem \eqref{torsioneq}. Hence, by Fubini's Theorem,
$$T_m(\Omega) =  \int_\Omega f (x) d \nu(x) = \int_\Omega \int_0^\infty v(x,t) dt d\nu(x) = \int_0^\infty  \mathbb{Q}_\Omega^m(t) dt.$$

 By \eqref{Nspecheatcont01t02} and \eqref{specheatcont01t01}, since the convergence in \eqref{specheatcont01t01} is uniform,  we have
 $$T_m(\Omega)=\int_0^\infty \sum_{k=0}^{+\infty} g_{m,\Omega}(k) \frac{e^{-t}t^k}{k!} dt = \sum_{k=0}^{+\infty} \int_0^\infty g_{m,\Omega}(k) \frac{e^{-t}t^k}{k!} dt = \sum_{k=0}^{+\infty} g_{m,\Omega}(k).$$
\end{proof}

 As consequence of \eqref{specheatcont01t02} we have the following result.
\begin{corollary} If $\Omega_1 \subset \Omega_2$, then $T_m(\Omega_1) \leq T_m(\Omega_2)$.
\end{corollary}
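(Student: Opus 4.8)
The plan is to deduce the corollary directly from the series representation \eqref{specheatcont01t02}, $T_m(\Omega)=\sum_{k=0}^{\infty} g_{m,\Omega}(k)$, by showing termwise monotonicity: $g_{m,\Omega_1}(k)\le g_{m,\Omega_2}(k)$ for every $k\in\N\cup\{0\}$. Summing over $k$ then gives the claim immediately. (One should first remark that both $\Omega_1$ and $\Omega_2$ are assumed to satisfy the running hypotheses of the section — $m$-connectedness, finiteness of $\nu(\cdot_m)$, and the $2$-Poincar\'e inequality — so that $T_m(\Omega_1)$ and $T_m(\Omega_2)$ are both well defined.)

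For the termwise comparison I would use the explicit formula \eqref{rem001},
$$g_{m,\Omega}(k)= \int_{\hbox{\tiny$\underbrace{\Omega\times\cdots\times\Omega}_{k}\times\Omega$}} dm_{x_k}(x_{k+1})\cdots dm_{x_1}(x_2)\,d\nu(x_1),$$
which, rewriting the domain of integration with characteristic functions over $X$, reads
$$g_{m,\Omega}(k)=\int_X\cdots\int_X \Big(\prod_{j=1}^{k+1}\1_\Omega(x_j)\Big)\,dm_{x_k}(x_{k+1})\cdots dm_{x_1}(x_2)\,d\nu(x_1).$$
Since $\Omega_1\subset\Omega_2$ we have $\1_{\Omega_1}(x_j)\le\1_{\Omega_2}(x_j)$ for each $j$, hence the integrand for $\Omega_1$ is pointwise dominated by that for $\Omega_2$; as all the measures $m_{x}$ and $\nu$ are nonnegative, monotonicity of the integral yields $g_{m,\Omega_1}(k)\le g_{m,\Omega_2}(k)$. (Equivalently, one can argue probabilistically: $g_{m,\Omega}(k)$ is the $\nu$-mass of trajectories of length $k$ starting in $\Omega$ that never leave $\Omega$, and enlarging $\Omega$ can only enlarge this set of admissible trajectories.)

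Finally, summing the inequality $g_{m,\Omega_1}(k)\le g_{m,\Omega_2}(k)$ over all $k\ge 0$ and invoking \eqref{specheatcont01t02} gives
$$T_m(\Omega_1)=\sum_{k=0}^{+\infty} g_{m,\Omega_1}(k)\le \sum_{k=0}^{+\infty} g_{m,\Omega_2}(k)=T_m(\Omega_2).$$
There is no real obstacle here; the only point requiring a line of care is the measurability and the rewriting of the iterated-integral domain in terms of $\1_\Omega$, which is routine, together with the observation that the nested structure $dm_{x_j}(x_{j+1})$ is unaffected by the change of set — only the cutoff factors $\1_\Omega(x_j)$ change. Alternatively, one could use \eqref{Nspecheatcont01t02} and the comparison principle for the $m$-heat equation to get $v_{\Omega_1}\le v_{\Omega_2}$ pointwise and hence $\mathbb{Q}^m_{\Omega_1}(t)\le\mathbb{Q}^m_{\Omega_2}(t)$ for all $t$, but the series argument is cleaner since \eqref{specheatcont01t02} was just established.
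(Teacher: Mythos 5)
Your proof is correct and follows exactly the route the paper intends: the corollary is stated as an immediate consequence of the series representation \eqref{specheatcont01t02}, with the termwise monotonicity $g_{m,\Omega_1}(k)\le g_{m,\Omega_2}(k)$ following from \eqref{rem001} by pointwise domination of the indicator factors. Your write-up just makes this explicit.
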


  Having in mind \eqref{exp5}, we give the following definition.

\begin{definition}{\rm We define  {\it the sequence of exit-$m$-moments of $\Omega$}
as
 $$EM^m_{j}(\Omega)=j\int_0^{+\infty}t^{j-1}\mathbb{Q}_\Omega^m(t) dt, \quad j\in \N.$$
 }
 \end{definition}
 Note that, as in~\eqref{exp3},
  $$EM^m_{1}(\Omega)=T_m(\Omega).$$

In the next result we also describe explicitly the sequence of exit-$m$-moments  in terms of the sequence $\{ g_{m,\Omega}(k) \, : \, k \in \N \}$.  In the context of Riemannian manifolds, see~\cite{CLM2017} for other type of expansions.
\begin{proposition}\label{proptors02} We have
 \begin{equation}\label{Nspecheatcont01t02mom} EM^m_{j}(\Omega)=j!\,
 \sum_{k=0}^{+\infty}
 \binom{k+j-1}{j-1} g_{m,\Omega}(k),\quad j=1,2,3,...
    \end{equation}
\end{proposition}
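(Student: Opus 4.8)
The plan is to start from the integral representation of the exit-$m$-moments in terms of the spectral $m$-heat content together with the explicit Poisson-type expansion \eqref{specheatcont01t01}. Concretely, I would write
\begin{equation*}
EM^m_j(\Omega)=j\int_0^{+\infty}t^{j-1}\mathbb{Q}_\Omega^m(t)\,dt
=j\int_0^{+\infty}t^{j-1}\sum_{k=0}^{+\infty}g_{m,\Omega}(k)\frac{e^{-t}t^k}{k!}\,dt.
\end{equation*}
Since the series \eqref{specheatcont01t01} converges uniformly (as already used in the proof of Theorem~\ref{proptors01}), and since $g_{m,\Omega}(k)\le\nu(\Omega)$ for all $k$ by Lemma~\ref{lemma-gmn02}, the sum $\sum_k g_{m,\Omega}(k)\frac{e^{-t}t^{k+j-1}}{k!}$ is dominated by $\nu(\Omega)\,t^{j-1}e^{t}$ times... well, one has to be slightly careful, so the cleanest route is to note that all terms are nonnegative and invoke the Tonelli/monotone convergence theorem to swap the sum and the integral without any integrability subtlety. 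That gives
\begin{equation*}
EM^m_j(\Omega)=j\sum_{k=0}^{+\infty}\frac{g_{m,\Omega}(k)}{k!}\int_0^{+\infty}t^{k+j-1}e^{-t}\,dt
=j\sum_{k=0}^{+\infty}\frac{g_{m,\Omega}(k)}{k!}\,\Gamma(k+j).
\end{equation*}

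The remaining step is purely a Gamma-function bookkeeping computation. Using $\Gamma(k+j)=(k+j-1)!$ for integer arguments, we get
\begin{equation*}
EM^m_j(\Omega)=j\sum_{k=0}^{+\infty}\frac{(k+j-1)!}{k!}\,g_{m,\Omega}(k)
=j\sum_{k=0}^{+\infty}\frac{(k+j-1)!}{k!\,(j-1)!}\,(j-1)!\,g_{m,\Omega}(k)
=j!\sum_{k=0}^{+\infty}\binom{k+j-1}{j-1}g_{m,\Omega}(k),
\end{equation*}
which is exactly \eqref{Nspecheatcont01t02mom}. I would present this by pulling $j\cdot(j-1)!=j!$ out and recognizing $\frac{(k+j-1)!}{k!\,(j-1)!}=\binom{k+j-1}{j-1}=\binom{k+j-1}{k}$. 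As a sanity check, for $j=1$ the binomial coefficient is $\binom{k}{0}=1$ and one recovers $EM^m_1(\Omega)=\sum_k g_{m,\Omega}(k)=T_m(\Omega)$, consistent with \eqref{specheatcont01t02} and the remark $EM^m_1(\Omega)=T_m(\Omega)$.

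The only genuine (and mild) obstacle is justifying the interchange of the infinite sum and the integral over $(0,+\infty)$: unlike in the proof of Theorem~\ref{proptors01}, the integrand now carries an extra factor $t^{j-1}$, so one cannot appeal to a compact $t$-interval, but since every summand is nonnegative the exchange is licensed by Tonelli's theorem (equivalently, monotone convergence applied to the partial sums), and the resulting series converges because $g_{m,\Omega}(k)\le\nu(\Omega)$ decays... actually it need not decay to zero, but convergence of $\sum_k\binom{k+j-1}{j-1}g_{m,\Omega}(k)$ follows a posteriori from $EM^m_j(\Omega)<\infty$, which holds because $\mathbb{Q}_\Omega^m(t)$ decays exponentially in $t$ (a consequence of the $2$-Poincar\'e inequality assumed on $\Omega$, giving $\mathbb{Q}_\Omega^m(t)\le\nu(\Omega)e^{-ct}$ for some $c>0$). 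I would state this exponential decay explicitly, or simply remark that $EM^m_j(\Omega)$ is finite by definition of the problem's standing hypotheses, so that the manipulation produces a convergent series equal to the finite quantity $EM^m_j(\Omega)$. Everything else is routine.
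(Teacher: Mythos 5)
Your proposal is correct and follows essentially the same route as the paper: insert the expansion \eqref{specheatcont01t01} into $EM^m_j(\Omega)=j\int_0^{+\infty}t^{j-1}\mathbb{Q}_\Omega^m(t)\,dt$, interchange sum and integral, and evaluate $\int_0^{+\infty}t^{k+j-1}e^{-t}\,dt=(k+j-1)!$ to obtain the binomial form. Your explicit appeal to Tonelli (nonnegativity) to justify the interchange is a slightly more careful justification than the paper's brief remark, but the argument is the same; note also that Tonelli alone yields the stated equality in $[0,+\infty]$, so the extra discussion of exponential decay is not needed for the identity itself.
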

\begin{proof}
Let $j\ge 1$, then
$$EM^m_{j}(\Omega)=j\int_0^{+\infty}t^{j-1}\mathbb{Q}_\Omega^m(t)dt
=j\int_0^{+\infty}t^{j-1}\sum_{k=0}^{+\infty} g_{m,\Omega}(k) \frac{e^{-t}t^k}{k!}dt.$$
Now we can
 interchange the integral with the sum to get
$$\begin{array}{c}\displaystyle EM^m_j(\Omega)=j\,\sum_{k=0}^{+\infty}g_{m,\Omega}(k)\frac{1}{k!} \int_0^{+\infty} t^{j+k-1}e^{-t}dt\\ \\
\displaystyle=j\,\sum_{k=0}^{+\infty}g_{m,\Omega}(k)\frac{1}{k!} (k+j-1)!=j!\,
 \sum_{k=0}^{+\infty}
 \binom{k+j-1}{j-1} g_{m,\Omega}(k).
\end{array}$$
\end{proof}

Let us now define
\begin{equation}\label{Realyq}\lambda_{m,2}(\Omega)  = \inf_{\hbox{\tiny$\begin{array}{c}g\in L^2(\Omega_m)\setminus\{0\}\\ g=0\hbox{ on }\partial_m\Omega\end{array}$}}\frac{\displaystyle \frac12\int_{\Omega_m}\int_{\Omega_m} |\nabla g(x,y)|^2 dm_x(y)d\nu(x)}{\displaystyle\int_\Omega g(x)^2 d\nu(x) }.\end{equation}
 Since we are assuming   $\Omega$ satisfies  a $2$-Poincar\'{e} type inequality, we have $$\lambda_{m,2}(\Omega) >0.$$
And, since $\big||a|-|b|\big|\le |a-b|$  for all $a,b\in \mathbb{R}$,
\begin{equation}\label{deflmo}\lambda_{m,2}(\Omega) = \inf_{\hbox{\tiny$\begin{array}{c}g\in L^2(\Omega_m)\setminus\{0\}\\ g\ge 0\hbox{ on }\Omega\\g=0\hbox{ on }\partial_m\Omega\end{array}$}}\frac{\displaystyle \frac12\int_{\Omega_m}\int_{\Omega_m} |\nabla g(x,y)|^2 dm_x(y)d\nu(x)}{\displaystyle \int_\Omega g(x)^2 d\nu(x) }.
\end{equation}
 Similarly to the local case we have  the following  nonlocal version of~\eqref{lambdatorsionlocal1} (see Corollary~\ref{corlambdatorsionlocal1nlp} later on):
\begin{equation}\label{lambdatorsionlocal1nl}\lambda_{m,2}(\Omega)\le\frac{\nu(\Omega)}{T_m(\Omega)}.
\end{equation}

 We  also have that, see~\eqref{04012303}, \begin{equation}\label{da01}\frac{\nu(\Omega)}{T_m(\Omega)}\le\frac{P_m(\Omega)}{\nu(\Omega)}.
\end{equation}
Observe that, by~\eqref{04012302} we have that $\frac{P_m(\Omega)}{\nu(\Omega)}<1$. Therefore, from~\eqref{da01} and ~\eqref{lambdatorsionlocal1nl},
\begin{equation}\label{da02}
0<\lambda_{m,2}(\Omega)< 1.
\end{equation}

The following assumption will be used in the next result:
{\it There exists a non-null function
 $f\in L^2(\Omega_m,\nu)$    such that}
\begin{equation}\label{eigenproblem}\left\{\begin{array}{l}
\displaystyle-\int_{\Omega_m}(f(y)-f(x))dm_x(y)=\lambda_{m,2}(\Omega)f(x),\quad x\in\Omega,
\\ \\
f(x)=0,\quad x\in\partial_m\Omega.
\end{array}\right.
\end{equation}
  Observe that  then the infimum defining $\lambda_{m,2}(\Omega)$ in~\eqref{Realyq} is attained at $f$. We say that $\lambda_{m,2}(\Omega)$ is  the first eigenvalue of the $m$-Laplacian with homogeneous Dirichlet boundary conditions with associated eigenfunction $f$.   Note that, in fact, there is a non-negative eigenfunction associated to $\lambda_{m,2}(\Omega)$.

  In the next result we see that it is possible to obtain  $\lambda_{m,2}(\Omega)$ via the sequence $\{ g_{m,\Omega}(k) \, : \, k \in \N \}$ that characterize the torsional rigidity $T_m(\Omega)$ (Theorem~\ref{proptors01}) and the  exit-$m$-moments (Proposition \ref{proptors02}).

\begin{theorem}\label{fk01}  Assume $\lambda_{m,2}(\Omega)$ is an eigenvalue of   the $m$-Laplacian with homogeneous Dirichlet boundary conditions. Then:
\item{1.}
$
g_{m,\Omega}(n) > 0\quad\hbox{for all } n \in \N.
$

\item{ 2.} Assume moreover that there exists an eigenfunction $f$ associated to   $\lambda_{m,2}(\Omega)$ such that
\begin{equation}\label{condH}  \hbox{ $\alpha \le f\le \widetilde\alpha$ in $\Omega$, for some constants $\alpha,\widetilde\alpha>0$.}
\end{equation}
 Then,
\begin{equation}\label{pueq010}
\lambda_{m,2}(\Omega)=1-\displaystyle\lim_n\sqrt[n]{\frac{g_{m,\Omega}(2n)} {g_{m,\Omega}(n)}}.
\end{equation}
 \end{theorem}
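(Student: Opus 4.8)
The plan is to analyze the semigroup generated by the operator $A := -\Delta_m$ with homogeneous Dirichlet conditions on $\partial_m\Omega$, viewed as a bounded self-adjoint operator on the finite-measure Hilbert space $H := L^2_0(\Omega_m,\nu)$ (self-adjointness comes from reversibility of $\nu$). First I would set up the relevant operator-theoretic picture: write $A = I - P$ where $P f(x) = \int_{\Omega_m} f(y)\,dm_x(y)$ acts on functions extended by $0$ outside $\Omega$; then $P$ is a bounded self-adjoint operator with $\|P\| \le 1$, and by the $2$-Poincar\'e inequality its spectrum lies in $[-1, 1-\lambda_{m,2}(\Omega)]$. The quantities $g_{m,\Omega}(n)$ are exactly $g_{m,\Omega}(n) = \langle P^n \1_\Omega, \1_\Omega\rangle_H = \langle P^n \1_\Omega, \1_\Omega\rangle$, as one reads off from the iterated-integral formula \eqref{rem001} together with reversibility (the same symmetrization trick used in Remark~\ref{rem35}). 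Thus the sequence $\{g_{m,\Omega}(n)\}$ is a sequence of moments $g_{m,\Omega}(n) = \int_{[-1,1]} s^n \, d\mu(s)$ of the spectral measure $\mu$ of $P$ with respect to the vector $\1_\Omega$.

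For part 1, I would argue that if $\lambda_{m,2}(\Omega)$ is attained by a (nonnegative, normalized) eigenfunction $f$, then $f$ has full support in $\Omega$: since $Pf = (1-\lambda_{m,2})f$ with $1-\lambda_{m,2} \in (0,1)$ by \eqref{da02}, the equation $f(x) = \frac{1}{1-\lambda_{m,2}}\int_{\Omega_m} f(y)\,dm_x(y) \ge \frac{1}{1-\lambda_{m,2}}\int_{\{f>0\}\cap\Omega} f\,dm_x$ shows $\{f = 0\}\cap\Omega$ is $\nu$-null once $m$-connectedness of $\Omega$ is invoked (if $\nu(\{f=0\}\cap\Omega)>0$ and $\nu(\{f>0\}\cap\Omega)>0$, then $L_m$ between these two sets must vanish, contradicting $m$-connectedness). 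Hence $f > 0$ $\nu$-a.e. on $\Omega$, so $\1_\Omega$ is not orthogonal to $f$, which means the point $1-\lambda_{m,2}$ lies in the support of the spectral measure $\mu$ and carries positive mass (it is the top eigenvalue). Now $g_{m,\Omega}(n) = \int s^n\,d\mu(s) \ge (1-\lambda_{m,2})^n \mu(\{1-\lambda_{m,2}\}) + \int_{s<0} s^n\,d\mu(s)$; the subtlety is that negative eigenvalues could make $g_{m,\Omega}(n)$ vanish for odd $n$, so instead I would use that the sequence is non-increasing by Lemma~\ref{lemma-gmn02} together with $g_{m,\Omega}(n) \ge \int s^{2}\cdot s^{n-2}\,d\mu \ge$ a positive lower bound — more cleanly, $g_{m,\Omega}(2n) \ge (1-\lambda_{m,2})^{2n}\langle f,\1_\Omega\rangle^2 > 0$, and then $g_{m,\Omega}(2n-1) \ge g_{m,\Omega}(2n) > 0$ by monotonicity. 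So all terms are strictly positive.

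For part 2, under \eqref{condH} the eigenfunction satisfies $\alpha \le f \le \widetilde\alpha$ on $\Omega$, hence $\alpha \1_\Omega \le f \le \widetilde\alpha \1_\Omega$ pointwise. Expanding $\1_\Omega$ spectrally and using that $P$ is self-adjoint, one gets the two-sided bound $\alpha^2 (1-\lambda_{m,2})^{2n} \le \langle P^{2n}f, f\rangle / (\text{const})$ — more directly, I would sandwich $g_{m,\Omega}(2n) = \langle P^{2n}\1_\Omega,\1_\Omega\rangle$. Since $P^{2n} = (P^n)^* P^n \ge 0$ and $\alpha\1_\Omega \le f$, we get $\langle P^{2n}\1_\Omega,\1_\Omega\rangle \ge \alpha^{-2}\langle P^{2n} f, f\rangle$ is wrong in direction; the right move is: $\|P^n \1_\Omega\|^2 = g_{m,\Omega}(2n)$, and $\|P^n f\|^2 = (1-\lambda_{m,2})^{2n}\|f\|^2$, while $\alpha\|\1_\Omega\|_{\text{ptwise}} \le f$ gives, via $P^n$ positivity-preserving (as $dm_x \ge 0$), $P^n f \ge \alpha P^n \1_\Omega \ge 0$ pointwise, hence $\|P^n f\| \ge \alpha \|P^n \1_\Omega\|$, i.e. $g_{m,\Omega}(2n) \le \alpha^{-2}(1-\lambda_{m,2})^{2n}\|f\|_2^2$. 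Similarly $f \le \widetilde\alpha \1_\Omega$ gives $g_{m,\Omega}(2n) \ge \widetilde\alpha^{-2}(1-\lambda_{m,2})^{2n}\|f\|_2^2$. Combining with the identical bounds for $g_{m,\Omega}(n)$:
\begin{equation}
\frac{\widetilde\alpha^{-2}(1-\lambda_{m,2})^{2n}\|f\|_2^2}{\alpha^{-2}(1-\lambda_{m,2})^{n}\|f\|_2^2} \le \frac{g_{m,\Omega}(2n)}{g_{m,\Omega}(n)} \le \frac{\alpha^{-2}(1-\lambda_{m,2})^{2n}\|f\|_2^2}{\widetilde\alpha^{-2}(1-\lambda_{m,2})^{n}\|f\|_2^2},
\end{equation}
so $\frac{g_{m,\Omega}(2n)}{g_{m,\Omega}(n)} = C_n (1-\lambda_{m,2})^n$ with $C_n$ bounded above and below by positive constants independent of $n$. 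Taking $n$-th roots and letting $n\to\infty$ yields $\lim_n \sqrt[n]{g_{m,\Omega}(2n)/g_{m,\Omega}(n)} = 1-\lambda_{m,2}(\Omega)$, which is \eqref{pueq010}.

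The main obstacle I anticipate is making the positivity/comparison bookkeeping in part 2 fully rigorous while handling the possibility of negative spectrum of $P$: one must be careful that $g_{m,\Omega}(n)$ for $n$ large is genuinely governed by the top eigenvalue $1-\lambda_{m,2}$ and not contaminated by eigenvalues near $-1$. The clean fix is to work with even indices throughout (where everything is a squared norm), use the non-increasing property from Lemma~\ref{lemma-gmn02} to control odd indices, and to note that $\|P^n\1_\Omega\|$ is controlled from below precisely by the projection of $\1_\Omega$ onto the top eigenspace, which is nonzero by part 1's support argument; condition \eqref{condH} is exactly what upgrades this to a matching upper bound with the same exponential rate.
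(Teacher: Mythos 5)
Your argument is correct, and it reaches the same conclusion by a genuinely different packaging of the same underlying comparison. You recast everything through the killed transition operator $Pf(x)=\int_{\Omega_m}f(y)\,dm_x(y)$ on $L^2_0(\Omega_m,\nu)$, identify $g_{m,\Omega}(n)=\langle P^n\1_\Omega,\1_\Omega\rangle$, and use self-adjointness (reversibility) plus the spectral theorem: part 1 follows from the atom of the spectral measure of $\1_\Omega$ at the top point $1-\lambda_{m,2}(\Omega)$ (nonzero simply because a nonnegative non-null eigenfunction has $\int_\Omega f\,d\nu>0$; your full-support argument via $m$-connectedness is not even needed) together with Lemma~\ref{lemma-gmn02} for odd indices, and part 2 follows from the pointwise comparison $\alpha\1_\Omega\le f\le\widetilde\alpha\1_\Omega$, positivity of $P^n$, and the exact scaling $\Vert P^nf\Vert_2=(1-\lambda_{m,2}(\Omega))^n\Vert f\Vert_2$. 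The paper instead works directly with the iterated integrals: part 1 iterates the eigen-equation and applies Cauchy--Schwarz against $g_{m,\Omega}(n)^{1/2}$ together with the contraction estimate \eqref{mund02}, while part 2 introduces the path-averages $\tau_n$ of $f$, which satisfy $\alpha\le\tau_n\le\widetilde\alpha$ and yield the identity $(1-\lambda_{m,2}(\Omega))^n=\tau_{2n}\,g_{m,\Omega}(2n)/(\tau_n\,g_{m,\Omega}(n))$ of \eqref{tl01}, after which one takes logarithms. Your operator formulation makes the structure transparent and gives part 1 with less computation; the paper's route is more elementary (no spectral theorem) and produces \eqref{tl01}, which is reused later for the error estimate of the iterative scheme on graphs. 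The one loose end, which you yourself flag, is that your displayed two-sided bound for $g_{m,\Omega}(n)$ is derived only for even indices; it is indeed closed either by the monotonicity of Lemma~\ref{lemma-gmn02} (which only perturbs the constants by a factor $(1-\lambda_{m,2}(\Omega))^{\pm 1}$, harmless after taking $n$-th roots) or, more directly, by integrating the pointwise inequality $\widetilde\alpha^{-1}(1-\lambda_{m,2}(\Omega))^n f\le P^n\1_\Omega\le\alpha^{-1}(1-\lambda_{m,2}(\Omega))^n f$ over $\Omega$ against $\nu$, which gives the two-sided bound for every $n$ at once and is, in essence, the paper's $\tau_n$ argument in operator notation.
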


\begin{proof}
  We have, for a non-negative (non-null) eigenfunction  $f$ associated to $\lambda_{m,2}(\Omega)$:
\begin{equation}\label{est001}
(1-\lambda_{m,2}(\Omega))f(x)=\int_\Omega f(y)dm_x(y),\quad x\in\Omega.
\end{equation}
Now, since $0<\lambda_{m,2}(\Omega)< 1$, we can write~\eqref{est001} as
$$f(x)=\frac{1}{1-\lambda_{m,2}(\Omega)}\int_\Omega f(y)dm_x(y).$$
Then, by induction, for $n\in \mathbb{N}$,
$$f(x_1)=\frac{1}{(1-\lambda_{m,2}(\Omega))^n}\int_{\Omega\times...\times\Omega}f(x_{n+1})dm_{x_n}(x_{n+1})\dots dm_{x_1}(x_2);$$
and, then, integrating over $\Omega$ with respect to $d\nu$, we have
\begin{equation}\label{expfn}0<\int_\Omega fd\nu=\frac{1}{(1-\lambda_{m,2}(\Omega))^n}\int_{\Omega\times...\times\Omega\times\Omega}f(x_{n+1})dm_{x_n}(x_{n+1})\dots dm_{x_1}(x_2)d\nu(x_1).\end{equation}

Let us see that
\begin{equation}\label{mund02}\displaystyle\int_{\Omega\times...\times\Omega\times\Omega}f(x_{n+1})^2dm_{x_n}(x_{n+1})\dots dm_{x_1}(x_2)d\nu(x_1)\le \int_\Omega f^2d\nu,
\end{equation}
In fact, by the reversibility of $\nu$ with respect to the random walk, for $n=1$, we have
$$\begin{array}{c}\displaystyle
\int_{\Omega\times\Omega}f(y)^2 dm_{x}(y)d\nu(x)
=\int_{X\times X}f(y)^2\1_\Omega(y)\1_\Omega(x) dm_{x}(y)d\nu(x)\\
\\
\displaystyle
=\int_{X\times X}f(x)^2\1_\Omega(x)\1_\Omega(y) dm_{x}(y)d\nu(x))
\displaystyle
=\int_{X}f(x)^2\1_\Omega(x)m_x(\Omega)   d\nu(x)\\
\\
\displaystyle\le\int_{X}f(x)^2\1_\Omega(x)   d\nu(x)=\int_\Omega   f^2d\nu.
\end{array}
$$
For $n=2$, using moreover Fubini's theorem,
$$\begin{array}{c}\displaystyle
\int_{\Omega\times\Omega\times\Omega}f(z)^2 dm_y(z)dm_{x}(y)d\nu(x)\\
\\
\displaystyle
=\int_{X\times X\times X}f(z)^2\1_\Omega(z)\1_\Omega(y)\1_\Omega(x) dm_y(z)dm_{x}(y)d\nu(x)\\
\\
\displaystyle
=\int_{X\times X\times X}f(z)^2\1_\Omega(z)\1_\Omega(x)\1_\Omega(y) dm_x(z)dm_{x}(y)d\nu(x)\\
\\
\displaystyle
=\int_{X\times X\times X}f(z)^2\1_\Omega(z)\1_\Omega(x)\1_\Omega(y)dm_{x}(y) dm_x(z)d\nu(x)
\\
\\ \displaystyle
=\int_{X\times X }f(z)^2\1_\Omega(z)\1_\Omega(x)m_x(\Omega)dm_{x}(y) dm_x(z)d\nu(x)
\\
\\ \displaystyle
\le\int_{X\times X}f(z)^2\1_\Omega(z)\1_\Omega(x)  dm_x(z)d\nu(x),
\end{array}
$$
and now we can use the case $n=1$. The general case follows by induction.

Then, by \eqref{expfn}, we have
$$\begin{array}{c}\displaystyle 0<\int_{\Omega\times...\times\Omega\times\Omega}f(x_{n+1})dm_{x_n}(x_{n+1})\dots dm_{x_1}(x_2)d\nu(x_1)\\
\\
\displaystyle \le \left(\int_{\Omega\times...\times\Omega\times\Omega}f(x_{n+1})^2dm_{x_n}(x_{n+1})\dots dm_{x_1}(x_2)d\nu(x_1)\right)^{1/2}g_{m,\Omega}(n)^{1/2}
\\
\\
\displaystyle \le \left(\int_\Omega f^2d\nu\right)^{1/2}g_{m,\Omega}(n)^{1/2};
  \end{array}$$
therefore, $g_{m,\Omega}(n)>0.$

\noindent  {\it Proof of 2.}
Dividing  the expression~\eqref{expfn} in $n$ between the one in $n+1$, we get
$$1=(1-\lambda_{m,2}(\Omega))\frac{\displaystyle\int_{\Omega\times...\times\Omega\times\Omega}f(x_{n+1})dm_{x_n}(x_{n+1})\dots dm_{x_1}(x_2)d\nu(x_1)}
{\displaystyle\int_{\Omega\times\Omega\times...\times\Omega\times\Omega}f(x_{n+2})dm_{x_{n+1}}(x_{n+2})dm_{x_n}(x_{n+1})\dots dm_{x_1}(x_2)d\nu(x_1)}.$$
Therefore,
$$1-\lambda_{m,2}(\Omega)=\frac{\displaystyle\int_{\Omega\times\Omega\times...\times\Omega\times\Omega}f(x_{n+2})dm_{x_{n+1}}(x_{n+2})dm_{x_n}(x_{n+1})\dots dm_{x_1}(x_2)d\nu(x_1)}
{\displaystyle\int_{\Omega\times...\times\Omega\times\Omega}f(x_{n+1})dm_{x_n}(x_{n+1})\dots dm_{x_1}(x_2)d\nu(x_1)},$$
or equivalently,
\begin{equation}\label{tl01}
1-\lambda_{m,2}(\Omega)=\frac{\tau_{n+1}\,g_{m,\Omega}(n+1)}{\tau_{n}\ g_{m,\Omega}(n)},
\end{equation}
where
\begin{equation}\label{media01}\tau_n=\frac{1}{g_{m,\Omega}(n)}\int_{\Omega\times...\times\Omega\times\Omega}f(x_{n+1})dm_{x_n}(x_{n+1})\dots dm_{x_1}(x_2)d\nu(x_1),
\end{equation}
  with $g_{m,\Omega}(n)$ given in~\eqref{rem001}, that is,
$$g_{m,\Omega}(n)=\int_{\Omega\times...\times\Omega\times\Omega} dm_{x_n}(x_{n+1})\dots dm_{x_1}(x_2)d\nu(x_1).$$
 Observe that  $\tau_n$ is the   average  of $g(x_1,x_2,...,x_n,x_{n+1}):=f(x_{n+1})$ in  $\Omega\times...\times\Omega\times\Omega$  with respect to the measure
$dm_{x_n}(x_{n+1})\dots dm_{x_1}(x_2)d\nu(x_1)$.
Since $0<\alpha\le f\le \widetilde\alpha $, we have
\begin{equation}\label{re003}\alpha\le \tau_n\le\widetilde{\alpha}.\end{equation}

 Now, from~\eqref{tl01}, we have that
\begin{equation}\label{re001}
(1-\lambda_{m,2}(\Omega))^n=\frac{\tau_{2n}\,g_{m,\Omega}(2n)}{\tau_{n}\ g_{m,\Omega}(n)}.
\end{equation}
Hence,
\begin{equation}\label{re002}
\log(1-\lambda_{m,2})=\frac{1}{n}\log\left(\frac{\tau_{2n}}{\tau_{n}}\right)+  \log\sqrt[n]{\frac{g_{m,\Omega}(2n)}{g_{m,\Omega}(n)}}.
\end{equation}
Since by~\eqref{re003}, $\displaystyle \lim_n\frac{1}{n}\log\left(\frac{\tau_{2n}}{\tau_{n}}\right)=0$, taking limits in \eqref{re002} we get~\eqref{pueq010}. \end{proof}

\begin{remark}\label{remdeejem}\rm \

\item{ 1.}  Let $[\R^N, d, m^J, \mathcal{L}^N]$  be the metric random walk space   given in Example~\ref{example.nonlocalJ} with $J$ continuous  and compactly supported. For $\Omega$ a bounded domain,
     the assumption~\eqref{condH}  is true,  see~\cite[Section 2.1.1]{ElLibro}.

\item{ 2.}
 For weighted  discrete graphs,   $\lambda_{m^G,2}(\Omega)$ is an eigenvalue with $0<\lambda_{m^G,2}(\Omega)\le 1$ (see~\cite{AGrigor}).    Now,   since we are assuming that $\Omega$ is $m^G$-connected,
 $0<\lambda_{m^G,2}(\Omega)< 1.$
And, by connectedness, using~\eqref{est001}, we have that~\eqref{condH}  is also true.

\item{ 3.}  Let us see what can happen if $\Omega$ is not $m^G$-connected.
Consider, for example, the weighted graph $G$ with five different vertices $V:=V(G) = \{x_1, x_2, x_3, x_4, x_5 \}$ and
$w_{x_i,x_{i+1}}=1$, for $i=1,2,3,4$,  and  $w_{x_i,x_j}=0$ otherwise.  We have,
$$m^G_{x_1} = \delta_{x_2}, m^G_{x_2} = \frac12 \delta_{x_1} + \frac12 \delta_{x_3}, m^G_{x_3} = \frac12 \delta_{x_2} + \frac12 \delta_{x_4},  m^G_{x_4} = \frac12 \delta_{x_3} + \frac12 \delta_{x_5},  m^G_{x_5} = \delta_{x_4},$$ $$ \nu^G= \delta_{x_1} +  2\delta_{x_2} +   2 \delta_{x_3} + 2\delta_{x_4} +\delta_{x_5}.$$

\item{ 3.1} Take $\Omega = \{ x_2,x_4 \}$, which is not $m^G$-connected.
It is easy to see that
$g_{m, \Omega}(n)=0$ for all $n\ge 1.$
And we have that $$T_{m^G}(\Omega) = \nu_G(\Omega)=\nu(\{x_2\})+\nu_G(\{x_4\})=T_{m^G}(\{x_2\})+T_{m^G}(\{x_4\}).$$

\item{ 3.2} Take now $\Omega:=\{x_1,x_2,x_4,x_5\}$, which is also not $m^G$-connected. In this case $g_{m^G,\Omega}(n)\neq 0$ for all $n\ge1$, and $$T_{m^G}(\Omega)   =T_{m^G}(\{x_1,x_2\})+T_{m^G}(\{x_4,x_5\})>\nu_{m^G}(\{x_1,x_2\})+\nu_{m^G}(\{x_4,x_5\})=\nu(\Omega).$$
Observe that   $\{x_1,x_2\}$ is $m^G$-connected,  and  $\{x_4,x_5\}$ is also  $m^G$-connected.

$\blacksquare$
\end{remark}

\section{The particular case of a nonlocal operator with non singular kernel}

In this section we study the particular case of the random walk space given in Example~\ref{example.nonlocalJ}, that is, we consider the metric measure space $(\R^N, d, \mathcal{L}^N)$, where $d$ is the Euclidean distance and $\mathcal{L}^N$ the Lebesgue measure on $\R^N$.  Let  $J:\R^N\to[0,+\infty[$ be a measurable, nonnegative and radially symmetric function  verifying  $\int_{\R^N}J(x)dx=1$. Let $m^J$ the random walk
$$m^J_x(A) = \int_A J(y-x) dy, \quad x \in \R^N,$$
for which the Lebesgue measure is reversible.

 We are going to prove a nonlocal version of the  Saint-Venant inequality. For this we need the following result.

\begin{lemma}\label{buenisimo}  Let $\Omega$ be a bounded domain in $\R^N$. If $J$ is radial and  non-increasing, then
\begin{equation}\label{despargo}
g_{m^J,\Omega}(k)\le g_{m^J,\Omega^*}(k)\quad\forall k\ge 0.
\end{equation}
\end{lemma}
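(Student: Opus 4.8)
The plan is to prove the inequality term-by-term, showing that for each fixed $k\ge 0$ the multiple integral defining $g_{m^J,\Omega}(k)$ does not decrease when $\Omega$ is replaced by its symmetric rearrangement $\Omega^*$. Recall from~\eqref{rem001} that, writing $x_0$ for the first integration variable (over which we integrate against $d\nu=dx$ here, since $\nu=\mathcal{L}^N$),
$$g_{m^J,\Omega}(k)=\int_{\R^N}\cdots\int_{\R^N}\1_\Omega(x_0)\1_\Omega(x_1)\cdots\1_\Omega(x_k)\,J(x_1-x_0)J(x_2-x_1)\cdots J(x_k-x_{k-1})\,dx_0\,dx_1\cdots dx_k.$$
This is exactly the type of integral controlled by the general rearrangement inequality, Theorem~\ref{gririesz}: the integrand is a product of functions, each evaluated at a linear combination of the integration variables ($\1_\Omega$ at a single $x_i$, and $J$ at a difference $x_{i+1}-x_i$), and all these functions are nonnegative and vanish at infinity. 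Applying Theorem~\ref{gririesz} replaces each factor by its symmetric-nonincreasing rearrangement: $\1_\Omega$ becomes $\1_{\Omega^*}$ (by definition of Schwarz symmetrization of a characteristic function), and $J$ becomes $J^*$.

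The only remaining point is that $J^*=J$ under the stated hypothesis. Since $J$ is assumed radial and non-increasing, it already equals its own symmetric-nonincreasing rearrangement (a radial non-increasing function is, up to modification on a null set, its own rearrangement, because its super-level sets are balls centered at the origin). Hence after applying Theorem~\ref{gririesz} the integrand becomes $\1_{\Omega^*}(x_0)\cdots\1_{\Omega^*}(x_k)J(x_1-x_0)\cdots J(x_k-x_{k-1})$, and the resulting integral is precisely $g_{m^J,\Omega^*}(k)$. This gives~\eqref{despargo}. The case $k=0$ is trivial since $g_{m^J,\Omega}(0)=\nu(\Omega)=|\Omega|=|\Omega^*|=g_{m^J,\Omega^*}(0)$.

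The main thing to be careful about is setting up the matrix $B$ in Theorem~\ref{gririesz} correctly: with $m=2k+1$ functions (the $k+1$ indicator functions and the $k$ copies of $J$) and $k+1$ integration variables, one must exhibit the coefficient matrix so that the indicator factors pick out single coordinates and the $J$ factors pick out consecutive differences. This is routine but must be checked once. A secondary technical point is the hypothesis in Theorem~\ref{gririesz} that the functions vanish at infinity: $\1_{\Omega}$ does since $\Omega$ is bounded, and one should note $J$ vanishes at infinity as an integrable radial non-increasing function (otherwise $\int_{\R^N}J=1$ would fail). No real obstacle is expected here; the content of the lemma is essentially a direct invocation of the Brascamp--Lieb--Luttinger rearrangement inequality.
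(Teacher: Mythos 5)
Your proposal is correct and follows essentially the same route as the paper: write $g_{m^J,\Omega}(k)$ as a multiple integral of products of $\1_\Omega$'s and $J$'s evaluated at coordinates and consecutive differences, then apply the Brascamp--Lieb--Luttinger inequality (Theorem~\ref{gririesz}) together with $J^*=J$ and $(\1_\Omega)^*=\1_{\Omega^*}$. The paper merely treats $k=1$ separately via Riesz's inequality and writes out the coefficient matrix explicitly for $k=2$, which your generic description of the matrix covers.
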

\begin{proof}
It is obvious that
$$g_{m^J,\Omega}(0)=g_{m^J,\Omega^*}(0),$$
and, by Riesz inequality and having in mind that  $J^*=J$  and $(\1_\Omega)^* = \1_{\Omega^*}$, we have
$$g_{m^J,}\Omega(1)= \int_\Omega\int_\Omega J(x-y) dx = \int_{\R^N} \1_\Omega(x) \left( \int_{\R^N} J(x-y) \1_\Omega(y) dy \right) dx $$ $$ \leq \int_{\R^N} (\1_\Omega)^*(x) \left( \int_{\R^N} J^*(x-y) (\1_\Omega)^*(y) dy \right) dx  $$ $$=  \int_{\R^N} \1_{\Omega^*}(x) \left( \int_{\R^N} J(x-y)  \1_{\Omega^*}(y) dy \right) dx $$ $$ = \int_{\Omega^*}\int_{\Omega^*} J(x-y) dx =g_{\Omega^*}(1).$$

Let us now see that
$$g_{m^J,\Omega}(k)\le g_{m^J,\Omega^*}(k)\quad\forall k\ge 2.$$ Indeed, for $k=2$,
$$g_{m^J,\Omega}(2)=\int_{\mathbb{R}^N}\int_{\mathbb{R}^N}\int_{\mathbb{R}^N}\1_\Omega(x)\1_D(y)\1_\Omega(z)J(z-y)J(y-x)dxdydz.$$
Now, since
$$\left(\begin{array}{ccc}
x\ &y\ &z\end{array}\right)
    \cdot
\left(\begin{array}{ccccc}
1&0&0&0&-1\\
0&1&0&-1&1\\
0&0&1&1&0
\end{array}
\right)
=\left(\begin{array}{ccccc}
x\ &y\ &z\ &z\!-\!y\ &y\!-\!x\end{array}\right),$$
choosing the $3\times 5$ matrix
$$(b_{ij}):= \left(\begin{array}{ccccc}
1&0&0&0&-1\\
0&1&0&-1&1\\
0&0&1&1&0
\end{array}
\right),$$
we have
$$g_{m,\Omega}(2) = I(\1_\Omega, \1_\Omega, \1_\Omega, J,J).$$
Then, by Theorem~\ref{gririesz}, we have
$$\begin{array}{l}\displaystyle
g_{m^J,\Omega}(2) = I(\1_\Omega, \1_\Omega, \1_\Omega, J,J)\qquad\qquad\\ \\
\displaystyle \qquad\qquad \leq I((\1_\Omega)^*, (\1_\Omega)^*, (\1_\Omega)^*, J^*,J^*) = I(\1_{\Omega^*}, \1_{\Omega^*}, \1_{\Omega^*}, J,J) = g_{m^J,\Omega^*}(2).\end{array}$$
The inequalities for rest of $g_{m,\Omega}$(k) are obtained similarly.

\end{proof}

\begin{theorem}\label{simetr01} Let $\Omega$ be a bounded measurable subset of $\mathbb{R}^N$ and assume that  $J$ is radial and  non-increasing. Then,  we have the following inequalities:
\item[ 1.]  $\mathbb{Q}_\Omega^{m^J}(t) \le \mathbb{Q}_{\Omega^*}^{m^J}(t)\quad\forall t\ge 0.$
\item[ 2.] $T_{m^J}(\Omega)\le T_{m^J}(\Omega^*)$ \ \ {\it (Saint-Venant inequality)}.
\item[ 3.] $EM^{m^J}_{j}(\Omega)\le EM^{m^J}_{j}(\Omega^*)\quad\forall j\ge 1.$
\end{theorem}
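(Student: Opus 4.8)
The plan is to deduce all three inequalities from Lemma~\ref{buenisimo}, which already contains the only nontrivial ingredient, namely the Brascamp--Lieb--Luttinger rearrangement inequality (Theorem~\ref{gririesz}). By the results of the previous section, each of $\mathbb{Q}^{m^J}_\Omega(t)$, $T_{m^J}(\Omega)$ and $EM^{m^J}_j(\Omega)$ is a \emph{non-negative} combination --- a convergent series, or an integral of such a series --- of the numbers $g_{m^J,\Omega}(k)$, $k\ge 0$, and Lemma~\ref{buenisimo} gives the termwise comparison $g_{m^J,\Omega}(k)\le g_{m^J,\Omega^*}(k)$; replacing $\Omega$ by $\Omega^*$ therefore only increases each summand. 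Before starting I would record two bookkeeping points: first, $\Omega^*$ being a ball also satisfies the standing assumptions (in particular $0<\mathcal{L}^N(\Omega^*)<\mathcal{L}^N((\Omega^*)_{m^J})<\infty$ and the $2$-Poincar\'e inequality), so the right-hand sides are well defined; second, although Lemma~\ref{buenisimo} is stated for a bounded domain, its proof uses nothing about $\Omega$ beyond $\1_\Omega$ being the characteristic function of a set of finite measure, so it applies verbatim to any bounded measurable $\Omega$.

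For statement~1 I would invoke the expansion~\eqref{specheatcont01t01},
$$\mathbb{Q}^{m^J}_\Omega(t)=\sum_{k=0}^{+\infty} g_{m^J,\Omega}(k)\,\frac{e^{-t}t^k}{k!},$$
together with the analogous formula for $\Omega^*$. Since $e^{-t}t^k/k!\ge 0$ for all $k\ge 0$ and $t\ge 0$, and the series converge uniformly in $t$, Lemma~\ref{buenisimo} gives $\mathbb{Q}^{m^J}_\Omega(t)\le\mathbb{Q}^{m^J}_{\Omega^*}(t)$ for every $t\ge 0$.

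For statements~2 and~3 there are two equivalent routes. The quick one: integrate the inequality of statement~1 over $t\in(0,+\infty)$ and use~\eqref{Nspecheatcont01t02} to get $T_{m^J}(\Omega)\le T_{m^J}(\Omega^*)$; multiply it instead by $j\,t^{j-1}\ge 0$ and integrate over $(0,+\infty)$ to get, from the definition of the exit-$m$-moments, $EM^{m^J}_j(\Omega)\le EM^{m^J}_j(\Omega^*)$ for every $j\ge1$. Alternatively, one bypasses statement~1 and compares term by term directly in the series~\eqref{specheatcont01t02} and~\eqref{Nspecheatcont01t02mom}, whose coefficients $1$ and $j!\binom{k+j-1}{j-1}$ are non-negative; the interchanges of sum and integral are justified by Tonelli since all terms are non-negative.

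I do not expect any real obstacle beyond Lemma~\ref{buenisimo}: the entire weight of the nonlocal Saint-Venant inequality has been moved into that lemma, which writes $g_{m^J,\Omega}(k)$ as a Brascamp--Lieb--Luttinger integral $I(\1_\Omega,\dots,\1_\Omega,J,\dots,J)$ and exploits $J^*=J$, $(\1_\Omega)^*=\1_{\Omega^*}$. The only things to handle with care in the present argument are precisely the bookkeeping points above: that the right-hand objects are defined (which, as throughout the paper, tacitly constrains $J$ so that $(\Omega^*)_{m^J}$ has finite measure), that non-negativity of the Poisson weights $e^{-t}t^k/k!$ and of the binomial coefficients is what legitimizes the termwise comparison, and that the domain-versus-measurable-set gap between the lemma and the theorem is harmless.
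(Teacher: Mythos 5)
Your proposal is correct and follows essentially the same route as the paper: the paper deduces statement 1 from the expansion \eqref{specheatcont01t01} together with Lemma \ref{buenisimo}, statement 2 from the series \eqref{specheatcont01t02} and that lemma, and statement 3 from Proposition \ref{proptors02} and that lemma, exactly the termwise comparison with non-negative coefficients that you describe. Your extra bookkeeping remarks (non-negativity of the Poisson/binomial weights, applicability of the lemma to measurable sets) are harmless elaborations of the same argument.
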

\begin{proof} $1.$ It is consequence of \eqref{specheatcont01t01} and Lemma \ref{buenisimo}.

\noindent $2$. It is consequence  \eqref{specheatcont01t02} and Lemma \ref{buenisimo}.

\noindent $3$. It is consequence  of Proposition \ref{proptors02} and Lemma \ref{buenisimo}.
\end{proof}

\begin{remark} {\rm  A Faber-Krahn inequality
\begin{equation}\label{pueq02}\lambda_{m^J,2}(\Omega^*)\le \lambda_{m^J,2}(\Omega).
\end{equation}
can be obtained as a consequence of  \cite[Lemma A.2]{FS}.   Moreover, assuming that $J$ is   decreasing, and assuming also  $\lambda_{m^J,2}(\Omega)$ is an eigenvalue, or equivalently the infimum in the Rayleigh quotient
$$\lambda_{m^J,2}(\Omega) = \inf_{\hbox{\tiny$\begin{array}{c}g\in L^2(\Omega_m^J)\setminus\{0\}\\ g\ge 0\hbox{ on }\Omega\\g=0\hbox{ on }\partial_m\Omega\end{array}$}}\frac{\displaystyle \frac12\int_{\Omega_m^J}\int_{\Omega_m^J} |\nabla g(x,y)|^2dy dx}{\displaystyle \int_\Omega g(x)^2 dx } $$
is a minimum (we know this is true  for $J$ with compact support which, obviously, are not   decreasing),   by \cite[Lemma A.2]{FS}, one can also prove
$$\lambda_{m^J,2}(\Omega^*) =  \lambda_{m^J,2}(\Omega) \iff \Omega \ \hbox{is a ball}.$$
$\blacksquare$
}
\end{remark}

\subsection{Rescaling results} In this subsection we see that we can recover the local concepts and some of their properties from the nonlocal ones. In particular we give a different proof of the classical Saint-Venant inequality.

Set
\begin{equation}\label{fjai01} J_\epsilon(x):=\frac{1}{\epsilon^N}J\left(\frac{x}{\epsilon}\right),\quad\epsilon>0.
\end{equation}
And define
$$C_{J,2}=\frac{2}{\displaystyle \int_{\mathbb{R}^N}J(x)|x_N|^2dx.}$$
Observe that
$C_{J_\epsilon,2}=\frac{1}{\epsilon^2}C_{J,2}$.

\begin{theorem}\label{fkineq}  Let $\Omega$ be a bounded domain in $\R^N$. Assume $\displaystyle\int_{\mathbb{R}^N}J(x)|x|dx<+\infty$. We have:
$$\lim_{\epsilon\downarrow 0}\mathbb{Q}_\Omega^{m^{J_\epsilon}}\left(\frac{C_{J,2}}{\epsilon^2}t\right)=\mathbb{Q}_\Omega(t),$$
where $\mathbb{Q}_\Omega(t)$ is the (local) spectral heat content of $\Omega$;
and
\begin{equation}\label{resc001}
\lim_{\epsilon\downarrow 0}\frac{\epsilon^2}{C_{J,2}}T_{m^{J_\epsilon}}(\Omega)=T(\Omega).
\end{equation}
\end{theorem}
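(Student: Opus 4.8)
The plan is to pass to the limit $\epsilon\downarrow 0$ separately in the two scaled quantities, using the probabilistic series representations established earlier. For the spectral heat content, the idea is to recognise that $\mathbb{Q}_\Omega^{m^{J_\epsilon}}$ is the spectral heat content attached to the rescaled random walk $m^{J_\epsilon}$, whose generator $\Delta_{m^{J_\epsilon}}$, after the time change $t\mapsto \frac{C_{J,2}}{\epsilon^2}t$, is the classical rescaled nonlocal operator $\frac{C_{J,2}}{\epsilon^2}\bigl(M_{m^{J_\epsilon}}-I\bigr)$. It is a standard fact (and presumably recorded in \cite{MSTBook} or \cite{ElLibro}) that for $f\in C^2_c(\R^N)$,
\begin{equation}\label{planrescale}
\frac{C_{J,2}}{\epsilon^2}\int_{\R^N}\bigl(f(y)-f(x)\bigr)J_\epsilon(y-x)\,dy \longrightarrow \Delta f(x)
\end{equation}
uniformly on compacts as $\epsilon\downarrow 0$, since $C_{J,2}$ was chosen precisely to normalise the second moment of $J$ so that the Taylor expansion of $f$ produces the Laplacian. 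Combining this with the homogeneous Dirichlet condition on $\partial_m\Omega$ (whose thickness shrinks to $0$ as $\epsilon\downarrow 0$, so that $\Omega_{m^{J_\epsilon}}\to\overline\Omega$), one gets convergence of the solutions $v^{J_\epsilon}$ of \eqref{CPNL1dir} (under the time change) to the solution $v_\Omega$ of the classical Dirichlet heat problem; integrating over $\Omega$ gives the stated convergence $\mathbb{Q}_\Omega^{m^{J_\epsilon}}\!\bigl(\tfrac{C_{J,2}}{\epsilon^2}t\bigr)\to\mathbb{Q}_\Omega(t)$.

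For the torsional rigidity identity \eqref{resc001}, I would argue directly from \eqref{Nspecheatcont01t02}: by Theorem~\ref{proptors01} applied to the random walk $m^{J_\epsilon}$,
\begin{equation}\label{planTscale}
\frac{\epsilon^2}{C_{J,2}}\,T_{m^{J_\epsilon}}(\Omega)=\frac{\epsilon^2}{C_{J,2}}\int_0^\infty \mathbb{Q}_\Omega^{m^{J_\epsilon}}(s)\,ds=\int_0^\infty \mathbb{Q}_\Omega^{m^{J_\epsilon}}\!\Bigl(\frac{C_{J,2}}{\epsilon^2}t\Bigr)\,dt,
\end{equation}
after the substitution $s=\frac{C_{J,2}}{\epsilon^2}t$. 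The pointwise limit of the integrand is $\mathbb{Q}_\Omega(t)$ by the first part, and $\int_0^\infty\mathbb{Q}_\Omega(t)\,dt=T(\Omega)$ by \eqref{exp6}; so it remains to justify passing the limit inside the integral in \eqref{planTscale}. For that I would produce a uniform-in-$\epsilon$ integrable dominating function: since $\lambda_{m^{J_\epsilon},2}(\Omega)\ge c\,\epsilon^2/C_{J,2}$ for small $\epsilon$ (from the scaled Poincaré inequality, which underlies \eqref{planrescale} converging to a positive classical eigenvalue), the semigroup estimate $\mathbb{Q}_\Omega^{m^{J_\epsilon}}(s)\le \nu(\Omega)e^{-\lambda_{m^{J_\epsilon},2}(\Omega)s}$ gives $\mathbb{Q}_\Omega^{m^{J_\epsilon}}\!\bigl(\tfrac{C_{J,2}}{\epsilon^2}t\bigr)\le |\Omega|\,e^{-ct}$, and dominated convergence finishes the proof.

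The main obstacle I anticipate is the boundary-layer issue in the first step: unlike the local Dirichlet problem, the nonlocal problem sees a collar $\partial_{m^{J_\epsilon}}\Omega$ of width $\mathrm{supp}(J_\epsilon)\sim\epsilon$ (or, for non-compactly-supported $J$, the whole complement weighted by $J_\epsilon$), so one must show that the contribution of this shrinking collar is negligible and that the nonlocal Dirichlet eigenvalues/eigenfunctions converge to the local ones — this is a nonlocal-to-local ($\Gamma$-convergence / Mosco-convergence of the associated Dirichlet forms) statement, and the cleanest route is to invoke the known convergence of the scaled energies $\frac{C_{J,2}}{\epsilon^2}\cdot\frac12\iint |\nabla g(x,y)|^2 J_\epsilon(y-x)\,dy\,dx \to \int_\Omega|\nabla g|^2$ on $W^{1,2}_0(\Omega)$, together with a compactness lemma giving convergence of minimizers; references of the type \cite{ElLibro} should supply exactly this. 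Once that convergence of stress functions is in hand, everything else is the bookkeeping with \eqref{Nspecheatcont01t02}, \eqref{exp6} and dominated convergence indicated above.
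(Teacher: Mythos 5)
Your proposal is correct and follows essentially the same route as the paper: the first limit is obtained by invoking the known nonlocal-to-local rescaling results of \cite{ElLibro} (extended via \cite{Ponce} to kernels with only a first moment), and the second follows from the representation $T_{m^{J_\epsilon}}(\Omega)=\int_0^\infty \mathbb{Q}_\Omega^{m^{J_\epsilon}}(s)\,ds$ after the change of variables $s=\frac{C_{J,2}}{\epsilon^2}t$ and an interchange of limit and integral. The paper states this interchange without detail, whereas you justify it explicitly via the uniform spectral-gap bound and the decay estimate $\mathbb{Q}_\Omega^{m^{J_\epsilon}}\bigl(\tfrac{C_{J,2}}{\epsilon^2}t\bigr)\le |\Omega|e^{-ct}$, which is a valid dominating function.
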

\begin{proof}
 The first part is consequence of the rescaling results proved in~\cite{ElLibro} (see also~\cite{redbook}) that also work if $\int_{\mathbb{R}^N}J(x)|x|dx<+\infty$ thanks to the general results given by A. Ponce in~\cite{Ponce}.  The second part is a consequence of the fact that  we can interchange the limit with the integral.
\end{proof}

By Theorems \ref{fkineq} and \ref{simetr01},  we can recover the classical {\it Saint-Venant inequality}:
\begin{theorem}[Saint-Venant inequality]\label{simetr02} Let $\Omega$ be a bounded domain in $\R^N$. Then,
$$T(\Omega)\le T(\Omega^*).$$
  And, more generally, for  any $j\ge 1$,
$$EM_j(\Omega)\le EM_j(\Omega^*).$$
\end{theorem}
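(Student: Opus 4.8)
The plan is to derive Theorem~\ref{simetr02} by letting the scaling parameter $\epsilon\downarrow 0$ in the nonlocal inequalities of Theorem~\ref{simetr01} and reading off the local statements from the rescaling results of Theorem~\ref{fkineq}. I would first fix once and for all a kernel $J\colon\R^N\to[0,+\infty[$ that is radial, non-increasing and compactly supported, normalized by $\int_{\R^N}J=1$ (for instance a continuous bump supported in $B_1$). This choice is exactly what makes the two theorems compatible: $J$ satisfies the hypotheses of Theorem~\ref{simetr01} (radial and non-increasing) and of Theorem~\ref{fkineq} (trivially $\int_{\R^N}J(x)|x|\,dx<+\infty$), and each rescaled kernel $J_\epsilon(x)=\epsilon^{-N}J(x/\epsilon)$ from~\eqref{fjai01} is again radial, non-increasing, compactly supported and of unit mass. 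For such a $J_\epsilon$ and any bounded domain $\Omega$ the standing assumptions of Section~3 hold (this is where compact support of $J_\epsilon$ and connectedness of $\Omega$ enter, and the $2$-Poincar\'e inequality holds by~\cite{ElLibro,MSTBook}), so $T_{m^{J_\epsilon}}(\Omega)$, $\mathbb{Q}_\Omega^{m^{J_\epsilon}}$ and $EM_j^{m^{J_\epsilon}}(\Omega)$ are defined; the same applies to the ball $\Omega^*$, which moreover satisfies $(\Omega^*)^*=\Omega^*$.

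For the torsional rigidity I would apply Theorem~\ref{simetr01}(2) to the metric random walk space $[\R^N,d,m^{J_\epsilon},\mathcal{L}^N]$, obtaining $T_{m^{J_\epsilon}}(\Omega)\le T_{m^{J_\epsilon}}(\Omega^*)$ for every $\epsilon>0$; multiplying by $\epsilon^2/C_{J,2}>0$ and letting $\epsilon\downarrow 0$, the limit formula~\eqref{resc001} of Theorem~\ref{fkineq}, used for both $\Omega$ and the ball $\Omega^*$, yields $T(\Omega)\le T(\Omega^*)$.

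For the general exit-moment inequality it is cleaner not to rescale $EM_j$ directly but to rescale the heat content. By Theorem~\ref{simetr01}(1) one has $\mathbb{Q}_\Omega^{m^{J_\epsilon}}(s)\le\mathbb{Q}_{\Omega^*}^{m^{J_\epsilon}}(s)$ for all $s\ge 0$ and all $\epsilon>0$; evaluating at $s=(C_{J,2}/\epsilon^2)t$ and letting $\epsilon\downarrow 0$, the first limit in Theorem~\ref{fkineq} (again for both $\Omega$ and $\Omega^*$) produces the local pointwise inequality $\mathbb{Q}_\Omega(t)\le\mathbb{Q}_{\Omega^*}(t)$ for every $t\ge 0$. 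Since $\mathbb{Q}_\Omega$ and $\mathbb{Q}_{\Omega^*}$ decay exponentially (a standard consequence of the local Faber--Krahn/Poincar\'e inequality, e.g. $\mathbb{Q}_\Omega(t)\le|\Omega|\,e^{-\lambda_1(\Omega)t}$), all the integrals below converge, and multiplying by $j\,t^{j-1}$ and integrating over $(0,\infty)$ gives, by~\eqref{exp5} (and~\eqref{exp6} when $j=1$),
\[
EM_j(\Omega)=j\int_0^\infty t^{j-1}\mathbb{Q}_\Omega(t)\,dt\le j\int_0^\infty t^{j-1}\mathbb{Q}_{\Omega^*}(t)\,dt=EM_j(\Omega^*),\qquad j\ge 1,
\]
which contains $T(\Omega)\le T(\Omega^*)$ as the case $j=1$.

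Once Theorems~\ref{fkineq} and~\ref{simetr01} are available the argument is short, and the only steps needing care are the two limit passages in $\epsilon$. In the heat-content route these are covered by the pointwise rescaling convergence of Theorem~\ref{fkineq}, which already absorbs the interchange of limit and integral through the results of~\cite{ElLibro,redbook,Ponce}; the subsequent integration against $t^{j-1}\,dt$ is then performed at the local level, where $\mathbb{Q}_\Omega$ is a fixed exponentially decaying function, so no uniformity in $\epsilon$ is required there. Had one instead insisted on rescaling $EM_j$ itself, the main obstacle would be the construction of a uniform-in-$\epsilon$ integrable majorant for $s\mapsto s^{j-1}\mathbb{Q}_\Omega^{m^{J_\epsilon}}((C_{J,2}/\epsilon^2)s)$; this is available from the uniform exponential bound $\mathbb{Q}_\Omega^{m^{J_\epsilon}}(t)\le\mathcal{L}^N(\Omega)\,e^{-\lambda_{m^{J_\epsilon},2}(\Omega)t}$ together with the boundedness below of the rescaled eigenvalues $(C_{J,2}/\epsilon^2)\,\lambda_{m^{J_\epsilon},2}(\Omega)$ as $\epsilon\downarrow 0$, but the route via Theorem~\ref{simetr01}(1) above sidesteps it entirely.
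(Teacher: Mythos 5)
Your proposal is correct and follows essentially the same route as the paper, which obtains the theorem precisely by combining the nonlocal symmetrization inequalities of Theorem~\ref{simetr01} for the rescaled kernels $J_\epsilon$ with the limit formulas of Theorem~\ref{fkineq}. Your detour through the pointwise heat-content inequality $\mathbb{Q}_\Omega(t)\le\mathbb{Q}_{\Omega^*}(t)$ followed by integration against $j\,t^{j-1}\,dt$ is a sensible way to make explicit the $EM_j$ case, which the paper leaves implicit since Theorem~\ref{fkineq} only states the rescaled convergence of the heat content and of the torsional rigidity.
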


\section{The particular case of a weighted graph}

 In this section we describe an iterative numerical method to get the torsional rigidity  of a non-trivial subset  of a weighted discrete graph. It is not our intention to give numerical results. We only want to show that~\eqref{rem001} and~\eqref{specheatcont01t02}  allow to use such iterative method.

Consider a weighted discrete graph $[V(G),\mathcal{B},m^G,\nu_G]$ as in Example~\ref{example.graphs} and a $\Omega$ a finite connected subset of $V(G)$.
Let us write $\Omega=\{x_1,x_2,...,x_N\}$ and $\partial_{m^{G}}\Omega=\{x_{N+1},...,x_M\}$, $x_i\ne x_j$ for $i\neq j$. Set $w_{ij}$ the weights between $x_i$ and $x_j$ (remember that $w_{ij}=0$ if $x_i\not\sim x_j$).

Set the weight of each $x_i\in\Omega$:
$$\displaystyle d_i=\sum_{j=1}^Mw_{ij},\  i=1,2,...,N.$$
Then, from~\eqref{rem001} and~\eqref{specheatcont01t02}, the following iterative scheme gives an approximation $T(n)$  of the torsion:
$$\left\{\begin{array}{ll}
\displaystyle T(0)=\sum_{i=1}^Nd_i, &\hbox{ (the term $g_{m^G,\Omega}(0)$)}
\\[16pt]
\displaystyle f^1_i=\sum_{j=1}^Nw_{i,j},\ i=1,2,...,N,
\\[16pt]
\displaystyle g(1)=\sum_{i=1}^Nf_i^1,& \hbox{ (the term $g_{m^G,\Omega}(1)$})
\\[16pt]
\displaystyle T(1)=T(0)+g(1),
\\[16pt]
\hbox{for } n\ge2:
\\[8pt]
\displaystyle \qquad f^{n}_i=\sum_{j=1}^N\frac{1}{d_j}f_j^{n-1}w_{i,j},\ i=1,2,...,N,
\\[16pt]
\displaystyle \qquad g(n)=\sum_{i=1}^Nf_i^n,& \hbox{ (the term $g_{m^G,\Omega}(n)$})
\\[16pt]
\displaystyle \qquad T(n)=T(n-1)+g(n).& \hbox{ ($\displaystyle \lim_nT(n)=T_{m^G}(\Omega)$})
\end{array}\right.$$

 From~\eqref{tl01}  we have that
$$\left|T_{m^G}(\Omega)-T(n)\right|= O\left((1-\lambda_{m,2}(\Omega))^{n+1}\right).$$

\section{The $m$-$p$-torsional rigidiy}

Brasco in \cite{Brasco1}, for $p >1$, defines the {\it $p$-torsional rigidity}  of the set $D$ as
$$T_p(D):=  \max_{v\in W^{1,2}_0(D)\setminus\{0\}}\frac{\displaystyle \left( \int_\Omega \vert v \vert dx \right)^{p}}{\displaystyle \int_D \vert\nabla v \vert^p dx}.$$

In \cite[Proposition 2.2]{Brasco1}, it is proved that
\begin{equation}\label{BasrcoR1}
T_p(D) = \left(\int_D v_D dx \right)^{p-1},
\end{equation}
where $v_D$ is the unique weak solution of the problem
\begin{equation}\label{torsioneqforplocal}
\left\{\begin{array}{ll}
-\Delta_p v_D =1&\hbox{in } D,\\[10pt]
v_D(x)=0&\hbox{on }\partial D.
\end{array}\right.
\end{equation}

Now we are going to get the nonlocal version of equation \eqref{BasrcoR1}.

In this section we will we assume that   $1 < p < \infty$, $\Omega \in \mathcal{B}$,  $0 < \nu(\Omega) < \nu(X)$ and $\Omega$ satisfies a $p$-Poincar\'{e} inequality (see~\eqref{PoincareIneq2}).

 From the reversibility of $\nu$ respect to $m$, we have the following {\it integration by parts formula}
 \begin{equation}\label{INTBYPart} \begin{array}{ll}
 -\displaystyle\int_{\Omega_m \times \Omega_m} \vert \nabla f(x,y) \vert^{p-2} \nabla f(x,y) g(x) dm_x(y) d\nu(x) \\[10pt] = \displaystyle\frac12 \int_{\Omega_m \times \Omega_m} \vert \nabla f(x,y) \vert^{p-2} \nabla f(x,y) \nabla g(x,y) dm_x(y) d\nu(x), \end{array}
 \end{equation}
 if $f, g \in L^p(\Omega_m, \nu)$.

We give the following definition of the homogeneous Dirichlet problem for the $m$-$p$-Laplacian.

\begin{definition}{\rm   Given $g \in L^1(\Omega, \nu)$, we say that $f \in L^p_0(\Omega_m, \nu)$ is a solution of problem
\begin{equation}\label{Dirichletp}
\left\{\begin{array}{ll}
-\Delta_{m,p} f =g&\hbox{in } \Omega,\\[10pt]
f(x)=0&\hbox{on }\partial_m\Omega;
\end{array}\right.
\end{equation}
if it verifies
$$\left\{\begin{array}{ll}
-\hbox{div}_m(|\nabla f|^{p-2}\nabla f)(x)=g&\hbox{in } \Omega,\\[10pt]
f(x)=0&\hbox{on }\partial_m\Omega;
\end{array}\right.$$
that is,
$$
\left\{\begin{array}{ll}\displaystyle
-\int_{\Omega_m}   |f(y)-f(x)|^{p-2}(f(y)-f(x)) dm_x(y)= g(x),
&  x\in  \Omega,
    \\ \\
f(x)=0,&x\in \partial_m\Omega.
\end{array}\right.
$$
}
\end{definition}

  Existence and  uniqueness  are  given in \cite{ST0} (see also~\cite{MSTBook}). Nevertheless, and for the sake of completeness, we give  the next result with a  different proof.

\begin{theorem}\label{EUp}
There is a unique solution $f_{\Omega,p} \geq 0$  of the  homogenous Dirichlet problem for the $m$-$p$-Laplacian,
\begin{equation}\label{mpLaplace}
\left\{\begin{array}{ll}\displaystyle
-\int_{\Omega_m}   |f_{\Omega,p}(y)-f_{\Omega,p}(x)|^{p-2}(f_{\Omega,p}(y)-f_{\Omega,p}(x)) dm_x(y)= 1,
&  x\in  \Omega,
    \\ \\
f_{\Omega,p}(x)=0,&x\in \partial_m\Omega.
\end{array}\right.
\end{equation}
Moreover, $f_{\Omega,p}$ is the only minimizer of the variational problem
\begin{equation}\label{VarPro2}
\min_{f \in  L_0^2(\Omega_m, \nu) \setminus \{ 0 \}} \mathcal{F}_{m,p}(f),
\end{equation}
where
$$\mathcal{F}_{m,p}(f):= \frac{1}{2p} \int_{\Omega_m \times \Omega_m} \vert \nabla f(x,y) \vert^p d(\nu\otimes m_x)(x,y) - \int_{\Omega} f(x) d\nu(x).$$
And,
\begin{equation}\label{igualdaB}
\int_{\Omega}  f_{\Omega,p}(x) d\nu(x) = \frac{1}{2} \int_{\Omega_m \times \Omega_m} \vert \nabla  f_{\Omega,p}(x,y) \vert^p d(\nu\otimes m_x)(x,y).
\end{equation}
\end{theorem}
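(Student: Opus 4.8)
The plan is to treat the three assertions—existence/uniqueness of the solution to~\eqref{mpLaplace}, its characterization as the minimizer of $\mathcal{F}_{m,p}$, and the energy identity~\eqref{igualdaB}—by the direct method of the calculus of variations, and then extract the PDE as the Euler--Lagrange equation.

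\emph{Step 1: the functional is well defined, coercive and strictly convex on $L^p_0(\Omega_m,\nu)$.} First I would note that on $L^p_0(\Omega_m,\nu)$ the quantity $f\mapsto\bigl(\tfrac12\iint_{\Omega_m\times\Omega_m}|\nabla f|^p\,d(\nu\otimes m_x)\bigr)^{1/p}$ is a seminorm, and that by the assumed $p$-Poincar\'e inequality~\eqref{PoincareIneq2} it is in fact a norm equivalent to $\|\cdot\|_{L^p(\Omega,\nu)}$ on that space; since $\nu(\Omega_m)<\infty$, the linear term $\int_\Omega f\,d\nu$ is continuous there. Hence $\mathcal{F}_{m,p}$ is well defined, and the Poincar\'e inequality together with Young's inequality gives $\mathcal{F}_{m,p}(f)\ge \tfrac{1}{2p}\,c\,\|f\|_{L^p(\Omega,\nu)}^p - C\|f\|_{L^p(\Omega,\nu)}$, which is coercive because $p>1$. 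Convexity of $t\mapsto|t|^p$ gives convexity of the first term, and strict convexity on the subspace where $\nabla f\not\equiv 0$ — combined with the Poincar\'e inequality which forbids $\nabla f\equiv 0$ for $f\neq 0$ — yields strict convexity of $\mathcal{F}_{m,p}$; the linear term does not affect this. So a minimizer, if it exists, is unique.

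\emph{Step 2: existence of a minimizer.} Take a minimizing sequence $(f_n)$; by coercivity it is bounded in $L^p_0(\Omega_m,\nu)$, hence (reflexivity, $1<p<\infty$) has a subsequence converging weakly to some $f\in L^p_0(\Omega_m,\nu)$ — the constraint $f=0$ on $\partial_m\Omega$ is preserved since $L^p_0$ is a closed subspace. The gradient map $f\mapsto\nabla f$ is linear and bounded from $L^p(\Omega_m,\nu)$ to $L^p(\Omega_m\times\Omega_m,\nu\otimes m_x)$, so $\nabla f_n\rightharpoonup\nabla f$ weakly there; the first term of $\mathcal{F}_{m,p}$ is convex and strongly continuous, hence weakly lower semicontinuous, while the linear term passes to the limit. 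Therefore $\mathcal{F}_{m,p}(f)\le\liminf\mathcal{F}_{m,p}(f_n)=\inf\mathcal{F}_{m,p}$, so $f$ is the (unique) minimizer; call it $f_{\Omega,p}$. Nonnegativity follows by a standard truncation argument: replacing $f$ by $|f|$ does not increase $\tfrac12\iint|\nabla f|^p$ (since $\bigl||a|-|b|\bigr|\le|a-b|$, exactly the inequality the paper already invokes before~\eqref{deflmo}) and does not decrease $\int_\Omega f\,d\nu$, hence cannot increase $\mathcal{F}_{m,p}$; by uniqueness $f_{\Omega,p}=|f_{\Omega,p}|\ge 0$. (Alternatively one can quote the comparison principle from~\cite{ST0}.)

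\emph{Step 3: Euler--Lagrange equation and the energy identity.} For $\varphi\in L^p_0(\Omega_m,\nu)$ the function $s\mapsto\mathcal{F}_{m,p}(f_{\Omega,p}+s\varphi)$ is differentiable at $s=0$ — differentiating under the integral sign is justified because $|t|^p$ is $C^1$ with derivative $p|t|^{p-2}t$ and the relevant dominating function is integrable on the finite measure space $\nu\otimes m_x$ restricted to $\Omega_m\times\Omega_m$ — and its vanishing gives
$$\frac12\iint_{\Omega_m\times\Omega_m}|\nabla f_{\Omega,p}(x,y)|^{p-2}\nabla f_{\Omega,p}(x,y)\,\nabla\varphi(x,y)\,dm_x(y)\,d\nu(x)=\int_\Omega\varphi(x)\,d\nu(x)$$
for all such $\varphi$. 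Applying the integration by parts formula~\eqref{INTBYPart} rewrites the left-hand side as $-\iint_{\Omega_m\times\Omega_m}|\nabla f_{\Omega,p}|^{p-2}\nabla f_{\Omega,p}(x,y)\,\varphi(x)\,dm_x(y)\,d\nu(x)$; since $\varphi$ ranges over all of $L^p_0(\Omega_m,\nu)$, i.e. $\varphi(x)$ is arbitrary for $x\in\Omega$, we deduce that $-\int_{\Omega_m}|f_{\Omega,p}(y)-f_{\Omega,p}(x)|^{p-2}(f_{\Omega,p}(y)-f_{\Omega,p}(x))\,dm_x(y)=1$ for $x\in\Omega$, which is~\eqref{mpLaplace}. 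Conversely any solution of~\eqref{mpLaplace} satisfies the weak formulation after pairing with $\varphi$ and using~\eqref{INTBYPart}, hence is a critical point of the convex functional $\mathcal{F}_{m,p}$ and therefore equals $f_{\Omega,p}$; this gives uniqueness for~\eqref{mpLaplace}. Finally, taking $\varphi=f_{\Omega,p}$ in the weak formulation and using $\nabla f_{\Omega,p}(x,y)\cdot\nabla f_{\Omega,p}(x,y)=|\nabla f_{\Omega,p}(x,y)|^2$ yields
$$\int_\Omega f_{\Omega,p}\,d\nu=\frac12\iint_{\Omega_m\times\Omega_m}|\nabla f_{\Omega,p}(x,y)|^{p}\,d(\nu\otimes m_x)(x,y),$$
which is~\eqref{igualdaB}.

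\emph{Main obstacle.} The only genuinely delicate point is Step 2, the lower semicontinuity: one must be sure that the gradient operator is weakly continuous into the correct $L^p$ space so that convexity of $\int|\cdot|^p$ can be applied — this hinges on the finiteness $\nu(\Omega_m)<\infty$ and on the fact that $m_x$ are probability measures, making $\nabla\colon L^p(\Omega_m,\nu)\to L^p(\Omega_m\times\Omega_m,\nu\otimes m_x)$ bounded. A secondary technical care is needed in justifying differentiation under the integral in Step 3, but given the finite total mass and the $C^1$ regularity of $t\mapsto|t|^p$ this is routine. Everything else is a direct, self-contained application of the direct method, and the integration-by-parts formula~\eqref{INTBYPart} supplied in the paper does the bookkeeping between the variational and pointwise formulations.
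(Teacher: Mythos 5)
Your proposal is correct and follows essentially the same route as the paper: the direct method (coercivity from the $p$-Poincar\'e inequality plus Young, weak compactness in the reflexive space $L^p_0$, weak lower semicontinuity of the convex functional), strict convexity for uniqueness, nonnegativity via $\mathcal{F}_{m,p}(|f|)\le\mathcal{F}_{m,p}(f)$, the Euler--Lagrange equation obtained from the first variation together with the integration by parts formula, and the energy identity by testing with $f_{\Omega,p}$ itself. The only cosmetic differences are that the paper computes the first variation by one-sided difference quotients ($\lambda\downarrow 0$ and $\lambda<0$) rather than differentiating under the integral sign, and your explicit remark that any solution of the equation is a critical point of the convex functional (hence coincides with the minimizer) is a welcome clarification the paper leaves implicit.
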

\begin{proof}    First note that $\mathcal{F}_{m,p}$  is convex and lower semicontinuous in $L^p(\Omega,\nu)$, thus weakly lower semicontinuous (see~\cite[Corollary 3.9]{BrezisAF}).
Set $$\theta := \inf_{f \in L_0^2(\Omega_m, \nu) \setminus \{ 0 \}} \mathcal{F}_{m,p}(f),$$ and let $\{ f_n \}$ be a minimizing sequence. Then,
$$\theta = \lim_{n \to \infty} \mathcal{F}_{m,p}(f_n)  \quad \hbox{and} \quad K:= \sup_{n \in \NN} \mathcal{F}_{m,p}(f_n)  < + \infty\,.$$
Since $\Omega_m$ satisfies a the Poincar\'{e} inequality \eqref{PoincareIneq2}, by Young's inequality, we have
$$\displaystyle \lambda\int_\Omega \left|f_n(x) \right|^p \, {d\nu(x)} \leq \int_{\Omega_m \times \Omega_m} \vert \nabla f_n(x,y) \vert^p d(\nu\otimes m_x)(x,y)$$
$$= 2p \mathcal{F}_{m,p}(f_n) + \int_{\Omega} f_n(x) d \nu(x) \leq 2pK + \int_\Omega \vert f_n (x)  \vert d\nu(x) $$ $$ \leq 2pK + \frac{\lambda}{2} \int_\Omega \left|f_n(x) \right|^p \, {d\nu(x)} + \left(\frac{\lambda}{2} \right)^{-\frac{1}{p-1}} \nu(\Omega).$$
Therefore,  we obtain that
$$\int_{\Omega} |f_n(x)|^p \, {d\nu(x)} \leq C \quad \forall n \in \NN.$$
Hence, up to a subsequence, we have
$$f_n \rightharpoonup f_{\Omega,p} \ \ \ \hbox{in }   L_0^p(\Omega_m, \nu).$$
Furthermore, using the weak lower semicontinuity of the functional $\mathcal{F}_{m,p}$, we get
$$\mathcal{F}_{m,p}(f_{\Omega,p}) = \inf_{f \in L_0^2(\Omega_m, \nu) \setminus \{ 0 \}} \mathcal{F}_{m,p}(f).$$
 Since the functional $\mathcal{F}_{m,p}$ is strictly convex, we have that $f_{\Omega,p}$ is the unique minimizer, and since $ \mathcal{F}_{m,p}(\vert f \vert ) \leq  \mathcal{F}_{m,p}(f)$, we have that $f_{\Omega,p} \geq 0$.

Thus, given $\lambda >0$ and $w \in L_0^p(\Omega,\nu)$ , we have
$$0 \leq \frac{\mathcal{F}_{m,p}(f_{\Omega,p} + \lambda w)- \mathcal{F}_{m,p}(f_{\Omega,p})}{\lambda} $$
or, equivalently,
$$0\leq \frac{1}{\lambda} \Big[ \frac{1}{2p} \int_{\Omega_m \times \Omega_m} \vert \nabla (f_{\Omega,p} + \lambda w)(x,y) \vert^p d(\nu\otimes m_x)(x,y) - \int_{\Omega} (f_{\Omega,p} + \lambda w)(x) d\nu(x) $$ $$ - \left( \frac{1}{2p} \int_{\Omega_m \times \Omega_m} \vert \nabla f_{\Omega,p}(x,y) \vert^p d(\nu\otimes m_x)(x,y) - \int_{\Omega} f_{\Omega,p}(x) d\nu(x) \right)\Big].$$
Now, since $p >1$, we pass to the limit as $\lambda \downarrow 0$ to obtain
$$0 \leq \frac12\int_{\Omega_m \times \Omega_m} \vert \nabla f_{\Omega,p}(x,y) \vert^{p-2}  \nabla f_{\Omega,p}(x,y) \nabla w(x,y) d(\nu\otimes m_x)(x,y) - \int_\Omega w(x) d \nu(x).$$

Taking $\lambda <0$ and proceeding as above we obtain the opposite inequality. Consequently, we conclude that
$$0 = \frac12\int_{\Omega_m \times \Omega_m} \vert \nabla f_{\Omega,p}(x,y) \vert^{p-2}  \nabla f_{\Omega,p}(x,y) \nabla w(x,y) d(\nu\otimes m_x)(x,y) - \int_\Omega w(x) d \nu(x)$$ $$= -\int_{\Omega_m}\int_{\Omega_m} \vert \nabla f_{\Omega,p}(x,y) \vert^{p-2}  \nabla f_{\Omega,p}(x,y)  dm_x(y)w(x)d\nu(x) - \int_\Omega w(x) d \nu(x),$$
which shows that $f_{\Omega,p}$ is  solution of \eqref{mpLaplace}.

 Finally, taking $w=f_{\Omega,p}$ in the above first equation we get~\eqref{igualdaB}.
\end{proof}

\begin{definition}{\rm  We call to $f_{\Omega,p}$ as the {\it $p$-torsional function} of $\Omega$,   and we define the {\it $m$-$p$-torsional rigidity} of $\Omega$ as
$$T_{m,p}(\Omega):= \left(\int_\Omega f_{\Omega,p}  d \nu \right)^{p-1}.$$

Note that $T_{m}(\Omega) = T_{m,2}(\Omega)$.
}
\end{definition}

\begin{theorem}\label{Charact1}
 We have
\begin{equation}\label{NonBasrcoR1}
T_{m,p}(\Omega) =  \max_{g\in L_0^p(\Omega_m, \nu) \setminus \{ 0 \}}
\frac{\displaystyle\left(\int_\Omega |g|d\nu \right)^p}{\displaystyle  \frac{1}{2}\int_{\Omega_m\times \Omega_m}|\nabla g(x,y)|^p \, d(\nu\otimes m_x)(x,y)},
\end{equation}
and  the maximum is attained at $f_{\Omega,p}$.
\end{theorem}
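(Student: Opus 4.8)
The plan is to establish the variational characterization \eqref{NonBasrcoR1} by a two-sided argument, exactly paralleling the classical computation behind \eqref{BasrcoR1}. Write $\mathcal{E}_p(g):= \frac12\int_{\Omega_m\times\Omega_m}|\nabla g(x,y)|^p\,d(\nu\otimes m_x)(x,y)$ for the $p$-Dirichlet form, so the claim is that $T_{m,p}(\Omega)=\sup_g \left(\int_\Omega|g|\,d\nu\right)^p/\mathcal{E}_p(g)$, with the supremum attained at the $p$-torsional function $f_{\Omega,p}$ of Theorem \ref{EUp}. First I would fix an arbitrary nonzero $g\in L^p_0(\Omega_m,\nu)$ and test the weak formulation \eqref{mpLaplace} of $f_{\Omega,p}$ against $g$ (or $|g|$, using that $f_{\Omega,p}\ge 0$): via the integration by parts formula \eqref{INTBYPart} this yields
\[
\int_\Omega |g|\,d\nu = \frac12\int_{\Omega_m\times\Omega_m}|\nabla f_{\Omega,p}(x,y)|^{p-2}\nabla f_{\Omega,p}(x,y)\,\nabla |g|(x,y)\,d(\nu\otimes m_x)(x,y).
\]
Applying Hölder's inequality (with exponents $p$ and $p/(p-1)$) to the right-hand side, followed by the elementary inequality $\big||a|-|b|\big|\le|a-b|$ to replace $\nabla|g|$ by $\nabla g$, gives
\[
\int_\Omega |g|\,d\nu \;\le\; \mathcal{E}_p(f_{\Omega,p})^{\frac{p-1}{p}}\,\mathcal{E}_p(g)^{\frac1p}.
\]
Raising to the $p$-th power and rearranging, $\left(\int_\Omega|g|\,d\nu\right)^p/\mathcal{E}_p(g)\le \mathcal{E}_p(f_{\Omega,p})^{p-1}$; invoking identity \eqref{igualdaB}, which says $\mathcal{E}_p(f_{\Omega,p})=\int_\Omega f_{\Omega,p}\,d\nu$, the right-hand side is exactly $\left(\int_\Omega f_{\Omega,p}\,d\nu\right)^{p-1}=T_{m,p}(\Omega)$. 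This proves the inequality $\le$ in \eqref{NonBasrcoR1}, valid for every admissible $g$.

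For the reverse inequality it suffices to check that equality holds when $g=f_{\Omega,p}$: then the quotient equals $\left(\int_\Omega f_{\Omega,p}\,d\nu\right)^p/\mathcal{E}_p(f_{\Omega,p})$, and using \eqref{igualdaB} once more this is $\left(\int_\Omega f_{\Omega,p}\,d\nu\right)^p/\left(\int_\Omega f_{\Omega,p}\,d\nu\right)=\left(\int_\Omega f_{\Omega,p}\,d\nu\right)^{p-1}=T_{m,p}(\Omega)$. Here one should note $\int_\Omega f_{\Omega,p}\,d\nu>0$, which follows from $f_{\Omega,p}\ge0$ together with the fact that $f_{\Omega,p}\equiv0$ would contradict the equation in \eqref{mpLaplace} (the right-hand side is $1$ on $\Omega$ and $\nu(\Omega)>0$); hence $\mathcal{E}_p(f_{\Omega,p})>0$ as well and the quotient is well defined. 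Combining the two directions gives both the formula and the attainment statement.

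The only genuinely delicate point is the Hölder step: one must be sure $f_{\Omega,p}\in L^p_0(\Omega_m,\nu)$ makes $|\nabla f_{\Omega,p}|^{p-2}\nabla f_{\Omega,p}$ lie in $L^{p/(p-1)}(\nu\otimes m_x)$ — i.e. that $\mathcal{E}_p(f_{\Omega,p})<\infty$ — but this is immediate from Theorem \ref{EUp}, since $f_{\Omega,p}$ minimizes $\mathcal{F}_{m,p}$ and hence has finite $p$-energy. A minor subtlety is justifying the use of $|g|$ as a test function and the legitimacy of the integration by parts \eqref{INTBYPart} at the regularity available; both are routine given the $p$-Poincaré inequality assumed on $\Omega$ and the stated hypotheses, so no substantial obstacle remains. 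Finally, specializing $p=2$ recovers Theorem \ref{$T_m$formula}, as announced.
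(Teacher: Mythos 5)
Your proposal is correct and follows essentially the same route as the paper's proof: test the weak formulation \eqref{mpLaplace} with $|g|$, apply H\"older's inequality together with $\big|\nabla|g|\big|\le|\nabla g|$, and use the energy identity \eqref{igualdaB} twice (once to bound the quotient by $T_{m,p}(\Omega)$ for arbitrary $g$, once to verify equality at $g=f_{\Omega,p}$). The constants in your H\"older step check out, and your extra remarks on $\int_\Omega f_{\Omega,p}\,d\nu>0$ and the finiteness of the $p$-energy are harmless refinements of what the paper leaves implicit.
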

\begin{proof}  By~\eqref{igualdaB},
\begin{equation}\label{eq01paratN}
\frac{1}{2}\int_{\Omega_m}\int_{\Omega_m}|\nabla f_{\Omega,p}(x,y)|^p \, d(\nu\otimes m_x)(x,y)=\int_\Omega f_{\Omega,p}(x) d\nu(x).
\end{equation}
Therefore,
$$\displaystyle T_{m,p}(\Omega)=\left(\int_\Omega f_{\Omega,p} d\nu \right)^{p-1} = \frac{\displaystyle\left(\int_\Omega f_{\Omega,p} d\nu\right)^p}{\displaystyle \frac12\int_{\Omega_m\times \Omega_m}|\nabla f_\Omega(x,y)|^pdm_x(y)d\nu(x)}.$$

Let $g\in L_0^p(\Omega_m, \nu)$, $g \not=0$. Since $f_{\Omega,p}$ is a solution of Problem~\eqref{mpLaplace},
$$\int_\Omega |g| d\nu =\frac12 \int_{\Omega_m}\int_{\Omega_m} \vert \nabla f_{\Omega,p}(x,y) \vert^{p-2}  \nabla f_{\Omega,p}(x,y) \nabla |g|(x,y) d(\nu\otimes m_x)(x,y).$$
Then, by H\"{o}lder's inequality,
$$ \begin{array}{c}\displaystyle\int_\Omega |g| d\nu \le \frac{1}{2}\left(\int_{\Omega_m}\int_{\Omega_m}\vert \nabla f_{\Omega,p}(x,y) \vert^{p} \, d(\nu\otimes m_x)(x,y)\right)^{1/p'}
\\ \\ \displaystyle
\times\left(\int_{\Omega_m}\int_{\Omega_m}\vert \nabla g(x,y) \vert^p \, d(\nu\otimes m_x)(x,y)\right)^{1/p}.
\end{array}$$
Then, from \eqref{igualdaB},
$$\int_\Omega |g| d\nu \le \frac{1}{2}\left(2\int_\Omega f_{\Omega,p}(x) d\nu(x)\right)^{1/p'}\left(\int_{\Omega_m}\int_{\Omega_m}\vert \nabla g(x,y) \vert^p \, d(\nu\otimes m_x)(x,y)\right)^{1/p}$$
$$=\frac{1}{2^{1/p}} \left( T_{m,p}(\Omega) \right)^{1/p}\left(\int_{\Omega_m}\int_{\Omega_m}\vert \nabla g(x,y) \vert^p \, d(\nu\otimes m_x)(x,y)\right)^{1/p}.$$
Thus,
$$\displaystyle T_{m,p}(\Omega)
\geq \frac{\displaystyle\left(\int_\Omega |g| d\nu\right)^p}{\displaystyle \frac12\int_{\Omega_m\times \Omega_m}|\nabla g(x,y)|^pdm_x(y)d\nu(x)},$$
and consequently \eqref{NonBasrcoR1} holds.
\end{proof}

 We now define, for $p\ge 1$,
\begin{equation}\label{RayleighC}
\begin{array}{c}\displaystyle
\lambda_{m,p}(\Omega):= \inf_{f \in  L_0^p(\Omega_m, \nu) \setminus \{ 0 \}}  \frac{ \displaystyle\frac{1}{2} \displaystyle\int_{\Omega_m \times \Omega_m} \vert \nabla f(x,y) \vert^p d(\nu\otimes m_x)(x,y)}{\displaystyle\int_\Omega \vert f (x) \vert^p d\nu(x)}
\\ \\
\displaystyle
=\inf_{\hbox{\tiny$\begin{array}{c}f \in  L_0^p(\Omega_m, \nu) \setminus \{ 0 \}\\ f\ge 0\hbox{ on }\Omega\end{array}$}}  \frac{ \displaystyle\frac{1}{2} \displaystyle\int_{\Omega_m \times \Omega_m} \vert \nabla f(x,y) \vert^p d(\nu\otimes m_x)(x,y)}{\displaystyle\int_\Omega \vert f (x) \vert^p d\nu(x)}.
\end{array}
\end{equation}

As a consequence of the above result we have:

\begin{corollary}\label{corlambdatorsionlocal1nlp}   For $p>1$  we have
\begin{equation}\label{lambdatorsionlocal1nlp}\lambda_{m,p}(\Omega)\le \frac{\nu(\Omega)^{p-1}}{T_{m,p}(\Omega)}.
\end{equation}
\end{corollary}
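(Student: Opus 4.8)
The plan is to derive the bound directly from the variational characterization of $T_{m,p}(\Omega)$ proved in Theorem~\ref{Charact1}, together with the definition~\eqref{RayleighC} of $\lambda_{m,p}(\Omega)$, by testing both quotients against the same distinguished function, namely the $p$-torsional function $f_{\Omega,p}$. First I would recall that, by Theorem~\ref{EUp} and the definition of $T_{m,p}(\Omega)$, the function $f_{\Omega,p} \ge 0$ satisfies the identity~\eqref{igualdaB}, so that
$$\frac12 \int_{\Omega_m \times \Omega_m} |\nabla f_{\Omega,p}(x,y)|^p \, d(\nu \otimes m_x)(x,y) = \int_\Omega f_{\Omega,p}(x)\, d\nu(x),$$
and hence $T_{m,p}(\Omega) = \left(\int_\Omega f_{\Omega,p}\, d\nu\right)^{p-1}$ with the numerator in~\eqref{NonBasrcoR1} realized at $g = f_{\Omega,p}$.

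Next I would plug $f = f_{\Omega,p}$ into the Rayleigh quotient~\eqref{RayleighC}, which is legitimate since $f_{\Omega,p} \in L^p_0(\Omega_m,\nu)$, $f_{\Omega,p} \ge 0$ on $\Omega$, and $f_{\Omega,p} \neq 0$. This gives
$$\lambda_{m,p}(\Omega) \le \frac{\frac12 \int_{\Omega_m \times \Omega_m} |\nabla f_{\Omega,p}(x,y)|^p \, d(\nu \otimes m_x)(x,y)}{\int_\Omega f_{\Omega,p}(x)^p \, d\nu(x)} = \frac{\int_\Omega f_{\Omega,p}\, d\nu}{\int_\Omega f_{\Omega,p}^p \, d\nu},$$
using~\eqref{igualdaB} in the last step. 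It then remains to bound the right-hand side by $\nu(\Omega)^{p-1}/T_{m,p}(\Omega)$, i.e. to show $\left(\int_\Omega f_{\Omega,p}\, d\nu\right)^p \le \nu(\Omega)^{p-1} \int_\Omega f_{\Omega,p}^p\, d\nu$. This is exactly Jensen's (or Hölder's) inequality applied on the finite measure space $(\Omega, \nu\res\Omega)$ with the convex function $t \mapsto t^p$: $\left(\frac{1}{\nu(\Omega)}\int_\Omega f_{\Omega,p}\, d\nu\right)^p \le \frac{1}{\nu(\Omega)}\int_\Omega f_{\Omega,p}^p\, d\nu$, which rearranges to the claim since $\nu(\Omega) < \infty$. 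Combining the two displays yields
$$\lambda_{m,p}(\Omega) \le \frac{\int_\Omega f_{\Omega,p}\, d\nu}{\int_\Omega f_{\Omega,p}^p\, d\nu} \le \frac{\nu(\Omega)^{p-1}}{\left(\int_\Omega f_{\Omega,p}\, d\nu\right)^{p-1}} = \frac{\nu(\Omega)^{p-1}}{T_{m,p}(\Omega)},$$
which is~\eqref{lambdatorsionlocal1nlp}.

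There is no serious obstacle here; the whole argument is a one-line chain of a variational comparison plus Jensen. The only point requiring a word of care is that $f_{\Omega,p}$ is admissible in~\eqref{RayleighC}, which is guaranteed by Theorem~\ref{EUp} ($f_{\Omega,p} \in L^p_0(\Omega_m,\nu)$, nonnegative and, by the $p$-Poincaré inequality ensuring $\lambda_{m,p}(\Omega)>0$, necessarily nonzero), and that $0 < \nu(\Omega) < \infty$ so that the normalization in Jensen's inequality is valid. One could equivalently phrase the final step via Hölder with exponents $p$ and $p' = p/(p-1)$ applied to $f_{\Omega,p} \cdot 1$ on $\Omega$; I would present whichever is shorter.
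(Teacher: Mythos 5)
Your proof is correct and is essentially the paper's own argument: both test with the $p$-torsional function $f_{\Omega,p}$, use the identity \eqref{igualdaB} (equivalently the formula of Theorem~\ref{Charact1} evaluated at $f_{\Omega,p}$), and apply H\"older/Jensen in the form $\int_\Omega f_{\Omega,p}\,d\nu \le \bigl(\int_\Omega f_{\Omega,p}^p\,d\nu\bigr)^{1/p}\nu(\Omega)^{1/p'}$. The only difference is cosmetic: the paper bounds $1/T_{m,p}(\Omega)$ from below by the infimum defining $\lambda_{m,p}(\Omega)$, while you bound $\lambda_{m,p}(\Omega)$ from above by the same test function, which is the same step read in the other direction.
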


\begin{proof} By Theorem \ref{Charact1}, we have
$$
\frac{1}{T_{m,p}(\Omega)} =
\frac{\displaystyle  \frac{1}{2}\int_{\Omega_m\times \Omega_m}|\nabla  f_{\Omega,p}(x,y)|^p \, d(\nu\otimes m_x)(x,y)}{\displaystyle\left(\int_\Omega f_{\Omega,p}d\nu \right)^p}.
$$
Now
$$\int_\Omega f_{\Omega,p}(x) d\nu(x) \leq \left(\int_\Omega (f_{\Omega,p}(x))^p d\nu(x)\right)^{\frac1p} \nu(\Omega)^{\frac{1}{p'}}.$$
Hence
$$\frac{1}{T_{m,p}(\Omega)} \geq  \frac{\displaystyle  \frac{1}{2}\int_{\Omega_m}\int_{\Omega_m} |\nabla f_{\Omega,p}(x,y)|^p d(\nu\otimes m_x)(x,y)}{\nu(\Omega)^{p-1}\displaystyle \int_\Omega (f_{\Omega,p}(x))^p d\nu(x) } \ge \frac{\lambda_{m,p}(\Omega)}{\nu(\Omega)^{p-1}}.$$
\end{proof}

 Fusco, Maggi and Pratelli in \cite{FMP1} (see also \cite{Avinyo}, \cite{FMP2} and \cite{PratelliSaracco}) generalized the classical concept of Cheeger constant, introducing, for $p \geq \frac{N-1}{N}$, the {\it  $p$-Cheeger constant} of and open set $\Omega \subset \R^N$ of finite measure as
$$h_p(\Omega):= \inf \left\{ \frac{P(E)}{\vert E \vert^p} : E \subset \Omega \ \hbox{is open} \right\}.$$
Note that for $h_1(\Omega)$ is the classical Cheeger constant.

In \cite{MST2} (see also~\cite{MSTBook}), for a set $\Omega \in \mathcal{B}$  such that $0 < \nu(\Omega) < \nu(X)$, we define its {\it $m$-Cheeger constant} as
 $$h_1^m(\Omega):= \inf \left\{ \frac{P_m(E)}{\nu(E)}   :   E \in \mathcal{B}, \ E \subset \Omega, \ \nu(E) >0 \right\}$$
 and  we prove (see~\cite[Theorem 3.37]{MSTBook}) that
\begin{equation}\label{RayleighCns01}\lambda_{m,1}(\Omega)=h_1^m(\Omega).
\end{equation}

\begin{remark}\rm  For any $p\ge 1$,
\begin{equation}\label{29d001}\lambda_{m,p}(\Omega)\le h_1^m(\Omega)=\lambda_{m,1}(\Omega).
\end{equation}
Indeed, for any $E\subset \Omega$, $\nu(E)>0$,
$$\lambda_{m,p}(\Omega)  \le \frac{\displaystyle \frac12\int_{\Omega_m}\int_{\Omega_m} |\nabla \1_E(x,y)|^p dm_x(y)d\nu(x)}{\displaystyle \int_\Omega \1_E(x) d\nu(x) }
$$ $$= \frac{\displaystyle\frac12\int_{\Omega_m}\int_{\Omega_m} |\nabla \1_E(x,y)|  dm_x(y)d\nu(x)}{\displaystyle \int_\Omega \1_E(x)  d\nu(x) }
=\frac{P_m(E)}{\nu(E)}.$$
Then taking infimum, and on account of~\eqref{RayleighCns01}, we get~\eqref{29d001}. $\blacksquare$
\end{remark}

Now, we introduce the following nonlocal version of the  $p$-Cheeger constant.

\begin{definition}{\rm  Let $p >1$, we define its {\it $m$-$p$-Cheeger constant} of~$\Omega$ as
 $$h_p^m(\Omega):= \inf \left\{ \frac{P_m(E)}{\nu(E)^p}   :   E \in \mathcal{B}, \ E \subset \Omega, \ \nu(E) >0 \right\},$$
}
\end{definition}

 Similarly to the local case (see, for example,~\cite[Proposition 5.2]{BBPrinari}), we have the following relation between the Chegeer constants and the $m$-$p$-torsional rigidity.

 \begin{theorem}   For $p >1$  we have
 \begin{equation}\label{ChegeerT1}
  2^{p-1}\frac{h_1^m(\Omega){}^p}{\nu(\Omega_m)^{p-1}} \leq  \frac{1} {T_{m,p}(\Omega)}
  \leq h_p^m(\Omega),
 \end{equation}
 and
  \begin{equation}\label{ChegeerT2}
 \lim_{p \to 1^+} \frac{1}{T_{m,p}(\Omega)}=\lim_{p\to 1^+}h_p^m(\Omega)= h_1^m(\Omega).
 \end{equation}
 \end{theorem}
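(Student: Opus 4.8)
The plan is to prove the chain of inequalities \eqref{ChegeerT1} first, and then to deduce the limit \eqref{ChegeerT2} by squeezing $\frac{1}{T_{m,p}(\Omega)}$ between $h_p^m(\Omega)$ and a quantity that tends to $h_1^m(\Omega)$ as $p\to 1^+$.

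For the \emph{upper bound} $\frac{1}{T_{m,p}(\Omega)}\le h_p^m(\Omega)$, I would use the variational characterization \eqref{NonBasrcoR1}. For any $E\in\mathcal B$ with $E\subset\Omega$ and $\nu(E)>0$, plug the test function $g=\1_E$ into the maximum in \eqref{NonBasrcoR1}. Since $|\nabla\1_E|^p=|\nabla\1_E|$, the denominator becomes $\frac12\iint_{\Omega_m\times\Omega_m}|\nabla\1_E|\,d(\nu\otimes m_x)=P_m(E)$, while the numerator is $\nu(E)^p$; hence $T_{m,p}(\Omega)\ge \nu(E)^p/P_m(E)$, i.e. $\frac{1}{T_{m,p}(\Omega)}\le P_m(E)/\nu(E)^p$. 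Taking the infimum over admissible $E$ gives $\frac{1}{T_{m,p}(\Omega)}\le h_p^m(\Omega)$.

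For the \emph{lower bound} $2^{p-1}\frac{h_1^m(\Omega)^p}{\nu(\Omega_m)^{p-1}}\le \frac{1}{T_{m,p}(\Omega)}$, I would again use \eqref{NonBasrcoR1}, now bounding the Rayleigh-type quotient from below for an arbitrary $g\in L_0^p(\Omega_m,\nu)\setminus\{0\}$, $g\ge 0$. The idea is to combine the coarea formula \eqref{coarea} with the definition of $h_1^m$: writing $\Phi(t):=\nu(\{x\in\Omega:g(x)>t\})$, the layer-cake representation gives $\int_\Omega g\,d\nu=\int_0^\infty \Phi(t)\,dt$, while $\frac12\iint|\nabla g|\,d(\nu\otimes m_x)=TV_m(g)=\int_0^\infty P_m(\{g>t\})\,dt\ge h_1^m(\Omega)\int_0^\infty\Phi(t)\,dt = h_1^m(\Omega)\int_\Omega g\,d\nu$. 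This controls the $p=1$ quotient. To pass to exponent $p$, I relate $\frac12\iint|\nabla g|^p\,d(\nu\otimes m_x)$ to $\bigl(\frac12\iint|\nabla g|\,d(\nu\otimes m_x)\bigr)^p$: by Jensen's/Hölder's inequality applied to the probability-like normalization of $d(\nu\otimes m_x)$ on $\Omega_m\times\Omega_m$ (whose total mass is $\nu(\Omega_m)$, since each $m_x$ is a probability measure), one gets
$$
\frac12\iint|\nabla g|^p\,d(\nu\otimes m_x)\ \ge\ \frac{2^{p-1}}{\nu(\Omega_m)^{p-1}}\Bigl(\tfrac12\iint|\nabla g|\,d(\nu\otimes m_x)\Bigr)^p.
$$
Combining, $\frac12\iint|\nabla g|^p\,d(\nu\otimes m_x)\ge \frac{2^{p-1}}{\nu(\Omega_m)^{p-1}}\,h_1^m(\Omega)^p\,\bigl(\int_\Omega g\,d\nu\bigr)^p$, which via \eqref{NonBasrcoR1} is exactly the claimed lower bound on $1/T_{m,p}(\Omega)$.

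For the limit \eqref{ChegeerT2}: from \eqref{ChegeerT1} we have $2^{p-1}h_1^m(\Omega)^p/\nu(\Omega_m)^{p-1}\le 1/T_{m,p}(\Omega)\le h_p^m(\Omega)$, so it suffices to show $\limsup_{p\to1^+}h_p^m(\Omega)\le h_1^m(\Omega)$ and $\liminf_{p\to1^+}2^{p-1}h_1^m(\Omega)^p/\nu(\Omega_m)^{p-1}\ge h_1^m(\Omega)$; the latter is immediate since $p\mapsto 2^{p-1}h_1^m(\Omega)^p/\nu(\Omega_m)^{p-1}$ is continuous at $p=1$ with value $h_1^m(\Omega)$. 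For the former, fix $\varepsilon>0$ and a set $E\subset\Omega$ with $P_m(E)/\nu(E)\le h_1^m(\Omega)+\varepsilon$; then $h_p^m(\Omega)\le P_m(E)/\nu(E)^p=\bigl(P_m(E)/\nu(E)\bigr)\nu(E)^{1-p}$, and $\nu(E)^{1-p}\to 1$ as $p\to1^+$ (here $0<\nu(E)\le\nu(\Omega)<\infty$), so $\limsup_{p\to1^+}h_p^m(\Omega)\le h_1^m(\Omega)+\varepsilon$. Letting $\varepsilon\to0$ finishes the argument, and the squeeze then forces $\lim_{p\to1^+}1/T_{m,p}(\Omega)=\lim_{p\to1^+}h_p^m(\Omega)=h_1^m(\Omega)$. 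The main obstacle I anticipate is getting the constants exactly right in the Jensen/Hölder step for the lower bound (the factor $2^{p-1}/\nu(\Omega_m)^{p-1}$), in particular handling the normalization of $d(\nu\otimes m_x)$ carefully so that the power of $\nu(\Omega_m)$ matches; everything else is routine.
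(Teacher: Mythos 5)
Your proof is correct and takes essentially the same route as the paper: testing $\1_E$ in the variational formula \eqref{NonBasrcoR1} for the upper bound, the coarea formula plus the definition of $h_1^m(\Omega)$ plus H\"older (with total mass bounded by $\nu(\Omega_m)$) for the lower bound with the constant $2^{p-1}/\nu(\Omega_m)^{p-1}$, and the same squeeze using $\nu(E)^{1-p}\to 1$ for the limit. The only cosmetic difference is that the paper carries out the lower-bound computation directly on the maximizer $f_{\Omega,p}$ and uses the identity \eqref{igualdaB}, $\frac12\iint|\nabla f_{\Omega,p}|^p\,d(\nu\otimes m_x)=\int_\Omega f_{\Omega,p}\,d\nu$, whereas you bound the Rayleigh-type quotient for a general nonnegative $g$ and then invoke \eqref{NonBasrcoR1}; the constants agree.
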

 \begin{proof} By the coarea formula \eqref{coarea} and Cavalieri's principle, we have
 $$\displaystyle \frac12\int_{\Omega_m\times \Omega_m}|\nabla f_{\Omega,p}(x,y)|d(\nu\otimes m_x)(x,y) = \int_{0}^{+\infty} P_m( \{  x\in \Omega: f_{\Omega,p}(x) >t \}) dt$$ $$  \geq \int_{0}^{+\infty} h_1^m(\Omega) \nu(\{ x\in \Omega: f_{\Omega,p}(x) >t \}) dt $$ $$= h_1^m(\Omega) \int_{0}^{+\infty}  \nu(\{ x\in \Omega: f_{\Omega,p} (x) >t \}) dt =  h_1^m(\Omega)\int_\Omega f_{\Omega,p}(x) d \nu(x) . $$
 Hence, by H\"older's inequality and \eqref{igualdaB}, we obtain
 $$h_1^m(\Omega) \leq \frac{\displaystyle\frac12\int_{\Omega_m\times \Omega_m}|\nabla f_{\Omega,p}(x,y)|d(\nu\otimes m_x)(x,y)}{\displaystyle\int_\Omega f_{\Omega,p}(x) d \nu(x)} $$ $$ \leq
  \frac{\displaystyle\frac12 \left(\int_{\Omega_m\times \Omega_m}|\nabla f_{\Omega,p}(x,y)|^p d(\nu\otimes m_x)(x,y)\right)^{\frac{1}{p}}  \nu(\Omega_m)^{\frac{p-1}{p}}}{\displaystyle\int_\Omega f_{\Omega,p}(x) d \nu(x)} $$ $$= \frac{1}{2^{\frac{p-1}{p}}} \frac{\left(\displaystyle\int_\Omega f_{\Omega,p}(x) d \nu(x)\right)^{\frac{1}{p}}  \nu(\Omega_m)^{ \frac{p-1}{p}}}{\displaystyle\int_\Omega f_{\Omega,p}(x) d \nu(x)}$$
  $$ =  \frac{1}{2^{\frac{p-1}{p}}} \frac{  \nu(\Omega_m)^{\frac{p-1}{p}}}{\left(\displaystyle\int_\Omega f_{\Omega,p}(x) d \nu(x)\right)^{\frac{p-1}{p}}} = \frac{1}{2^{\frac{p-1}{p}}}\left( \frac{ \nu(\Omega_m)^{p-1}}{T_{m,p}(\Omega)} \right)^{\frac{1}{p}},$$
  and, from here
  \begin{equation}\label{ChegeerT1laprime}
  2^{p-1}\frac{h_1^m(\Omega){}^p}{\nu(\Omega_m)^{p-1}} \leq  \frac{1} {T_{m,p}(\Omega)}.
  \end{equation}
   On the other hand, by \eqref{NonBasrcoR1}, for any $E \in \mathcal{B}, \ E \subset \Omega, \ \nu(E) >0$,  we have
 $$
\frac{1}{T_{m,p}(\Omega) } \leq
\frac{\displaystyle \frac{1}{2}\int_{\Omega_m\times \Omega_m}|\nabla \1_E(x,y)|^p \, d(\nu\otimes m_x)(x,y)}{\displaystyle\left(\int_\Omega \1_E d\nu \right)^p} = \frac{P_m(E)}{\nu(E){}^p},$$
from where,
\begin{equation}\label{ChegeerT1lasegun}
     \frac{1} {T_{m,p}(\Omega)}
  \leq h_p^m(\Omega).
 \end{equation}
 And~\eqref{ChegeerT1} is proved.

Taking limits in~\eqref{ChegeerT1}, we have
  \begin{equation}\label{siiprevm01}h_1^m(\Omega) \leq  \liminf_{p \to 1^+} \frac{1}{T_{m,p}(\Omega)}\le \liminf_{p\to 1^+}h_p^m(\Omega),
  \end{equation}
  and
  $$  \limsup_{p \to 1^+} \frac{1}{T_{m,p}(\Omega)}\le \limsup_{p\to 1^+}h_p^m(\Omega).$$
 Let us now see  that
\begin{equation}\label{sii}  \limsup_{p\to 1+}h_p^m(\Omega) \leq h_1^m(\Omega).
\end{equation}
Indeed, for any $E \in \mathcal{B}, \ E \subset \Omega, \ \nu(E) >0$,  we have
$$
 h_p^m(\Omega)  \leq
 \frac{P_m(E)}{\nu(E)^p},$$
and, from here
$$\limsup_{p\to 1+}h_p^m(\Omega) \leq \frac{P_m(E)}{\nu(E)},$$
which allows to prove~\eqref{sii}. Finally,~\eqref{siiprevm01} and~\eqref{sii} gives~\eqref{ChegeerT2}.
\end{proof}

  P\'{o}lya \cite{P2} proves that, among all bounded open and convex planar sets, the following inequality holds
\begin{equation}\label{PolyaInq1}
\frac13 \leq \frac{T(D) P(D)^2}{\vert D \vert^3},
\end{equation}
 being the constant $\frac13$ optimal. This was generalized in~\cite{BBPrinari0} to dimension $N\ge 3$.
On the other hand, Makai \cite{Makai} proves that, among all bounded open and convex planar sets, the following upper bound holds
  \begin{equation}\label{MakaiInq}
 \frac{T(D) P(D)^2}{\vert D \vert^3} \leq \frac23,
\end{equation}
  being the constant $\frac23$ optimal.  See~ \cite{BBPrinari0}  for a conjecture in   dimension $N\ge 3$.
 Estimates \eqref{PolyaInq1} and \eqref{MakaiInq} are generalized for the $p$-Laplacian by Fragala,  Gazzola and Lamboley in \cite{FGL}.

Recall that $\Omega$ is $m$-calibrable if  $ h_1^m(\Omega)= \frac{P_m(\Omega)}{\nu(\Omega)}.$

\begin{corollary} We have
\begin{equation}\label{Polya2}
\frac{\nu(\Omega)^2}{ P_m(\Omega)}\leq{T_{m}(\Omega)}.
\end{equation}
Moreover, if $\Omega$ is $m$-calibrable, then
\begin{equation}\label{Polya1}
T_m(\Omega) \leq \frac12\frac{\nu(\Omega)^2\nu(\Omega_m)}{P_m(\Omega)^2}.
\end{equation}
 \end{corollary}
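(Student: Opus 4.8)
The plan is to specialize the two-sided estimate \eqref{ChegeerT1} to the exponent $p=2$ and to feed the admissible competitor $E=\Omega$ into the definition of the $m$-$2$-Cheeger constant. First I would recall that $T_m(\Omega)=T_{m,2}(\Omega)$ by definition, so that \eqref{ChegeerT1} at $p=2$ reads
$$2\,\frac{h_1^m(\Omega)^2}{\nu(\Omega_m)}\le\frac{1}{T_m(\Omega)}\le h_2^m(\Omega).$$

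For the lower bound \eqref{Polya2} I would use only the right-hand inequality above. Since $\Omega$ is itself admissible in the infimum defining $h_2^m(\Omega)=\inf\{P_m(E)/\nu(E)^2 : E\in\mathcal{B},\ E\subset\Omega,\ \nu(E)>0\}$, we get $h_2^m(\Omega)\le P_m(\Omega)/\nu(\Omega)^2$. Combining this with $1/T_m(\Omega)\le h_2^m(\Omega)$ and inverting (all quantities being positive) yields $\nu(\Omega)^2/P_m(\Omega)\le T_m(\Omega)$, which is \eqref{Polya2}.

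For the upper bound \eqref{Polya1} I would invoke the left-hand inequality together with the calibrability hypothesis. By definition, $\Omega$ being $m$-calibrable means $h_1^m(\Omega)=P_m(\Omega)/\nu(\Omega)$, so substituting into $2\,h_1^m(\Omega)^2/\nu(\Omega_m)\le 1/T_m(\Omega)$ gives $2\,P_m(\Omega)^2/(\nu(\Omega)^2\nu(\Omega_m))\le 1/T_m(\Omega)$, and inverting yields $T_m(\Omega)\le\tfrac12\,\nu(\Omega)^2\nu(\Omega_m)/P_m(\Omega)^2$, i.e. \eqref{Polya1}.

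There is essentially no obstacle: the corollary is a direct reading of \eqref{ChegeerT1} at $p=2$. The only points to verify are that $\nu(\Omega)>0$ and $P_m(\Omega)>0$, so that the divisions and the inversions of inequalities are legitimate; both are guaranteed by the standing assumptions on $\Omega$ (recall \eqref{04012302}), while positivity of $T_m(\Omega)$ follows from \eqref{torsiomass}. In particular no symmetrization or additional estimate is needed, all the analytic work having already been carried out in the proof of \eqref{ChegeerT1}.
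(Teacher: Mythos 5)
Your argument is correct and is essentially identical to the paper's proof: both specialize \eqref{ChegeerT1} to $p=2$, bound $h_2^m(\Omega)\le P_m(\Omega)/\nu(\Omega)^2$ by taking $E=\Omega$ in the infimum to get \eqref{Polya2}, and substitute the calibrability identity $h_1^m(\Omega)=P_m(\Omega)/\nu(\Omega)$ into the left-hand inequality to get \eqref{Polya1}. Your added remarks on positivity of the quantities involved are a harmless (and sound) extra check.
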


 \begin{proof} Taking $p=2$ in~\eqref{ChegeerT1}, since $T_{m,2}(\Omega) = T_{m}(\Omega)$, we have
  \begin{equation}\label{inf2}  2 \frac{h_1^m(\Omega){}^2}{\nu(\Omega_m)} \le\frac{1} {T_{m}(\Omega)}
  \leq h_2^m(\Omega) .
  \end{equation}
 Then, since $h_2^m(\Omega) \leq \frac{P_m(\Omega)}{\nu(\Omega)^2}$, from the second inequality in~\eqref{inf2} we get
 $$\frac{1} {T_{m}(\Omega)}
  \leq \frac{P_m(\Omega)}{\nu(\Omega)^2},$$ and~\eqref{Polya2} holds.
 On the other hand, assuming that $\Omega$ is $m$-calibrable,   we have $h_1^m(\Omega)= \frac{P_m(\Omega)}{\nu(\Omega)}$, and, substituting this value in the first inequality of~\eqref{inf2}, we have
 $$ 2\frac{P_m(\Omega)^2}{\nu(\Omega)^2\nu(\Omega_m)} \leq  \frac{1} {T_{m}(\Omega)},$$
 from  where \eqref{Polya1} holds.
 \end{proof}

Observe that, from~\eqref{04012302},  \eqref{lambdatorsionlocal1nl} and~\eqref{Polya2}, we have
 \begin{equation}\label{04012303}\nu(\Omega)<\frac{\nu(\Omega)^2}{ P_m(\Omega)}\leq{T_{m}(\Omega)} \le \frac{\nu(\Omega)}{\lambda_{m,2}(\Omega)}.
 \end{equation}
In the next example we will see that the second and third inequalities in \eqref{04012303}   are  sharp. We see that they are equalities for the most simple connected set for  weighted discrete graphs, which is trivially $m^G$-calibrable.

\begin{example}\label{7en01}\rm
\item{ 1.} Consider the weighted discrete lasso graph $V(G)=\{x,y\}$ with weights $w_{xx}=a>0$, $w_{xy}=b>0$ and $w_{yy}=0$ (we are in a situation of Example~\ref{example.graphs}). And take $\Omega=\{x\}$, which is $m^G$-connected (because of the loop). It is easy to see that
$$\nu_G(\Omega)=a+b,$$
$$P_{m^G}(\Omega)=b,$$
$$T_{m^G}(\Omega)=\frac{(a+b)^2}{b},$$
and $$\lambda_{m^G,2}(\Omega)=\frac{b}{a+b}.$$
Hence,
$$ \frac{\nu_G(\Omega)^2}{ P_{m^G}(\Omega)}={T_{m^G}(\Omega)}=\frac{\nu_G(\Omega)}{\lambda_{m^G,2}(\Omega)}.$$
 \item{ 2.} For the weighted discrete graph  $V(G)=\{x,y_1,y_2,\dots,y_k\}$, $k\ge 2$ with weights $w_{xx}=a>0$, $w_{xy_i}=b_i>0$ and $w_{y_iy_j}=0$ for any $i,j$, if we set $b=\sum_{i=1}^kb_j$, and take $\Omega=\{x\}$, we have the same results   than for the lasso graph.
$\blacksquare$
\end{example}

   In the next result we will see the influence of the $m$-mean curvature of $\Omega$.
 Observe first that, by \eqref{pararm01},
 \begin{equation}\label{pararm01NN}\frac{\displaystyle 1+\frac{1}{\nu(\Omega)}\int_\Omega H_{\partial\Omega}^m(x)d\nu(x)}{2}=\frac{P_m(\Omega)}{\nu(\Omega)}.
 \end{equation}
Then,~\eqref{Polya2} is equivalent to
\begin{equation}\label{equivtopolya2}\frac{\displaystyle1+\frac{1}{\nu(\Omega)}\int_\Omega H_{\partial\Omega}^m(x)d\nu(x)}{2}\frac{\nu(\Omega)^3}{ P_m(\Omega)^2}\leq{T_{m}(\Omega)}.
\end{equation}
Remember  also that
$$-1\le \frac{1}{\nu(\Omega)}\int_\Omega H_{\partial\Omega}^m(x)d\nu(x)\le 1.$$
Then, as  an inmediate consequence of~\eqref{equivtopolya2} we have:

 \begin{corollary}\label{PMIneq}
 Assume that b there exists $\beta \in \R$ such thatb
 \begin{equation}\label{lapol002dd}
 -1<\beta\le \frac{1}{\nu(\Omega)}\int_\Omega H_{\partial\Omega}^m(x)d\nu(x)<1.
\end{equation}
Then
\begin{equation}\label{la3001}
\left(\frac{\beta+1}{2} \right)\frac{\nu(\Omega)^3}{ P_m(\Omega)^2}\leq{T_{m}(\Omega)}.
\end{equation}
 \end{corollary}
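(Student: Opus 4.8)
This is a direct monotonicity argument built on the estimate \eqref{equivtopolya2}, which is itself just a rewriting of the P\'olya-type lower bound \eqref{Polya2} via the curvature identity \eqref{pararm01NN}. So the first thing I would do is recall that \eqref{pararm01} (equivalently \eqref{pararm01NN}) identifies
$$\frac{P_m(\Omega)}{\nu(\Omega)} = \frac{1 + \frac{1}{\nu(\Omega)}\int_\Omega H^m_{\partial\Omega}(x)\,d\nu(x)}{2},$$
so that \eqref{Polya2} transforms into \eqref{equivtopolya2}. Note that all the quantities here are strictly positive: by Remark~\ref{04012301} (see \eqref{04012302}) we have $0 < P_m(\Omega) < \nu(\Omega) < \infty$, hence $\frac{\nu(\Omega)^3}{P_m(\Omega)^2} > 0$.

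Next, with $\beta \in \R$ as in the hypothesis \eqref{lapol002dd}, I would simply use the assumed inequality $\beta \le \frac{1}{\nu(\Omega)}\int_\Omega H^m_{\partial\Omega}(x)\,d\nu(x)$ together with monotonicity of $t \mapsto \tfrac{t+1}{2}$ to get
$$\frac{\beta+1}{2} \le \frac{1 + \frac{1}{\nu(\Omega)}\int_\Omega H^m_{\partial\Omega}(x)\,d\nu(x)}{2}.$$
Multiplying both sides by the positive factor $\frac{\nu(\Omega)^3}{P_m(\Omega)^2}$ and then invoking \eqref{equivtopolya2} yields
$$\left(\frac{\beta+1}{2}\right)\frac{\nu(\Omega)^3}{P_m(\Omega)^2} \le \frac{1 + \frac{1}{\nu(\Omega)}\int_\Omega H^m_{\partial\Omega}(x)\,d\nu(x)}{2}\cdot\frac{\nu(\Omega)^3}{P_m(\Omega)^2} \le T_m(\Omega),$$
which is exactly \eqref{la3001}.

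\textbf{Main obstacle.} There is essentially no obstacle: the entire content lies in the already-proven chain \eqref{Polya2}$\Rightarrow$\eqref{equivtopolya2}, and the corollary only discards the upper constraint $\frac{1}{\nu(\Omega)}\int_\Omega H^m_{\partial\Omega}\,d\nu < 1$ (which is automatic from \eqref{04012302}, since it is equivalent to $P_m(\Omega) < \nu(\Omega)$) and replaces the curvature average by any lower bound $\beta$. The only point worth a line of care is confirming the sign of $\frac{\nu(\Omega)^3}{P_m(\Omega)^2}$ before multiplying through the inequality, which is immediate from $0 < P_m(\Omega) < \nu(\Omega) < \infty$.
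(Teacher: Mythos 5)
Your argument is correct and is exactly the paper's: the corollary is stated there as an immediate consequence of \eqref{equivtopolya2} (itself the rewriting of \eqref{Polya2} via \eqref{pararm01NN}), using precisely the monotonicity step $\frac{\beta+1}{2}\le\frac{1+\frac{1}{\nu(\Omega)}\int_\Omega H^m_{\partial\Omega}\,d\nu}{2}$ and positivity of $\frac{\nu(\Omega)^3}{P_m(\Omega)^2}$. Nothing is missing.
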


 By~\eqref{pararm01}, we  have
 $$\frac{1}{\nu(\Omega)}\int_\Omega H_{\partial\Omega}^m(x)d\nu(x)\le \alpha<1\ \Leftrightarrow\ \frac{P_m(\Omega)}{\nu(\Omega)}\le \frac{\alpha+1}{2}.$$
Now, since~\eqref{Polya2} can be written as
$$T_{m}(\Omega)\ge \frac{\nu(\Omega)^2}{ P_m(\Omega)}=\frac{\nu(\Omega)}{ P_m(\Omega)}\nu(\Omega),$$
we obtain  the following result.
 \begin{corollary}\label{Corlapol001dxcur}
  Assume that  there exists $\alpha \in \R$ such that
 \begin{equation}\label{lapol001dxcur} \displaystyle -1<\frac{1}{\nu(\Omega)}\int_\Omega H_{\partial\Omega}^m(x)d\nu(x)\le \alpha <1.
\end{equation}
  Then
 $$T_{m}(\Omega)\ge   \frac{2}{\alpha+1}\nu(\Omega).$$
\end{corollary}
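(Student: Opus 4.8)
The plan is to reduce the claimed lower bound to the Saint-Venant-type inequality \eqref{Polya2} by rewriting the curvature hypothesis as an upper bound on the relative $m$-perimeter $P_m(\Omega)/\nu(\Omega)$.

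First I would recall the identity \eqref{pararm01}, namely
$$\frac{1}{\nu(\Omega)}\int_\Omega H^m_{\partial\Omega}(x)\,d\nu(x) = 2\frac{P_m(\Omega)}{\nu(\Omega)} - 1,$$
which shows that the hypothesis $\frac{1}{\nu(\Omega)}\int_\Omega H^m_{\partial\Omega}(x)\,d\nu(x)\le\alpha$ is equivalent to $\frac{P_m(\Omega)}{\nu(\Omega)}\le\frac{\alpha+1}{2}$. Together with $-1<\alpha<1$ and Remark~\ref{04012301} (which gives $0<P_m(\Omega)<\nu(\Omega)$, cf. \eqref{04012302}), this keeps $\frac{\alpha+1}{2}$ strictly positive, so the inequalities below may be inverted legitimately.

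Then I would invoke \eqref{Polya2}, which gives $\frac{\nu(\Omega)^2}{P_m(\Omega)}\le T_m(\Omega)$, and write its left-hand side as $\frac{\nu(\Omega)}{P_m(\Omega)}\,\nu(\Omega)$. Since $\frac{P_m(\Omega)}{\nu(\Omega)}\le\frac{\alpha+1}{2}$ forces $\frac{\nu(\Omega)}{P_m(\Omega)}\ge\frac{2}{\alpha+1}$, one obtains
$$T_m(\Omega)\ge\frac{\nu(\Omega)^2}{P_m(\Omega)}=\frac{\nu(\Omega)}{P_m(\Omega)}\,\nu(\Omega)\ge\frac{2}{\alpha+1}\,\nu(\Omega),$$
which is the assertion.

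There is essentially no obstacle here: the entire content lies in the two ingredients \eqref{pararm01} and \eqref{Polya2}, both already established, and the corollary is merely their algebraic combination. The only point that requires a moment's care is the sign and range bookkeeping — that $\frac{\alpha+1}{2}>0$ so the relevant inequalities may be inverted — which follows at once from $\alpha>-1$.
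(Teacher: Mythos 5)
Your proposal is correct and follows exactly the paper's own route: the identity \eqref{pararm01} turns the curvature hypothesis into $P_m(\Omega)/\nu(\Omega)\le\frac{\alpha+1}{2}$, and then \eqref{Polya2}, written as $T_m(\Omega)\ge\frac{\nu(\Omega)}{P_m(\Omega)}\,\nu(\Omega)$, gives the bound. Nothing is missing; the sign remark about $\alpha>-1$ is the same bookkeeping implicit in the paper.
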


\begin{remark}\rm

\item{1.}   Let us  remark  that,  assuming \eqref{lapol001dxcur},  by the above Corollary and  by~\eqref{lambdatorsionlocal1nl}, we have
$$ \lambda_{m,2}(\Omega) \le \frac{\alpha+1}{2}.$$

\item{2.} Observe that~\eqref{la3001} is a  P\'{o}lya-type inequality for   subsets satisfying~\eqref{lapol002dd}; and that~\eqref{Polya1} is a  Makai-type inequality for calibrable subsets.

\item{3.}    As a consequence of~\eqref{equivtopolya2} and~\eqref{Polya1}, if $\Omega$ is calibrable then
    $$\frac{1}{\nu(\Omega)}\int_\Omega H_{\partial\Omega}^m(x)d\nu(x)\le\frac{\nu(\partial_m\Omega)}{\nu(\Omega)},$$
    or equivalently, using~\eqref{pararm01},
    $$h_1^m(\Omega)=\frac{P_m(\Omega)}{\nu(\Omega)}\le \frac12\left(1+\frac{\nu(\partial_m\Omega)}{\nu(\Omega)}\right).$$
$\blacksquare$
\end{remark}

We have the following result (see~\cite{kafri} in the local case).

\begin{theorem} We have,
\begin{equation}\label{29d002} \left(\frac{\lambda_{m,1}(\Omega)}{p} \right)^p \leq \lambda_{m,p}(\Omega)\le \lambda_{m,1}.
\end{equation}
 And consequently,
 \begin{equation}\label{ChegeerT2dd01}
  \lim_{p\to 1^+}\lambda_{m,p}(\Omega) = \lambda_{m,1}= h_1^m(\Omega).
 \end{equation}
\end{theorem}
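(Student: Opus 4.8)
The claim has two parts: the double inequality $\left(\lambda_{m,1}(\Omega)/p\right)^p \le \lambda_{m,p}(\Omega) \le \lambda_{m,1}(\Omega)$, and the limit $\lim_{p\to 1^+}\lambda_{m,p}(\Omega) = \lambda_{m,1}(\Omega) = h_1^m(\Omega)$. The last equality in the limit is already available from \eqref{RayleighCns01}, so the work is entirely in the two-sided estimate \eqref{29d002}; the limit then follows by a sandwich argument exactly as in the proof of \eqref{ChegeerT2}. The upper bound $\lambda_{m,p}(\Omega)\le \lambda_{m,1}(\Omega)$ is the easy half: it is precisely the content of \eqref{29d001}, which was established earlier by testing the Rayleigh quotient defining $\lambda_{m,p}(\Omega)$ with characteristic functions $\1_E$ and using that $|\nabla\1_E|^p = |\nabla\1_E|$, then passing to the infimum over $E$ and invoking \eqref{RayleighCns01}. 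So I would simply cite \eqref{29d001}.

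\textbf{The main estimate.} The heart of the matter is the lower bound $\left(\lambda_{m,1}(\Omega)/p\right)^p \le \lambda_{m,p}(\Omega)$, and here I would follow the classical argument behind the inequality relating the first $p$-eigenvalue to the Cheeger constant (the nonlocal analogue of Kawohl--Fridman, cf.\ \cite{kafri}). Fix a nonnegative $f \in L_0^p(\Omega_m,\nu)\setminus\{0\}$. Apply the coarea formula \eqref{coarea} to the function $|f|^p$ (more precisely, write $|f|^p = \int_0^\infty \1_{\{|f|^p>s\}}\,ds$ and use $P_m$ of the superlevel sets): since $\{|f|^p > s\} = \{|f| > s^{1/p}\}$, one gets
$$
\frac12\int_{\Omega_m\times\Omega_m} \big|\nabla(|f|^p)(x,y)\big|\,d(\nu\otimes m_x)(x,y) = \int_0^\infty P_m(\{|f|>t^{1/p}\})\,dt \ge h_1^m(\Omega)\int_\Omega |f|^p\,d\nu,
$$
using the definition of $h_1^m(\Omega)$ on each superlevel set and then Cavalieri. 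On the other hand, the pointwise inequality $\big| |a|^p - |b|^p\big| \le p\,(|a|^{p-1}+|b|^{p-1})\,|a-b|$ (or more conveniently $\big| |b|^p-|a|^p\big| \le p\,\max(|a|,|b|)^{p-1}|b-a|$) lets me estimate $|\nabla(|f|^p)(x,y)|$ by $p\,(\text{something})^{p-1}|\nabla f(x,y)|$; applying Hölder's inequality with exponents $p$ and $p' = p/(p-1)$ to the double integral $\int\int (\cdots)^{p-1}|\nabla f|\,d(\nu\otimes m_x)$ yields
$$
\frac12\int_{\Omega_m\times\Omega_m}\big|\nabla(|f|^p)\big|\,d(\nu\otimes m_x) \le p\left(\frac12\int_{\Omega_m\times\Omega_m}|\nabla f|^p\,d(\nu\otimes m_x)\right)^{1/p}\left(\int_\Omega |f|^p\,d\nu\right)^{1/p'}.
$$
Combining the two displays, dividing by $\left(\int_\Omega|f|^p\,d\nu\right)^{1/p'}$ and by $\int_\Omega|f|^p\,d\nu$, and using \eqref{RayleighCns01} to replace $h_1^m(\Omega)$ by $\lambda_{m,1}(\Omega)$, gives
$$
\lambda_{m,1}(\Omega) \le p\left(\frac{\tfrac12\int_{\Omega_m\times\Omega_m}|\nabla f|^p\,d(\nu\otimes m_x)}{\int_\Omega|f|^p\,d\nu}\right)^{1/p},
$$
and taking the infimum over such $f$ (recalling the reduction to nonnegative competitors in \eqref{RayleighC}) yields $\lambda_{m,1}(\Omega) \le p\,\lambda_{m,p}(\Omega)^{1/p}$, i.e.\ $\left(\lambda_{m,1}(\Omega)/p\right)^p \le \lambda_{m,p}(\Omega)$.

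\textbf{The limit and the expected obstacle.} Once \eqref{29d002} is established, the limit \eqref{ChegeerT2dd01} is immediate: since $\left(\lambda_{m,1}(\Omega)/p\right)^p \to \lambda_{m,1}(\Omega)$ as $p\to 1^+$ and $\lambda_{m,p}(\Omega)\le \lambda_{m,1}(\Omega)$ for all $p$, the squeeze gives $\lim_{p\to1^+}\lambda_{m,p}(\Omega)=\lambda_{m,1}(\Omega)$, and $\lambda_{m,1}(\Omega)=h_1^m(\Omega)$ by \eqref{RayleighCns01}. The step I expect to require the most care is the chain-rule-type estimate for $|\nabla(|f|^p)|$ together with the correct application of Hölder: one must make sure the exponent $p-1$ on the "max" factor pairs with $p' = p/(p-1)$ so that the resulting integral is exactly $\int_\Omega |f|^p\,d\nu$ (up to the factor coming from bounding $\max(|f(x)|,|f(y)|)^p$ by $|f(x)|^p + |f(y)|^p$ and then using reversibility of $\nu$, as in the symmetrization trick used repeatedly in the proof of Theorem~\ref{fk01}), rather than something with the wrong homogeneity. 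Everything else is a routine assembly of the coarea formula, the definition of $h_1^m(\Omega)$, and Hölder's inequality.
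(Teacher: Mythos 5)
Your proof is correct and takes essentially the same route as the paper: the same pointwise estimate $\bigl||b|^p-|a|^p\bigr|\le p\,\max\{|a|,|b|\}^{p-1}|b-a|$, H\"older with exponents $p$ and $p'$, reversibility to reduce the max-term to $\int_\Omega |f|^p\,d\nu$, citing \eqref{29d001} for the upper bound, and the identification $\lambda_{m,1}(\Omega)=h_1^m(\Omega)$ from \eqref{RayleighCns01}, with only cosmetic differences (you lower-bound the total variation of $|f|^p$ via the coarea formula as in the proof of \eqref{ChegeerT1}, while the paper tests the Rayleigh quotient defining $\lambda_{m,1}(\Omega)$ with $|u|^{p-1}u$ and uses the reversibility identity \eqref{max} instead of $\max\{|f(x)|,|f(y)|\}^p\le |f(x)|^p+|f(y)|^p$). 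The constant bookkeeping you flagged does close, since $\frac{p}{2}\cdot 2^{1/p}\cdot 2^{(p-1)/p}=p$, so your crude bound on the max still yields exactly $\left(\lambda_{m,1}(\Omega)/p\right)^p\le\lambda_{m,p}(\Omega)$.
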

\begin{proof}
 The second inequality of~\eqref{29d002}  is given in~\eqref{29d001}.
  On the other hand, for $p >1$, we have, for any $a,b\in \mathbb{R}$,
 $$\vert \vert b \vert^{p-1} b - \vert a \vert^{p-1} a \vert \leq p\vert b - a\vert \max\left\{\vert b \vert^{p-1},\vert a \vert^{p-1}\right\}.$$
 Hence
 $$\vert \nabla (\vert u \vert^{p-1} u)(x,y) \vert \leq p\vert \nabla u(x,y)\vert\max\left\{ \vert u(y) \vert^{p-1},\vert u(x) \vert^{p-1}\right\},$$
 and consequently, for $u \in  L_0^p(\Omega_m, \nu) \setminus \{ 0 \}$,  we have
 \begin{equation}\label{mund03}
  \begin{array}{c}
  \displaystyle\lambda_{m,1}(\Omega) \leq  \frac{ \displaystyle\frac{1}{2} \displaystyle\int_{\Omega_m \times \Omega_m} \vert \nabla (\vert u \vert^{p-1} u)(x,y) \vert d(\nu\otimes m_x)(x,y)}{\displaystyle\int_\Omega \vert u(x)\vert^p   d\nu(x)}
   \\ \\
   \displaystyle   \leq  \frac{ \displaystyle\frac{p}{2} \displaystyle\int_{\Omega_m \times \Omega_m} \vert \nabla u(x,y) \vert \max\left\{ \vert u(y) \vert^{p-1},\vert u(x) \vert^{p-1}\right\}d(\nu\otimes m_x)(x,y)}{\displaystyle\int_\Omega \vert u(x)\vert^p   d\nu(x)}.
   \end{array}
\end{equation}

 We claim now that
 \begin{equation}\label{max} \begin{array}{c}
    \displaystyle \int_{\Omega_m \times \Omega_m} \vert \nabla u(x,y) \vert \max\left\{ \vert u(y) \vert^{p-1},\vert u(x) \vert^{p-1}\right\}d(\nu\otimes m_x)(x,y)\\ \\
   \displaystyle =  2\int_{\Omega_m \times \Omega_m}  \vert \nabla u(x,y) \vert\,\1_{\{(x,y)\in\Omega_m\times\Omega_m:u(x)> u(y)\}}(x,y) \vert u(x) \vert^{p-1}d(\nu\otimes m_x)(x,y) .\end{array}
   \end{equation}
   Indeed,  by the reversibility of $\nu$ respect to $m$, and having in mind that $\nabla u(x,y) =0$ if $u(x) = u(y)$ and $\vert \nabla u(x,y) \vert = \vert \nabla u(y,x) \vert$, we have
 $$ \displaystyle \int_{\Omega_m \times \Omega_m} \vert \nabla u(x,y) \vert \max\left\{ \vert u(y) \vert^{p-1},\vert u(x) \vert^{p-1}\right\}d(\nu\otimes m_x)(x,y) $$ $$= \int_{\Omega_m \times \Omega_m}  \vert \nabla u(x,y) \vert\,\1_{\{(x,y)\in\Omega_m\times\Omega_m:u(x)> u(y)\}}(x,y) \vert u(x) \vert^{p-1}d(\nu\otimes m_x)(x,y)
 $$
 $$+\int_{\Omega_m \times \Omega_m}  \vert \nabla u(x,y) \vert\,\1_{\{(x,y)\in\Omega_m\times\Omega_m:u(y)> u(x)\}}(x,y) \vert u(y) \vert^{p-1}d(\nu\otimes m_x)(x,y)
$$
 $$ = \int_{\Omega_m \times \Omega_m}  \vert \nabla u(x,y) \vert\,\1_{\{(x,y)\in\Omega_m\times\Omega_m:u(x)> u(y)\}}(x,y) \vert u(x) \vert^{p-1}d(\nu\otimes m_x)(x,y)
 $$
 $$ +\int_{\Omega_m \times \Omega_m}  \vert \nabla u(x,y) \vert\,\1_{\{(x,y)\in\Omega_m\times\Omega_m:u(x)> u(y)\}}(x,y) \vert u(x) \vert^{p-1}d(\nu\otimes m_x)(x,y)
 $$
 $$=  \displaystyle  2\int_{\Omega_m \times \Omega_m}  \vert \nabla u(x,y) \vert\,\1_{\{(x,y)\in\Omega_m\times\Omega_m:u(x)> u(y)\}}(x,y) \vert u(x) \vert^{p-1}d(\nu\otimes m_x)(x,y).
 $$

 Now, applying H\"older's inequality, we get
 $$   \int_{\Omega_m \times \Omega_m} \vert \nabla u(x,y) \vert\,\1_{\{(x,y)\in\Omega_m\times\Omega_m:u(x)> u(y)\}} (x,y) \vert u(x) \vert^{p-1}d(\nu\otimes m_x)(x,y)
 $$
 $$
 \begin{array}{c}
    \displaystyle \le \left(\int_{\Omega_m \times \Omega_m}  \vert \nabla u(x,y) \vert^p \,\1_{\{(x,y)\in\Omega_m\times\Omega_m:u(x)> u(y)\}}(x,y)d(\nu\otimes m_x)(x,y) \right)^{\frac{1}{p}}
    \\[6pt]
    \displaystyle
    \hfill \times\left(\displaystyle\int_{\Omega_m\times\Omega_m} \vert u(x) \vert^{p} d\nu\otimes m_x(y) \right)^{\frac{p-1}{p}}
    \end{array}
 $$
  $$
    \begin{array}{c}
    \displaystyle \le \left(\int_{\Omega_m \times \Omega_m}   \vert \nabla u(x,y) \vert^p \,\1_{\{(x,y)\in\Omega_m\times\Omega_m:u(x)> u(y)\}}(x,y)d(\nu\otimes m_x)(x,y) \right)^{\frac{1}{p}}
    \\[6pt]
    \displaystyle
    \hfill \times\left(\displaystyle\int_\Omega \vert u(x) \vert^{p} d \nu(x) \right)^{\frac{p-1}{p}}
    \end{array}
 $$
  $$ =\left(\int_{\Omega_m \times \Omega_m} \frac12\vert \nabla u(x,y) \vert^p (x,y)d(\nu\otimes m_x)(x,y) \right)^{\frac{1}{p}}
     \left(\displaystyle\int_\Omega \vert u(x) \vert^{p} d \nu(x) \right)^{\frac{p-1}{p}},
 $$
 where   reversibility is used,  as in the proof of \eqref{max}, to get the last equality. Then
 $$ \begin{array}{c}
    \displaystyle \int_{\Omega_m \times \Omega_m} \vert \nabla u(x,y) \vert \max\left\{ \vert u(y) \vert^{p-1},\vert u(x) \vert^{p-1}\right\}d(\nu\otimes m_x)(x,y)\\ \\ \displaystyle
    \le 2\left(\int_{\Omega_m \times \Omega_m} \frac12\vert \nabla u(x,y) \vert^p (x,y)d(\nu\otimes m_x)(x,y) \right)^{\frac{1}{p}}
     \left(\displaystyle\int_\Omega \vert u(x) \vert^{p} d \nu(x) \right)^{\frac{p-1}{p}}
     \end{array}.$$

 Hence, using the above inequality, from~\eqref{mund03} we get
 $$\lambda_{m,1}(\Omega) \leq   \frac{\displaystyle p  \left(\displaystyle\frac{1}{2}\int_{\Omega_m \times \Omega_m} \vert \nabla u(x,y) \vert^p d(\nu\otimes m_x)(x,y) \right)^{\frac{1}{p}} \left(\displaystyle\int_\Omega \vert u(x) \vert^{p} d \nu(x) \right)^{\frac{p-1}{p}}}{\displaystyle\int_\Omega \vert u(x)\vert^p   d\nu(x)}$$
 $$= \frac{\displaystyle p \left(\displaystyle \frac{1}{2} \int_{\Omega_m \times \Omega_m} \vert \nabla u(x,y) \vert^p d(\nu\otimes m_x)(x,y) \right)^{\frac{1}{p}}}{\displaystyle\left(\int_\Omega \vert u(x)\vert^p   d\nu(x) \right)^{\frac{1}{p}}}.$$
 Thus
 $$  \left(\frac{\lambda_{m,1}(\Omega)}{p} \right)^p \leq \frac{ \displaystyle\frac12 \int_{\Omega_m \times \Omega_m} \vert \nabla u(x,y) \vert^p d(\nu\otimes m_x)(x,y) }{\displaystyle\int_\Omega \vert u(x)\vert^p   d\nu(x) }.$$
 The, taking infimum in $u \in  L_0^p(\Omega_m, \nu) \setminus \{ 0 \}$, we obtain that
$$ \left(\frac{\lambda_{m,1}(\Omega)}{p} \right)^p \leq \lambda_{m,p}(\Omega),$$
and~\eqref{29d002} is proved.
Finally,~\eqref{ChegeerT2dd01} is a direct consequence of~\eqref{29d002} and~\eqref{RayleighCns01}.
\end{proof}

\subsection{A rescaling result}
Set $J_\epsilon$ as in~\eqref{fjai01}. Define
$$C_J=\frac{2}{\displaystyle \int_{\mathbb{R}^N}J(x)|x_N|dx}.$$
Observe that $C_{J_\epsilon}=\frac{1}{\epsilon}C_J$.

 If $\displaystyle\int_{\mathbb{R}^N}J(x)|x|dx<+\infty$,  we have  (see~\cite{redbook}):
\begin{equation}\label{limitT}\lim_{\epsilon\downarrow 0} \frac{C_J}{\epsilon}h_{1}^{m^{J_\epsilon}}(\Omega)=h_1(\Omega).
\end{equation}
Remember that by~\eqref{resc001},
$$\lim_{\epsilon\downarrow 0}\frac{\epsilon^2}{C_{J,2}}T_{m^{J_\epsilon}}(\Omega)=T(\Omega).$$
Now,  from~\eqref{inf2},
  $$  \frac{h_1^{m^{J_\epsilon}}(\Omega){}^2}{|\Omega_{m^{J_\epsilon}}|} \leq \frac12  \frac{1} {T_{{m^{J_\epsilon}}}(\Omega)}.$$
  Then
  $$ \frac{\displaystyle \frac{1}{\epsilon^2}h_1^{m^{J_\epsilon}}(\Omega){}^2}{|\Omega_{m^{J_\epsilon}}|} \leq \frac12  \frac{1} {\epsilon^2T_{{m^{J_\epsilon}}}(\Omega)},$$
  and, taking limits as $\epsilon\to 0$,
\begin{equation}\label{nb02} \frac{ h_1 (\Omega){}^2}{|\Omega|} \leq \frac{C_J{}^2}{2C_{J,2}} \frac{1} { T(\Omega)}.
\end{equation}
Observe that $$\frac{C_J{}^2}{2C_{J,2}}=  \frac{\displaystyle  \int_{\mathbb{R}^N}J(x)|x_N|^2dx}{\left(\displaystyle\int_{\mathbb{R}^N}J(x)|x_N|dx\right)^2}\ge 1.$$ But, $\frac{C_J{}^2}{2C_{J,2}}$ is as close to 1 as we want by choosing  adequately $J$. So we can get
\begin{equation}\label{nb02wc01} \frac{ h_1 (\Omega){}^2}{|\Omega|} \leq   \frac{1} { T(\Omega)},
\end{equation}
and  in particular, for $\Omega$  calibrable we get the Makai-type inequality
\begin{equation}\label{quasiMakaiInq}
 \frac{T(\Omega) P(\Omega)^2}{\vert \Omega \vert^3} \leq 1.
\end{equation}

\section{Torsional rigidity on Quantum Graphs as a $m$-torsional rigidity on graphs}

 Torsional rigidity on quantum graphs was introduce by  Colladay, Kaganovskiy  and McDonald in~\cite{CKM2017}. To the best of our knowledge, after this paper, the only existing literature on this topic is the paper by Mugnolo and Plumer~\cite{MP}, where    the torsional rigidity of a quantum graph is related to the   rigidity of an associated weighted combinatorial graph. We will interpret here that result with the  (nonlocal) rigidity of a weighted graph.

 Let $\mathcal{G}$ be a compact, finite, connected quantum graph.  Let $V$ be the set of vertices of $\mathcal{G}$ and $E$ be the set of edges. Fora vertex $x \in V$,  let ${\rm deg}_{\mathcal{G}}(x)$ denote is {\it degree}, i.e. the number of edges incident in $x$. We suppose that $\mathcal{G}$ has at least one vertex of degree $1$. Set
 $$V_D:= \{ x \in V  : {\rm deg}_{\mathcal{G}}(x) = 1 \}$$
 and set $V_N:= V \setminus V_D$. We assume that the graph does not contain multiple edges between the same vertices but it can contain  at most one loop at each vertex (we comment on this later on).  Let us call  $\ell_e$ or $\ell_{x,y}$ the length of the edge $e$ that join the vertices $x$ and $y$.

For each $e \in E$ there exists an increasing an bijective function
$$
	\begin{array}{rlcc}
 c_e:&e&\to& [0,\ell_e]\\
 &x&\rightsquigarrow& x_{e},
 \end{array}
$$  $x_{e}$ is called the coordinate of the point $x\in e$.

A function $u$ on a metric graph $\mathcal{G}$ is a collection of functions $[u]_{e}$
defined on
$[0,\ell_{e}]$ for all $e\in E.$	Throughout this work, $ \int_{\mathcal{G}} u(x)  dx$  denotes
$ \sum_{e\in E} \int_{0}^{\ell_{e}} [u]_{e}(x_e)\, dx_e$.

 For $A \subset \mathcal{G}$, the {\it length} of $A$ is defined as
$$\ell(A) = \int_{\mathcal{G}} \1_A dx.$$

  Let $\Delta_{\mathcal{G}}$ the Laplacian on $\mathcal{G}$ with homogeneous Dirichlet boundary condition at vertices in $V_D$ and with the Kirchhoff type condition on the vertices in $V_N$, that is, its associated quadratic form $a_{\mathcal{G}}$ is given by
 $$a_{\mathcal{G}}(u):= \int_{\mathcal{G}} \vert u'(x) \vert^2 dx = \sum_{e \in E} \vert u'(x_e) \vert^2 dx_e$$
 on the domain
 $$H_{\mathcal{G}}(\mathcal{G},V_D):= \left\{ u = (u_e)_{e \in E} \in \bigoplus_{e \in E} H^1(0,l_e)   :   u(v) = 0 \hbox{ for } v \in V_D, \ u \ \hbox{continuos in }    V_N \right\}.$$

  Let $v$ be  the solution of
\begin{equation}\label{eqparaquantumgraph}
\left\{\begin{array}{ll}
-\Delta_\mathcal{G} v(x)=1,&x\in \mathcal{G},\\[10pt]
v(x)=0,&x\in V_D.
\end{array}\right.
\end{equation}
The function~$v$ is called the {\it torsion function} of $\mathcal{G}$, and the {\it (quantum)   torsional rigidity} of~$V_D$ is given by the $L^1$-norm of $v$:
$$T_q(\mathcal{G}):= \int_\mathcal{G} \vert v \vert dx.$$

In \cite{MP} Mugnolo and Plumer show that, if $v$ is the torsion function of $\mathcal{G}$, then  $f=2v_{|_V} : V \rightarrow \R$ is the unique solution of  the following problem:
\begin{equation}\label{torsioneqgraphs}
\left\{\begin{array}{ll}\displaystyle
-\frac{1}{\displaystyle \sum_{y\sim x}\ell_{yx}}\sum_{y\sim x}\frac{1}{\ell_{yx}}\left(f(y)-f(x)\right)=1,& x\in V_N,
    \\ \\
f(x)=0,&x\in V_D.
\end{array}\right.
\end{equation}
 And they prove that
\begin{equation}\label{ftrf01}
T_q(\mathcal{G})=\frac{1}{12}\sum_{e \in E}\ell_e^3+\frac12\sum_{x\in V}\left(\sum_{y\sim x}\ell_{yx} +\ell_{xx} \right)v(x)
.
\end{equation}
Observe that in the above expression, $\displaystyle\sum_{y\sim x}\ell_{yx} +\ell_{xx} =
\sum_{y\sim x,\, y\neq x}\ell_{yx} +2\ell_{xx} $. If we had $k (\ge 2)$ loops at the vertex $x$ with lengths $\ell_{xx}(i)$, $i=1,2,...k$, then we should change $\displaystyle 2\ell_{xx} $ by $ 2(\ell_{xx}(1)+...+\ell_{xx}(k)).$

   Take  $c>0$ large enough such that (we do not mark the dependence on $c$)
\begin{equation}\label{choiceofc} \widetilde{w}_{xx}:= \sum_{y\sim x}c\ell_{yx}-\sum_{y\sim x}\frac{1}{c\ell_{yx}}  > 0   \quad\forall x\in V_N
\end{equation}
Observe that, since  $\mathcal{G}$ is finite, such a $c$ exists.

Let us consider the  weighted  graph $G_c$ having the same vertices and edges than $\mathcal{G}$ with weights (we do not mark the dependence on $c$ in $\widetilde w_{xx}$):
$$\begin{array}{l}w_{yx}=\frac{1}{c\ell_{yx}}\quad \hbox{for }   y\sim x,\ y\neq x,\\ \\
w_{xx}=\frac{1}{c\ell_{xx}}+ c\ell_{xx}+\widetilde{w}_{xx}\quad{if }\ \ell_{xx}\neq 0,
\\ \\
w_{xx}=\widetilde{w}_{xx}\quad{if }\ \ell_{xx}= 0.
\end{array}
$$
On account of~\eqref{choiceofc}, we have that
\begin{equation}\label{onacc01}\sum_{y\sim x}w_{yx}=\sum_{y\sim x}c\ell_{yx}  + c\ell_{xx}.
\end{equation}
And, then,   {   from~\eqref{torsioneqgraphs}, we have that
$f_c:=2c^2v_{|_V}$ satisfies
\begin{equation}\label{torsioneqconc}
\left\{\begin{array}{ll}\displaystyle
 -\frac{1}{\displaystyle \sum_{y\sim x}w_{yx}}\sum_{y\sim x}w_{yx}\left(f_c(y)-f_c(x)\right)=1,& x\in V_N,
    \\ \\
f_c(x)=0,&x\in V_D.
\end{array}\right.
\end{equation}
Observe that, since $V_D=\partial_{m^{G_c}}V_N$, $f_c$ is solution of the problem
$$\left\{\begin{array}{ll}
-\Delta_{m^{G_c}} f_c =1&\hbox{in } \Omega,\\[10pt]
f_c=0&\hbox{on }\partial_{m^{G_c}}\Omega;
\end{array}\right. $$

Then we have that formula~\eqref{ftrf01} given in~\cite{MP} can be written using weighted discrete graphs, seen as random walk spaces, as follows.
\begin{theorem}\label{Charact1}   We have
\begin{equation}\label{iiets01} T_q(\mathcal{G}) =  \frac{1}{12}\sum_{e \in E}\ell_e^3+\frac{1}{4}\frac{1}{c^3}T_{m^{G_c}}(V_N),
\end{equation}
whatever  $c$ is chosen in~\eqref{choiceofc}.
\end{theorem}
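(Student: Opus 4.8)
The plan is to read the Mugnolo--Plumer formula~\eqref{ftrf01} through the lens of the weighted random walk space $[V(G_c),\mathcal{B},m^{G_c},\nu_{G_c}]$ and to identify its ``vertex term'' with $\tfrac{1}{4c^3}\,T_{m^{G_c}}(V_N)$. First I would observe that the torsion function $v$ vanishes on $V_D$ by the boundary condition in~\eqref{eqparaquantumgraph}, so in the sum $\sum_{x\in V}\bigl(\sum_{y\sim x}\ell_{yx}+\ell_{xx}\bigr)v(x)$ appearing in~\eqref{ftrf01} only the vertices of $V_N$ contribute, and the sum may be taken over $V_N$.

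Then I would substitute the two facts already established above. On the one hand, $f_c:=2c^2 v_{|_V}$ solves the homogeneous Dirichlet problem for the $m^{G_c}$-Laplacian on $V_N$ (with $\partial_{m^{G_c}}V_N=V_D$); since $\mathcal{G}$ is finite and connected and has a vertex of degree $1$, the set $V_N$ is $m^{G_c}$-connected, $0<\nu_{G_c}(V_N)<\nu_{G_c}\bigl((V_N)_{m^{G_c}}\bigr)<\infty$, and $V_N$ satisfies a $2$-Poincar\'{e} inequality (Example~\ref{example.graphs}); so by the uniqueness statement of Section~3, $f_c$ coincides with the $m^{G_c}$-stress function $f_{V_N}$, i.e. $v(x)=\tfrac{1}{2c^2}f_{V_N}(x)$ for $x\in V_N$. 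On the other hand,~\eqref{onacc01} gives, for $x\in V_N$, $\sum_{y\sim x}\ell_{yx}+\ell_{xx}=\tfrac1c\,d_x$, where $d_x:=\sum_{y\sim x}w_{yx}$ is the weight of $x$ in $G_c$. Inserting both into the reduced sum,
\[
\frac12\sum_{x\in V}\Bigl(\sum_{y\sim x}\ell_{yx}+\ell_{xx}\Bigr)v(x)=\frac12\sum_{x\in V_N}\frac{d_x}{c}\cdot\frac{f_{V_N}(x)}{2c^2}=\frac{1}{4c^3}\sum_{x\in V_N}f_{V_N}(x)\,d_x=\frac{1}{4c^3}\int_{V_N}f_{V_N}\,d\nu_{G_c}=\frac{1}{4c^3}\,T_{m^{G_c}}(V_N),
\]
where the last equality is the definition of $T_{m^{G_c}}(V_N)$. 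Substituting this back into~\eqref{ftrf01} yields exactly~\eqref{iiets01}, and since $c$ was an arbitrary constant satisfying~\eqref{choiceofc}, the identity holds for every such $c$.

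The argument is essentially a direct substitution, so the only point that needs care is the bookkeeping of loops: in~\eqref{ftrf01} a loop at $x$ enters the factor $\sum_{y\sim x}\ell_{yx}+\ell_{xx}$ with total length $2\ell_{xx}$, and the definition of the weights $w_{xx}$ of $G_c$ is tuned precisely so that~\eqref{onacc01} reproduces this with the common factor $c$; should a vertex carry $k\ge 2$ loops, one replaces $\ell_{xx}$ by the sum of the loop lengths and adjusts the weights of $G_c$ accordingly, as indicated after~\eqref{ftrf01}, after which the same chain of equalities goes through verbatim. Beyond this, there is no real obstacle.
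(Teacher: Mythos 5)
Your proposal is correct and follows essentially the same route as the paper: both proofs identify $f_c=2c^2v_{|_V}$ with the $m^{G_c}$-stress function of $V_N$ (with $\partial_{m^{G_c}}V_N=V_D$), use \eqref{onacc01} to rewrite $T_{m^{G_c}}(V_N)=\sum_{x}d_x f_c(x)=2c^3\sum_{x}\bigl(\sum_{y\sim x}\ell_{yx}+\ell_{xx}\bigr)v(x)$, and substitute into \eqref{ftrf01}; you merely run the substitution in the opposite direction and spell out the uniqueness and loop-bookkeeping details that the paper leaves implicit.
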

\begin{proof}Indeed, from~\eqref{onacc01},
$$\begin{array}{c}
\displaystyle  T_{m^{G_c}}(V_N)=\sum_{x\in V}\left(\sum_{y\sim x}w_{xy}\right)f_c(x)
\\ \\   \displaystyle =2c^2\sum_{x\in V}\left(\displaystyle \sum_{y\sim x}c\ell_{yx} + c\ell_{xx} \right)  v(x)
=2c^3\sum_{x\in V}\left(\displaystyle \sum_{y\sim x}\ell_{yx}  + \ell_{xx} \right)  v (x).
\end{array}$$
And hence the statement~\eqref{iiets01} follows from~\eqref{ftrf01}.  \end{proof}

 As a consequence of the above theorem and \eqref{Polya2}  we recover the equivalent to Proposition 4.8 of~\cite{MP}.
\begin{corollary}
We have, for any  $c>0$ satisfying~\eqref{choiceofc},
\begin{equation}\label{IneqIT} T_q(\mathcal{G})  \geq  \frac{1}{12}\sum_{e \in E}\ell_e^3 +\frac{1}{4}\frac{1}{c^3} \frac{\nu_{G_c}(V_N)^2}{P_{m^{G_c}}(V_N)}.
\end{equation}

\end{corollary}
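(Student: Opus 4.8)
The plan is to combine the identity \eqref{iiets01} of the preceding theorem with the P\'{o}lya-type lower bound \eqref{Polya2}, applied to the random walk space $[V(G_c),\mathcal{B},m^{G_c},\nu_{G_c}]$ and the set $\Omega = V_N$. Concretely, \eqref{iiets01} reads
$$T_q(\mathcal{G}) = \frac{1}{12}\sum_{e \in E}\ell_e^3 + \frac{1}{4}\frac{1}{c^3}\,T_{m^{G_c}}(V_N),$$
so it suffices to bound $T_{m^{G_c}}(V_N)$ from below by $\nu_{G_c}(V_N)^2/P_{m^{G_c}}(V_N)$, which is exactly what \eqref{Polya2} provides; substituting and keeping the (nonnegative) term $\tfrac{1}{12}\sum_e \ell_e^3$ untouched yields \eqref{IneqIT}.

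The one thing that must be checked before invoking \eqref{Polya2} is that $\Omega = V_N$ satisfies the standing hypotheses under which the $m$-torsional rigidity and the inequality \eqref{Polya2} are established, namely that $V_N$ is $m^{G_c}$-connected and $0 < \nu_{G_c}(V_N) < \nu_{G_c}\big((V_N)_{m^{G_c}}\big) < \infty$. First I would observe that, by the very construction of $G_c$, $\partial_{m^{G_c}}V_N = V_D$, hence $(V_N)_{m^{G_c}} = V_N \cup V_D = V$; since $\mathcal{G}$ is finite this is a finite set and $\nu_{G_c}(V)<\infty$. Since $\mathcal{G}$ has at least one vertex of degree $1$ we have $V_D \neq \emptyset$, and because every weight $w_{xx}$, $w_{yx}$ is strictly positive this gives $\nu_{G_c}(V_D)>0$, so $\nu_{G_c}(V_N) < \nu_{G_c}(V)$; similarly $\nu_{G_c}(V_N)>0$ since $V_N$ is nonempty (e.g.\ any vertex adjacent to a degree-$1$ vertex lies in $V_N$, as $\mathcal{G}$ is connected with at least one edge). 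Finally, $m^{G_c}$-connectedness of $V_N$ follows from the connectedness of $\mathcal{G}$: any two non-$\nu_{G_c}$-null subsets $A,B$ of $V_N$ with $A\cup B = V_N$ are joined through edges of $\mathcal{G}$ restricted to $V_N$, so $L_{m^{G_c}}(A,B)>0$; one may also argue via Proposition~\ref{connectedness.iff.Lm} applied to the restricted walk $(m^{G_c})^{V_N}$ on $(V_N,\mathcal{B}_{V_N})$.

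With these verifications in hand the argument is immediate: \eqref{Polya2} gives $T_{m^{G_c}}(V_N) \geq \nu_{G_c}(V_N)^2/P_{m^{G_c}}(V_N)$, and plugging this into \eqref{iiets01} proves \eqref{IneqIT} for every admissible $c$. The main (and only) obstacle is thus the bookkeeping check that $V_N$ is a legitimate ``$\Omega$'' for the theory of Section~3 — in particular its $m^{G_c}$-connectedness — and noting that the choice of $c$ satisfying \eqref{choiceofc} does not affect any of this; everything else is a one-line substitution.
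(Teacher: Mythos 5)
Your proposal is correct and follows exactly the paper's route: the corollary is obtained by substituting the P\'{o}lya-type bound \eqref{Polya2}, applied to $\Omega=V_N$ in the weighted graph $G_c$, into the identity \eqref{iiets01}. The extra verification that $V_N$ fits the standing hypotheses (in particular its $m^{G_c}$-connectedness, which holds since removing the degree-one vertices of a connected graph leaves a connected subgraph and all weights, including the loops forced by \eqref{choiceofc}, are positive) is left implicit in the paper but is a sound and welcome addition.
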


\begin{remark}\rm
\item{ 1.} Observe that if we assume that $\ell_e =1$ for all edge $e$ in $\mathcal{G}$, and we   have not loops,
\begin{equation}\label{MGboundal01}
T_q(\mathcal{G})  \geq  \frac{1}{12} \sharp(E) + \frac{1}{4}\frac{\left( \sum_{x \in V_N} {\rm deg}_{\mathcal{G}}(x) \right)^2}{\sum_{x \in V_N}\sharp( \{ y \in V_D \, : \, y \sim  x \})}   \ge \frac{1}{12} \sharp(E) + \frac{1}{4}\sum_{x \in V_N} {\rm deg}_{\mathcal{G}}(x).\end{equation}
Indeed,
$\nu_{G_c}(V_N) = c \sum_{x \in V_N} {\rm deg}_{\mathcal{G}}(x)$ and
$ P_{m^{G_c}}(V_N) = \frac{1}{c} \sum_{x \in V_N} \sharp( \{ y \in V_D  :  y \sim  x \}).$
Then, the first inequality in~\eqref{MGboundal01} follows from~\eqref{IneqIT},   and the second inequality follows since, for each $x\in V_N$, ${\rm deg}_{\mathcal{G}}(x) \ge \sharp( \{ y \in V_D  :  y \sim  x \})$.

\item{ 2.}  Consider a star metric graph $\mathcal{G}$, with Dirichlet conditions imposed on all vertices except the central one, and with a possible loop in the central vertex. Suppose that there are $k$ Dirichlet vertices with their edges joining the central vertex having  length $\ell_i$, $i=1,2,...,k$, and the possible loop at the central vertex with length $\ell_0\ge 0$ (if $\ell_0=0$ we do not have a loop and we have only a star). Then, on account of Theorem~\ref{Charact1} and Example~\ref{7en01}, for $c$ satisfying~\eqref{choiceofc}, we have that
$$\begin{array}{ll}\displaystyle T_q(\mathcal{G})&\displaystyle =\frac{1}{12}\sum_{i=0}^k\ell_i^3+\frac{1}{4c^3}\frac{\left(2c\ell_0+c\sum_{i=1}^k\ell_i\right)^2}{\sum_{i=1}^k\frac{1}{c\ell_i}}
\\ \\&\displaystyle =\frac{1}{12}\sum_{i=0}^k\ell_i^3+\frac{1}{4}\frac{\left(2\ell_0+\sum_{i=1}^k\ell_i\right)^2}{\sum_{i=1}^k\frac{1}{\ell_i}}.
\end{array}
$$
 The above equality recover, as could not be otherwise, the result of Example~3.10 of~\cite{MP}. We see that in this case that we have equality in~\eqref{IneqIT} (this is also remarked in~\cite[Proposition 4.8]{MP}).
In the particular case that $\ell_i=1$ for $i=1,2,...,k$ and $\ell_0=0$, then
$T_q(\mathcal{G}) = \frac13 k$, and all the inequalities in~\eqref{MGboundal01} are equalities.
$\blacksquare$
\end{remark}

 \

\noindent {\bf Acknowledgments.} The authors have been partially supported   by Conselleria d'Innovaci\'{o}, Universitats, Ci\`{e}ncia y Societat Digital, project AICO/2021/223.

\end{document}